\newcommand{\oset}[3][0ex]{%
  \mathrel{\mathop{#3}\limits^{
    \vbox to#1{\kern-2\ex@
    \hbox{$\scriptstyle#2$}\vss}}}}
\newcommand{\stab}{\mathrm{stab}}
\title{The Torsion of Automorphisms of Nilpotent Spaces}
\author[Sacha Goldman]{Sacha Goldman${}^1$} 
\address{University of Toronto, Department of Mathematics, 40 St. George St., Toronto, Ontario, Canada, M5S 2E4}%
\email{sacha.goldman@mail.utoronto.ca}
\thanks{${}^1$ Supported by an NSERC Canada Graduate Research Scholarship --- Master's and an Ontario Graduate Scholarship}
\begin{document}

\maketitle

\begin{abstract}
We reprise a $K_1$-valued refinement of Whitehead torsion originally studied by Gersten. We use this Gersten torsion to show that for nilpotent spaces with infinite fundamental group, any self-equivalence which acts as the identity on the fundamental group has vanishing Whitehead torsion. We find two applications of our vanishing result. First, we provide many examples of spaces with infinitely many simple structures. Second, we conclude that the group of homotopy classes of simple self-equivalences of a connected nilpotent space that act as the identity on the fundamental group is commensurable to an arithmetic group, building on a theorem of Sullivan. We also give a corrected version of Sullivan's proof as an appendix.  	
\end{abstract}

\tableofcontents

\section*{Introduction}

In 1950, Whitehead \cite{Whitehead1950} gave a complete obstruction to an equivalence between finite CW-complexes being simple. This obstruction is complete in that it is always realized by an equivalence $Y \to X$. In practice, we are often interested in studying self-equivalences $X \to X$, rather than all equivalences. When we restrict from equivalence to self-equivalences, remarkably little is known about the possible values of the Whitehead torsion. 

\begin{maintheorem}\label{MainVanishingThm}
	Suppose that $X$ is a connected based finite nilpotent CW-complex with infinite $\pi_1(X,x)$. If $f \colon X \to X$ is a based homotopy self-equivalence that acts as the identity on $\pi_1(X,x)$, then $f$ is simple.
\end{maintheorem}

Our \cref{MainVanishingThm} is inspired by a result of Mislin \cite[Theorem B]{Mislin1976} stating that a finitely dominated nilpotent CW-complex is always finite. The proof of \cref{MainVanishingThm} will rely on a new interpretation of Gersten's \cite{Gersten1967} $K_1$-valued refinement of the Whitehead torsion. 

We have two applications of \cref{MainVanishingThm}. Our first application is to better understand the difference between the relations of equivalence and simple homotopy equivalence.

\begin{maintheorem}\label{MainStructureThm}
	Suppose $X$ is a connected finite nilpotent CW-complex with infinite Whitehead group, infinite fundamental group, and such that the fundamental group of $X$ only has finitely many automorphisms. Then there are infinitely many finite CW-complexes with homotopy equivalent to $X$, none of which are simple homotopy equivalent.
\end{maintheorem}

In \cref{MainCorollariesSection} we provide some examples of homotopy types satisfying \cref{MainStructureThm}, $S^1 \times L(p,q_1, \dots, q_n)$ for $p \neq 2,3$, $S^1 \times S^3/Q_8$ which is an example with non-abelian fundamental group, and the Lie group $U(n)/\langle \zeta \rangle$ for an $n^\text{th}$ root of unity $\zeta$ and with $n \neq 1,2,3,6$.  For the first example of the product of a circle with a lens space, Nagy--Nicholson--Powell \cite[Theorem A]{Nagy2023} show the stronger statement that there are infinitely many h-cobordant manifolds with this homotopy type which are not simple homotopy equivalent. Combining their techniques with the proof of \cref{MainStructureThm} one can find many more infinite families of h-cobordant manifolds which are not simple homotopy equivalent.

Our other application of \cref{MainVanishingThm} is to study finiteness properties of the group of simple self-equivalences. Bustamante--Krannich--Kupers \cite[Theorem 2.2]{Bustamante2023} prove that this group is commensurable up to finite kernel to an arithmetic when $X$ has finite fundamental group. We analyze the case of $X$ with infinite fundamental group. The combination of our results yields the following theorem. 

\begin{maintheorem}\label{MainArithmeticityThm}
	If $X$ is a connected based finite nilpotent CW-complex, the group of based self-equivalences of $X$ up to based homotopy that fix $\pi_1(X,x)$ and which are simple is commensurable to an arithmetic group.
\end{maintheorem}

A result of Borel--Serre \cite{Borel1973} gives that groups commensurable to arithmetic groups are of finite type, that is, have finitely many cells in every dimension.

\cref{MainArithmeticityThm} is built upon a result of Sullivan \cite[Theorem 6.1]{Sullivan1977} who shows that for a connected based finite nilpotent CW-complex there is a map with finite kernel from the group of self-equivalences into a suitable algebraic analogue, and the image is an arithmetic group. Sullivan's proof of their result is incomplete as given. Triantafillou \cite{Triantafillou1999} points out that Sullivan's obstruction theory fails to account for a basepoint, and in turn their obstruction theory does not account for the basepoint of the rationalization. To ensure that our results have a correct foundation, in \cref{Appendix} we give a detailed proof of Sullivan's result. In \cref{Appendix} we also address the incongruence between Sullivan's \cite{Sullivan1977} notion of commensurability and the usual one.  

\begin{acknowledgements}
The author thanks Oscar Randal-Williams for an enlightening conversation on interpreting Gersten's refined torsion via $A$-theory (\cref{GerstenTorsionDefinition}). Thanks are also due to Fadi Mezher for pointing out the work of Roitberg \cite{Roitberg1984} and for verifying that \cite[Theorem E]{Mezher2024} applies to nilpotent spaces, and to Mark Powell for comments on an early draft. Finally, the author is grateful to their supervisor, Alexander Kupers, for their mentorship and conversations throughout the course of this project.
\end{acknowledgements} 

\section{Simple Homotopy Theory}

\subsection{Preliminaries}

Although the results in the introduction are stated for topological spaces, we will work with the $\infty$-category of spaces $\Spc$. As is common when transitioning to $\infty$-categories, some structures become properties. The structure of a topological space being finitely dominated, corresponds to the space being a compact object in $\Spc$. For an $\infty$-category $\mathcal C$ we write $\mathcal C^\omega$ for the full subcategory of compact objects. So, $\Spc^\omega $ is the category of spaces which can be finitely dominated. A finite space is a space equivalent to a finite colimit of the terminal object $*$. Although finite CW-complexes clearly correspond to finite spaces, we will see that the notion of a finite space is not enough to do simple homotopy theory, for this we will need a finite space with a finiteness structure (a notion which we introduce in this section). For our purposes, this gives the information we need to translate results about topological spaces into results about spaces, and thus we need not discuss traditional topological spaces again.

We must fix some notation in addition to what is already detailed above. $\Sp$ will be the $\infty$-category of spectra. If $X$ is a space, we will write $\mathcal C^X$ for the category of functors $\Fun(X, \mathcal C)$ where we regard $X$ as an $\infty$-groupoid. We call this the category of ($\mathcal C$-valued) local systems on $X$. With  few exceptions $\mathcal C$ will be $\Sp$. For a map of spaces $f \colon Y \to X$ we write $f^*$ for the pullback $\mathcal C^X \to \mathcal C^Y$. When $\mathcal C$ has colimits we write $f_!$ for the left adjoint to $f^*$. We remark that if $\mathcal C$ also has limits then $f_!$ preserves compact objects because it has a cocontinuous right adjoint.

The $A$-theory of a space $X$, $A(X)$, is the $K$-theory of local systems, $K((\Sp^X)^\omega)$. $A$-theory is given functoriality using $f_!$, as defined above. The assembly map is the natural transformation of functors $$\alpha_{-} \colon A(*) \otimes \Sigma^\infty_+ - \to A(-)$$ to the $A$-theory functor from its excisive approximation. The cofibre of $\alpha_X$ is the Whitehead spectrum $\Wh(X)$. We will write $A_n(X)$ or $\Wh_n(X)$ for the $n^{\mathrm{th}}$ homotopy group of the respective spectra. For more background on our approach to $A$-theory, see \cite{Lurie2014}.

If $X$ is a space then $\hAut(X)$ will be the $\infty$-group of self-equivalences of $X$. If $X$ is based, then the subscripts $*$ and $\pi_1$ will denote that those equivalences that fix the basepoint and the fundamental group respectively. When $X$ has a finiteness structure (\cref{FinitenessStr}), adding a superscript $s$ will denote those self-equivalences which are simple.

\subsection{Whitehead Torsion}

To begin, we recall the following $A$-theoretic definition of Whitehead torsion (see \cite{Lurie2014}). 

\begin{definition}\label{FinitenessStr}
	Suppose that $X$ is an object in $\Spc^\omega$. A \emph{finiteness structure} $\varphi_X$ for $X$ is a lift of the $K$-theory class of the constant local system $[\S_X]$ along the assembly map. That is, a finiteness structure is an explicit choice of equivalence $\alpha_X(\varphi_X) \simeq [\S_X]$. 
\end{definition}

For a finiteness structure to exist it must be true that $X$ is a finite space. The image of $[\S_X]$ in the cofibre of assembly $\Wh(X)$ is Wall's finiteness obstruction and so we can informally say that a finiteness structure is a guarantee that Wall's obstruction vanishes.

\begin{definition}
	Suppose that $f \colon Y \to X$ is an equivalence of spaces with finiteness structures denoted $\varphi_Y$ and $\varphi_X$ respectively. Then let $\varepsilon $ be the counit for the adjunction $f_! \dashv f^*$. Then $$f_! \S_Y \simeq f_! f^* \S_X \xrightarrow{\varepsilon} \S_X$$ gives a path in $A(X)$ with lifts of the ends along $\alpha_X$ (given by the finiteness structures). In other words we get a loop in $\Wh(X)$. The element $\tau(f) \in \Wh_1(X)$ represented by this loop is the \emph{Whitehead torsion} of $f$. If $\tau(f)$ vanishes we say that $f$ is \emph{simple}.
\end{definition}

To see how this relates to the usual definition of Whitehead torsion see \cite[Lec. 27, Prop 5]{Lurie2014}.

\subsection{Gersten Torsion}

We can now define Gersten's refinement of the Whitehead torsion \cite{Gersten1967}. In their original paper, Gersten claims that their construction works for any self-equivalence $f \colon X \to X$ of a compact space, but they only define it for $X$ a based connected space and for $f$ that acts as the identity on $\pi_1(X,x)$. As Ranicki \cite[\S 5]{Ranicki1987} points out, Gersten's definition cannot be directly generalized to all self-equivalences of compact spaces. Our definition works for any self-equivalence but it does not always refine the Whitehead torsion.

\begin{definition}\label{GerstenTorsionDefinition}
	Suppose that $X$ is a compact space. Let $\tau \colon X \to X_1$ be the map from $X$ to its $1$-truncation. Suppose that $f \colon X \to X$ is a self-equivalence and let $f_1 \colon X_1 \to X_1$ be the $1$-truncation of $f$. Then we can construct a self-equivalence $$\tau_! \S_X \xleftarrow{\varepsilon_{f_1}} {f_1}_! \tau_! \S_X \simeq \tau_! f_! \S_X \xrightarrow{\tau_! \varepsilon_f} \tau_! \S_X.$$ This gives a loop in $A(X_1)$. The element $\tau_G(f) \in A_1(X_1) \oset[-0.075cm]{\hspace{0.05cm}\cong}{\xleftarrow{\hspace{0.3cm}}} A_1(X)$ represented by this loop is the \emph{Gersten torsion} of $f$. 
\end{definition}

We momentarily delay the justification of the isomorphism  $A_1(X_1) \oset[-0.075cm]{\hspace{0.05cm}\cong}{\xleftarrow{\hspace{0.3cm}}} A_1(X)$ to make the following remark.

\begin{remark}
	In the case where $X$ is a connected based compact space and $f$ is based and induces the identity on $\pi_1(X,x)$, \cref{GerstenTorsionDefinition} can be simplified because then there is a contractible choice of based equivalence $f_! \simeq \id_{X_1}$. Indeed, since $\Hom_{\Spc^{/*}}(X_1,X_1)$ is an $E_1$-space, all of its path components are the same. Then $\pi_k(\Hom_{\Spc^{/*}}(X_1,X_1), *)$ vanishes using the tensor--hom adjunction. Thus the Gersten torsion is the element of $A_1(X_1)$ is associated to $$\tau_!\S_X \simeq \tau_! f_! \S_X \xrightarrow{\tau_! \varepsilon_f} \tau_!\S_X.$$ This is the scenario in which the Gersten torsion is a lift of the Whitehead torsion, and so going forward we work only in this simplified case. Our original definition is useful as it makes it clear that $\tau_G$ depends on $f$ only up to equivalence (rather than based equivalence).   
\end{remark}

\begin{proposition}\label{ConnectivityProposition}
	For a space $X$, the map $\tau \colon X \to X_1$ induces an isomorphism on $A_1$ and $\Wh_1$. 
\end{proposition}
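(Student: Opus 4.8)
The plan is to reduce the statement to a $K$-theoretic fact about connective ring spectra. A compact local system on $X$ is supported on finitely many components, so $A$ sends disjoint unions to direct sums and the $1$-truncation is formed componentwise; hence we may assume $X$ is connected with basepoint $x$, and set $\pi = \pi_1(X,x)$, so that $X_1 \simeq B\pi$ and $\Omega X_1 \simeq \pi$ (a discrete grouplike $\mathbb{E}_1$-space). Under the standard identification of $(\Sp^X)^\omega$ with perfect modules over the spherical group ring $\mathbb{S}[\Omega X] = \Sigma^\infty_+\Omega X$ (see \cite{Lurie2014}), the functor $\tau_!$ becomes extension of scalars along the ring map $\mathbb{S}[\Omega X] \to \mathbb{S}[\Omega X_1] = \mathbb{S}[\pi]$ induced by the $0$-truncation $\Omega X \to \pi_0 \Omega X$. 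This is a map of connective $\mathbb{E}_1$-ring spectra which is an isomorphism on $\pi_0$ (both are $\mathbb{Z}[\pi]$) and a surjection on $\pi_1$; equivalently, it has $1$-connected cofibre.

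The key input is that such a map $R \to R'$ of connective $\mathbb{E}_1$-ring spectra induces an isomorphism on $\pi_0$ and $\pi_1$ (and a surjection on $\pi_2$) of $K$-theory. I would prove this from the group-completion description $K(R) \simeq K_0(R) \times (B\mathrm{GL}_\infty R)^+$: each map $\mathrm{GL}_n R \to \mathrm{GL}_n R'$ is $1$-connected, being the isomorphism $\mathrm{GL}_n(\pi_0 R) \cong \mathrm{GL}_n(\pi_0 R')$ on $\pi_0$ and the surjection $M_n(\pi_1 R) \twoheadrightarrow M_n(\pi_1 R')$ on $\pi_1$, so $B\mathrm{GL}_\infty R \to B\mathrm{GL}_\infty R'$ is $2$-connected; since the fundamental groups and their (elementary) perfect subgroups are carried isomorphically, $2$-connectedness survives the plus construction. (Alternatively, invoke the Dundas--Goodwillie--McCarthy theorem together with the corresponding connectivity bound on relative $\mathrm{TC}$.) Applied to $\mathbb{S}[\Omega X] \to \mathbb{S}[\pi]$, this shows $\tau$ induces an isomorphism on $A_1$; it likewise does so on $A_0$, where both sides are the classical group $K_0(\mathbb{Z}\pi)$.

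For $\Wh_1$ I would compare the cofibre sequences $A(*)\otimes\Sigma^\infty_+ X \to A(X) \to \Wh(X)$ along $\tau$. Since $\tau$ is $2$-connected we have $H_i(X_1,X) = 0$ for $i \le 2$, so $\Sigma^\infty(X_1/X) \simeq \mathrm{cofib}(\Sigma^\infty_+\tau)$ is $2$-connected; smashing with the connective spectrum $A(*)$ preserves this, so $A(*)\otimes\Sigma^\infty_+X \to A(*)\otimes\Sigma^\infty_+X_1$ is an isomorphism on $\pi_i$ for $i \le 1$. Combining this with the isomorphisms on $A_0$ and $A_1$ in the map of long exact homotopy sequences, the five lemma gives that $\Wh_1(X) \to \Wh_1(X_1)$ is an isomorphism.

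The main obstacle is the $K$-theoretic statement of the second paragraph: the reduction to a ring map and the connectivity bookkeeping are routine, but it takes some care to run the plus-construction comparison — or to extract the statement from trace methods — at the level of $\mathbb{E}_1$-ring spectra rather than ordinary rings, and to check that the relevant constructions do not lose connectivity.
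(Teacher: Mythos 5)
Your argument follows the paper's route essentially exactly: reduce to connected $X$, use Schwede--Shipley to identify $A(X) \simeq K(\Sigma^\infty_+\Omega X)$, apply the connectivity estimate for $K$-theory of a $1$-connected map of connective ring spectra, and compare the assembly cofibre sequences for $\Wh_1$. The only deviation is that the paper simply cites Waldhausen \cite[Proposition 1.1]{Waldhausen1978} as a black box for the $K$-theoretic connectivity estimate that you propose to re-derive via the plus construction or trace methods --- citing Waldhausen is cleaner and sidesteps the plus-construction bookkeeping you rightly flag as the delicate point.
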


\begin{proof}
If $X$ is connected, we have that the Schwede--Shipley theorem (originally proved in \cite{Schwede2003}, but for our context \cite[Theorem 7.1.2.1]{Lurie2017}) gives an equivalence between local systems of spectra on $X$ and modules of the spherical group ring $\Sigma^{\infty}_+ \Omega X$ $$\Sp^X \simeq  \mathrm{Mod}_{\Sigma^{\infty}_+ \Omega X}$$ which means $A(X) \simeq K(\Sigma^{\infty}_+ \Omega X)$. Since the map $\tau$ was $2$-connected the map on spherical group rings is $1$-connected and thus the map $$K(\Sigma^{\infty}_+ \Omega X) \to K(\Sigma^{\infty}_+ \Omega X_1)$$
is again $2$-connected \cite[Proposition 1.1]{Waldhausen1978}. The $A$-theory of a disjoint union is the sum of $A$-theory spectra for each component, thus for any $X$ (possible not connected) the map $A(X) \to A(X_1)$ is $2$-connected. This gives that the map on $A_1$ is an equivalence.

Clearly the map $$\tau \colon A(*) \otimes \Sigma_+^\infty X \to A(*) \otimes \Sigma_+^\infty X_1$$ is $2$-connected and so the map on the cofibre of assembly $ \Wh (X) \to \Wh(X_1)$ is $2$-connected. This gives that the map on $\Wh_1$ is an equivalence. 
\end{proof}

\begin{remark}
We take a moment to outline how our \cref{GerstenTorsionDefinition} compares with Gersten's original refinement of the Whitehead torsion \cite{Gersten1967}. We never use Gersten's original formulation.

Suppose that $X$ is a connected based compact space and that $f \colon X \to X$ is a based equivalence inducing the identity on $\pi_1(X,x)$. Gersten's element of $K_1$ is the one associated to the map of $\Z[\pi_1(X,x)]$-chain complexes $$\tilde f \colon C(\tilde X; \Z) \to C(\tilde X; \Z)$$
induced by $f$ on the chains of the universal cover $\tilde X$ of $X$. In the classical definition of the Whitehead torsion of $Y \to X$, the map is used to view $C(\tilde Y,\Z)$ as a $\Z[\pi_1(X,x)]$-chain complex. For the map $\tilde f$ to be a map of $\Z[\pi_1(X,x)]$-chain complexes, without first using $f$ to push forward the domain chain complex, is where Gersten requires the assumption that $f$ acts as the identity on $\pi_1(X,x)$. 

Now in order to obtain Gersten's definition from ours we can push forward from local systems of spectra to a local systems of modules over $H\Z$  $$(\Sp^{X_1})^\omega \to (\mathrm{Mod}^{X_1}_{H\Z})^\omega.$$
This is equivalent, again via the Schwede--Shipley theorem \cite[Theorem 7.1.2.1]{Lurie2017}, to the map on modules given by the linearization map on $E_1$-rings
$$\Sigma^\infty_+ \Omega X_1 \to H\Z\otimes \Sigma^\infty_+ \Omega X_1 \simeq H\Z[\pi_1(X,x)].$$
This map of $E_1$-rings is $1$-connected and so the induced map on $K$-theory is $2$-connected \cite[Proposition 1.1]{Waldhausen1978}. Under this equivalence $\tau_! \S_X \otimes H\Z$ is equivalent to $C(\tilde X, \Z)$ and the map $\tau_! \varepsilon$ is exactly the map $\tilde f$. 
\end{remark}

Finally we show that our Gersten torsion is a lift of the Whitehead torsion.

\begin{proposition}\label{GerstenWhiteheadComparison}
	Suppose that $X$ is a connected based finite space. Suppose that $X$ is given some finiteness structure $\varphi_X$. Let $f \colon X \to X$ be an equivalence that acts as the identity on $\pi_1(X,x)$. Then the image of $\tau_G(f)$ in $\Wh_1(X_1) \oset[-0.075cm]{\hspace{0.05cm}\cong}{\xleftarrow{\hspace{0.3cm}}} \Wh_1(X)$ is $\tau(f)$.
\end{proposition}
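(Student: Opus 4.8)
The plan is to trace both torsion invariants through the same construction and show they agree after mapping to $\Wh_1$. Recall that $\tau_G(f) \in A_1(X_1)$ is represented by the loop $\tau_! \S_X \simeq \tau_! f_! \S_X \xrightarrow{\tau_! \varepsilon_f} \tau_! \S_X$, while $\tau(f) \in \Wh_1(X)$ is represented by the loop $f_! \S_X \simeq f_! f^* \S_X \xrightarrow{\varepsilon} \S_X$ viewed in the cofibre of assembly using the finiteness structure $\varphi_X$ at both endpoints. The key observation is that the Whitehead torsion loop in $\Wh(X)$ is the image of the $A(X)$-path $f_! f^* \S_X \xrightarrow{\varepsilon} \S_X$ under the quotient $A(X) \to \Wh(X)$, where the two endpoints are identified via $\varphi_X$; and since $f$ acts as the identity on $\pi_1$, the simplification from the preceding remark gives a based equivalence $f_! \simeq \id_{X_1}$, so that $\tau_! f_! \simeq \tau_!$ compatibly with $\tau_! \varepsilon_f$ being the $\tau$-pushforward of the counit $\varepsilon$. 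Thus the Gersten loop in $A(X_1)$ is literally $\tau_!$ applied to the $A(X)$-path underlying $\tau(f)$.

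First I would set up the commuting square relating assembly on $X$ and on $X_1$: naturality of the assembly transformation $\alpha$ gives a commutative square whose vertical maps are $\tau$-pushforward, $A(*) \otimes \Sigma^\infty_+ X \to A(X)$ over $A(*) \otimes \Sigma^\infty_+ X_1 \to A(X_1)$. Taking cofibres vertically, and using that $\tau \colon X \to X_1$ is $2$-connected so that $\Sigma^\infty_+ X \to \Sigma^\infty_+ X_1$ is $2$-connected, I get an induced map $\Wh(X) \to \Wh(X_1)$ which by \cref{ConnectivityProposition} is an isomorphism on $\Wh_1$ (and on $A_1$). Second, I would observe that the finiteness structure $\varphi_X$ — a lift of $[\S_X]$ along $\alpha_X$ — pushes forward to a lift of $\tau_! [\S_X] = [\S_{X_1}]$ along $\alpha_{X_1}$, i.e. a finiteness structure $\tau_! \varphi_X$ on $X_1$; this is exactly the datum needed to read the Whitehead torsion of $f_1$ on $X_1$, and by construction the image of the Gersten loop under $A(X_1) \to \Wh(X_1)$, with endpoints identified via $\tau_! \varphi_X$, is the loop representing $\tau(f_1) \in \Wh_1(X_1)$.

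Third, I would identify $\tau(f_1)$ with the image of $\tau(f)$ under the iso $\Wh_1(X) \to \Wh_1(X_1)$: this follows because the Whitehead torsion loop for $f$ on $X$ maps, under the $\tau$-pushforward $A(X) \to A(X_1)$ and $\Wh(X)\to\Wh(X_1)$, to the Whitehead torsion loop for $f_1$ on $X_1$ with the pushed-forward finiteness structure — functoriality of $A$-theory and of $\alpha$, together with $\tau_!$ of the counit $\varepsilon$ being (homotopic to) the counit for $f_1$, make this square commute on the nose up to coherent homotopy. Stringing these together: $\tau_G(f) \in A_1(X_1)$ maps to the Gersten-loop-mod-assembly $= \tau(f_1) \in \Wh_1(X_1)$, which equals the image of $\tau(f) \in \Wh_1(X)$ under the \cref{ConnectivityProposition} isomorphism, which is precisely the claim.

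I expect the main obstacle to be bookkeeping the basepoints and the coherence of the simplification $f_! \simeq \id_{X_1}$: one must check that the chosen null-homotopy making $f_{1!} \simeq \id$ is compatible with the identification of endpoints coming from the finiteness structures, so that no spurious element of $A_1$ or $\pi_1 \hAut$ is introduced — this is exactly the kind of basepoint subtlety flagged in the introduction regarding Sullivan's argument. Concretely, the care needed is in verifying that the square $$\begin{array}{ccc} \tau_! f_! \S_X & \xrightarrow{\ \tau_! \varepsilon_f\ } & \tau_! \S_X \\ \big\| & & \big\| \\ \tau_! \S_X & = & \tau_! \S_X \end{array}$$ where the left vertical uses $f_! \simeq \id$, commutes coherently with the analogous identifications used to define $\tau(f)$; once this is in place, the rest is formal naturality of assembly together with \cref{ConnectivityProposition}.
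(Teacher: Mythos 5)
Your high-level strategy matches the paper's: push $\varphi_X$ forward along $\tau$, observe that the resulting loop in $\Wh(X_1)$ based via $\tau_!\varphi_X$ is simultaneously the image of $\tau(f)$ (by naturality of assembly and $\tau f \simeq \tau$) and a change-of-basepoint of $\tau_G(f)$. However, there is a concrete error in your second step that should be repaired.

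You assert $\tau_![\S_X] = [\S_{X_1}]$ and on that basis call $\tau_!\varphi_X$ a finiteness structure on $X_1$ and speak of a Whitehead torsion $\tau(f_1)$. This is false in general: the stalk of $\tau_!\S_X$ at the basepoint of $X_1$ is $\Sigma^\infty_+$ of the fibre of $X \to X_1$, i.e.\ of the $2$-connective cover $X\langle 2\rangle$, and this is the constant sphere only when $X$ is already a $1$-type. Moreover $X_1 = B\pi_1(X)$ is typically not a compact object in $\Spc$, so it admits no finiteness structure in the sense of \cref{FinitenessStr}, and there is no well-defined Whitehead torsion of $f_1$. Introducing ``$\tau(f_1)$'' as a middleman is therefore misleading, even though the loop you have in mind is the right one. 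The fix is straightforward and is what the paper does: $\tau_!\varphi_X$ is merely a lift of $\tau_![\S_X]$ along $\alpha_{X_1}$ (which is all that naturality of $\alpha$ gives you), and all you need is the loop in $\Wh(X_1)$ obtained by closing up the path $\tau_!f_!\S_X \xrightarrow{\tau_!\varepsilon}\tau_!\S_X$ with this lift at both ends. Compare that loop once against the pushforward of $\tau(f)$ (naturality of assembly together with the homotopy $\tau f \simeq \tau$ identifying the two endpoint lifts $\tau f\varphi_X \simeq \tau\varphi_X$) and once against $\tau_G(f)$ (translation of basepoint from $[\tau_!\S_X]$ to $\alpha_{X_1}(\tau\varphi_X)$), and you are done. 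Your closing paragraph correctly identifies the coherence check needed between the null-homotopy $f_{1!}\simeq\id$ and the endpoint identifications; that part of your write-up is sound.
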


\begin{proof}
	Pushing forward the finiteness structure $\varphi_X$ for $X$ along $\tau$ gives a lift $\tau \varphi_X$ of $\tau_! [\S_X]$ along the assembly map $\alpha_{X_1}$. This gives us lifts along assembly for the endpoints of the path in $A(X_1)$ defined by the equivalence 
	$$ \tau_!f_! \S_X \xrightarrow{\tau_! \varepsilon} \tau_! \S_X.$$
	The resulting loop in $\Wh_1(X)$ is exactly the Whitehead torsion $\tau(f)$ under the equivalence $\Wh_1(X_1) \oset[-0.075cm]{\hspace{0.05cm}\cong}{\xleftarrow{\hspace{0.3cm}}} \Wh_1(X)$ induced by $\tau$ (\cref{ConnectivityProposition}). Now consider the equivalence $\tau f \simeq \tau$. This gives us an equivalence of lifts between $\tau f \varphi_X$ lifting $\tau_! f_! [\S_X]$ and $\tau \varphi_X$ lifting $\tau_! [\S_X]$. So the Whitehead torsion is also given by the loop resulting from using $\tau \varphi_X$ as a lift of the endpoints of the path in $A(X_1)$ defined by $$\tau_! \S_X \xrightarrow{\tau_! \varepsilon} \tau_! \S_X.$$	
	But now this loop is just the image of the Gersten torsion $\tau_G(f)$ moved from the basepoint $[\S_X]$ to the basepoint $\alpha_{X_1}(\tau \varphi_X)$ using the equivalence $\alpha_{X_1}(\tau \varphi_X) \simeq [\S_X]$.  
\end{proof}

\section{Main Results}

\subsection{Vanishing Theorem}

Our vanishing theorem will use the following swindle, which is a consequence of the additivity theorem for $K$-theory. 

\begin{proposition}\label{Swindle}
	Suppose that $A$ is an object in some pointed $\infty$-category $\mathcal C$ with finite colimits. Suppose that $f$ and $g$ are two commuting self-equivalences of an object $A$ in $\mathcal C$. Let $h$ be the self-equivalence induced by $g$ on $\mathrm{cofib} f$. The element $[h]$ of $K_1 (\mathcal C)$ given by $h$ is $0$.
\end{proposition}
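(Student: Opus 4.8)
The plan is to exhibit $[h]$ as a difference of two visibly equal classes, via the additivity theorem applied to the cofibre sequence $A \xrightarrow{f} A \to \mathrm{cofib}(f)$. Regard this cofibre sequence as a single object of the $\infty$-category $\Fun(\Delta^1,\mathcal C)$ of arrows in $\mathcal C$ (equivalently, of the Waldhausen $\infty$-category $S_2\mathcal C$ of two-step filtered objects), whose ``source'' is $A$, whose ``target'' is $A$, and whose ``cofibre'' is $\mathrm{cofib}(f)$. The first step is to promote the pair $(g,g)$ to a self-equivalence of this object. In the $\infty$-categorical setting the hypothesis that $f$ and $g$ commute is precisely the datum of a homotopy $gf \simeq fg$, i.e.\ of a filled square, and this is exactly what is needed to make $(g,g)$ a morphism of the arrow $f$; it is an equivalence because equivalences in $\Fun(\Delta^1,\mathcal C)$ are detected on the two vertices, where the map in question is $g$. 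By construction $h$ is the self-equivalence it induces on the cofibre. Hence $(g,g,h)$ is a self-equivalence of an object of $\Fun(\Delta^1,\mathcal C)$ and so represents a class $x \in K_1(\Fun(\Delta^1,\mathcal C))$.

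The second step is to feed $x$ into the additivity theorem. Additivity says that the tautological cofibre sequence of functors $\mathrm{source} \to \mathrm{target} \to \mathrm{cofibre}$ from $\Fun(\Delta^1,\mathcal C)$ to $\mathcal C$ induces, on $K$-theory, the relation $[\mathrm{target}] = [\mathrm{source}] + [\mathrm{cofibre}]$; on $\pi_1$ this gives $(\mathrm{target})_*(x) = (\mathrm{source})_*(x) + (\mathrm{cofibre})_*(x)$ in $K_1(\mathcal C)$ for every $x$. For our class, the source and target of the arrow $f\colon A \to A$ are both $A$, and $(g,g,h)$ restricts to $g$ on each of them, while the cofibre contributes $[h]$. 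So the relation reads $[g] = [g] + [h]$, and since $K_1(\mathcal C)$ is an abelian group we may cancel to get $[h]=0$.

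There is no real analytic or combinatorial obstacle here: the whole content is the additivity theorem, which I would cite rather than reprove. For a pointed $\infty$-category with finite colimits, equipped with the Waldhausen structure in which every morphism is a cofibration, additivity is standard. The only thing that deserves care is the coherence bookkeeping in the first step --- being explicit that ``$f$ and $g$ commute'' furnishes exactly the homotopy $gf \simeq fg$ used to build the self-equivalence $(g,g)$ of the arrow $f$, with no further choices entering, and that the resulting square is invertible because equivalences in a functor $\infty$-category are detected pointwise. I would also remark that the argument never uses that $f$ is an equivalence, only that $g$ is one; it applies verbatim with $f$ an arbitrary morphism of $\mathcal C$, which is the form in which it will be used.
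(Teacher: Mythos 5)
Your proof is correct and matches the paper's argument: the same cofibre sequence regarded as the object $f\colon A\to A$ of $\Fun(\Delta^1,\mathcal C)$, the same self-equivalence $(g,g)$ assembled from the commuting homotopy $gf\simeq fg$ (the paper's ``$(g\colon A\to A)$'' is a slip for this arrow $f$, as its subsequent equation $[g]=[g]+[h]$ makes clear), and the same invocation of additivity for $\mathrm{source}\to\mathrm{target}\to\mathrm{cofibre}$. Your closing observation that only $g$, not $f$, needs to be an equivalence is worth keeping: the proposition as stated requires both to be equivalences, yet in the paper's own application within the vanishing proposition the role of $f$ is played by a map of the form $\tau_!i_!(\varepsilon_m-\varepsilon_{\id})$, whose cofibre is $\tau_!\S_X\neq 0$, so it is not invertible; the weaker hypothesis you isolate is in fact the one being used.
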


The assumptions on $\mathcal C$ are to ensure that $K(\mathcal C)$ can be defined using the $S_\bullet$-construction (originally due to \cite{Waldhausen1985} but in this context see \cite[Lec. 16]{Lurie2014}).

\begin{proof}
Choosing an equivalence $f g \simeq gf$ gives a self-equivalence of the object $(g \colon A \to A)$ in $\Fun(\Delta^1, \mathcal C)$ and thus determines a loop in $K(\Fun(\Delta^1, \mathcal C))$. The additivity theorem for $K$-theory \cite[Lec. 17, Theorem 1]{Lurie2014} says that the map $K(\Fun(\Delta^1, \mathcal C)) \to K(\mathcal C)$ given by taking the target is the sum of the maps given by taking the source and cofibre (for the original proof and the comparison of different versions of the additivity theorem, see \cite[\S 1.3-1.4]{Waldhausen1985}). Thus we immediately obtain $[g] = [g] + [h]$ meaning that $[h] =0$.
\end{proof}

The proof of our vanishing result will proceed by fibring over the circle, a long tradition for vanishing results in simple homotopy theory initiated by Mather \cite{Mather1965}.

\begin{proposition}\label{VanishingProposition}
	Suppose that we have a based map of connected spaces $p \colon X \to S^1$ which is surjective on the fundamental group. Suppose that the fibre $Y$ and $X$ are compact. Then the torsion of any based self-equivalence $f \colon X \to X$ that acts as the identity on $\pi_1(X,x)$ vanishes.
\end{proposition}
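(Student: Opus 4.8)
The plan is to exhibit $X$ as a homotopy mapping torus and then apply the swindle of \cref{Swindle}. Since $f$ acts as the identity on $\pi_1(X,x)$, it acts as the identity on $H^1(X;\Z) = [X,S^1]$, so $p \circ f \simeq p$ as based maps; choosing such a homotopy promotes $f$ to a self-equivalence of $X$ over $S^1$. Pulling back along the universal cover $\R \to S^1$ yields the infinite cyclic cover $\bar X := X \times_{S^1} \R$, equipped with a generating deck transformation $t \colon \bar X \to \bar X$ and an identification of $X$ with the homotopy quotient $\bar X_{h\Z}$, i.e.\ the homotopy coequalizer of $\id$ and $t$. As $p$ is surjective on $\pi_1$ the cover $\bar X$ is connected, and as $\R$ is contractible there is an equivalence $\bar X \simeq Y$; in particular $\bar X$ is compact. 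The promotion of $f$ transports along the pullback to a based, coherently $\Z$-equivariant self-equivalence $\bar f \colon \bar X \to \bar X$; in particular $\bar f$ commutes with $t$.

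Next I would push forward to $X_1 = B\pi_1(X,x)$, where the Gersten torsion lives. Writing $\bar\tau \colon \bar X \to X_1$ for the composite $\bar X \to X \xrightarrow{\tau} X_1$, set $\mathcal Y := \bar\tau_!\S_{\bar X} \in \Sp^{X_1}$. The crucial point is that $\mathcal Y$ is a \emph{compact} object of $\Sp^{X_1}$: the map $\bar\tau$ factors through $B\pi_1(\bar X)$, over which $\S_{\bar X}$ pushes forward to $C_*(\widetilde{\bar X};\S)$, which is perfect over $\Sigma^\infty_+\pi_1(\bar X)$ because $\bar X$ is compact; and pushforward along the map of connected spaces $B\pi_1(\bar X) \to X_1$ is extension of scalars along a map of $E_1$-rings, which carries perfect modules to perfect modules. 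Applying the colimit-preserving functor $\tau_!$ to the coequalizer presentation of $X$ produces a cofibre sequence $\mathcal Y \xrightarrow{\,1 - t_*\,} \mathcal Y \to \tau_!\S_X$ in $\Sp^{X_1}$; and under this identification the self-equivalence of $\tau_!\S_X$ representing $\tau_G(f)$ is the one induced on $\operatorname{cofib}(1 - t_*)$ by $\bar f_*$, which commutes with $1 - t_*$.

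Now I would apply \cref{Swindle} in the stable $\infty$-category $(\Sp^{X_1})^\omega$, with object $\mathcal Y$, with $1 - t_*$ the self-map whose cofibre is formed, and with $\bar f_*$ the commuting self-equivalence; the proof of \cref{Swindle} uses invertibility only of the self-equivalence one conjugates by, so it applies even though $1 - t_*$ is not itself an equivalence. The conclusion is that the class of the induced self-equivalence in $K_1\big((\Sp^{X_1})^\omega\big) = A_1(X_1) \cong A_1(X)$ vanishes, so $\tau_G(f) = 0$. Since $X$ carries a finiteness structure, \cref{GerstenWhiteheadComparison} then gives $\tau(f) = 0$, i.e.\ $f$ is simple.

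The step I expect to be the main obstacle is producing the cofibre sequence in the second paragraph together with the coherent identification of the self-equivalence representing $\tau_G(f)$ with the map $\bar f_*$ induces on $\operatorname{cofib}(1-t_*)$: one must convert ``$X$ is the mapping torus of $t$'' and ``$\bar f$ is $\Z$-equivariant'' into an honest diagram of compact objects of $\Sp^{X_1}$ of the shape the additivity theorem demands, while keeping track of basepoints. Verifying that $\mathcal Y$ lands in $(\Sp^{X_1})^\omega$ rather than merely in $\Sp^{X_1}$ is the other load-bearing point, though it is comparatively routine.
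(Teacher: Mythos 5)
Your proposal is correct and follows essentially the same route as the paper: both fiber over $S^1$, present $X$ as a homotopy coequalizer, and feed the resulting square into the additivity swindle in $(\Sp^{X_1})^\omega$. The main cosmetic difference is that you pass to the infinite cyclic cover $\bar X = X\times_{S^1}\R$ with its deck transformation $t$, whereas the paper works directly with the fibre $Y$ and its monodromy $m$, encoding the gluing data by regarding the coequalizer diagram in $\Spc_{/X}$; since $\bar X\simeq Y$ and $t$ is identified with the monodromy, these are the same construction. Two small remarks. First, your compactness argument for $\mathcal Y = \bar\tau_!\S_{\bar X}$ (factoring through $B\pi_1(\bar X)$ and invoking perfectness of the chains on the universal cover) is sound but unnecessarily indirect: $\S_{\bar X}$ is already compact in $\Sp^{\bar X}$ because $\bar X$ is a compact space, and $\bar\tau_!$ preserves compacts because its right adjoint $\bar\tau^*$ is cocontinuous; this is how the paper tracks compactness, by passing through $\Spc^\omega_{/X}\to(\Spc^X)^\omega\to(\Sp^X)^\omega$. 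Second, your observation that the swindle only needs the \emph{conjugating} map to be invertible (not the map whose cofibre is taken) is an accurate and genuinely load-bearing reading: as literally stated, \cref{Swindle} requires both $f$ and $g$ to be equivalences, but in the application the map $\tau_!i_!(\varepsilon_m-\varepsilon_{\id})$ (your $1-t_*$) is not an equivalence. The proof of \cref{Swindle} in the paper does go through in the needed generality, so there is no gap in substance, but you have correctly identified that the stated hypotheses of \cref{Swindle} are more restrictive than its proof or its use.
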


\begin{proof}
	The map $f$ commutes with the projection to $S^1$ and so we get an induced map on fibres $g \colon Y \to Y$. Let $i$ be the inclusion of the fibre $Y \to X$ and let $m \colon Y \to Y$ be the monodromy map given by going around the circle. We have that $X$ is built as the coequalizer
\[\begin{tikzcd}[cramped]
	Y & Y
	\arrow["m", shift left, from=1-1, to=1-2]
	\arrow["\id"', shift right, from=1-1, to=1-2]
\end{tikzcd}.\]Now we have a homotopy $im \simeq i$ given by going around $S^1$. This means we can view this coequalizer diagram in the category $\Spc_{/X}$ of spaces over $X$, where the objects are $i \colon Y \to X$. Then the coequalizer is exactly $\id \colon X \to X$. Now before the map $g$ was a map $Y \to Y$, but if we view $Y$ as a space over $X$ then $g$ is a map from $ig \colon Y \to X$ to $i \colon Y \to X$ because the diagram
\[\begin{tikzcd}
	Y && Y \\
	& X
	\arrow["g", from=1-1, to=1-3]
	\arrow["ig"', from=1-1, to=2-2]
	\arrow["i", from=1-3, to=2-2]
\end{tikzcd}\]
commutes. In this way, $g$ gives a map of coequalizer diagrams in $\Spc_{/X}$ because $g$ commutes with $m$ and $\id$. On the domain of this map of coequalizer diagrams are the spaces over $X$ given by $ig \colon Y \to X$ and on the codomain are the spaces over $X$ given by $i \colon Y \to X$. A map of coequalizer diagrams over $X$ requires us to provide $3$-morphism, and in this case one is guaranteed by horn filling. The map induced on the coequalizers is then $f$ from $f \colon X \to X$ to $\id \colon X \to X$. 

Since $X$ and $Y$ are compact this map of coequalizer diagrams is in the category $\Spc_{/X}^\omega$. We can pass the entire map of coequalizer diagrams through the composition $$\Spc_{/X}^\omega \to (\Spc^X)^\omega \to (\Sp^X)^\omega$$
where the first functor is Lurie's straightening functor \cite[Theorem 2.2.1.2]{Lurie2009} and the second is $\Sigma^\infty_+$ applied to the target. This composition sends an object, for example $i \colon Y \to X$, to $i_!i^* \S_Y \simeq i_! \S_Y$ and sends a morphism, for example $g$ from $ig \colon Y \to X$, to $i \colon Y \to X$ to $i_!\varepsilon_g$ where $\varepsilon_g$ is the counit for the adjunction $g_! \dashv g^*$. This composition also preserves colimits so our coequalizers remain except, because we are now in a stable category, they become cofibre of differences. Thus, we obtain the diagram
\[\begin{tikzcd}[column sep=4em]
	{i_!g_!\S_Y} & {i_!g_!\S_Y} & {f_!\S_X} \\
	{i_!\S_Y} & {i_!\S_Y} & {\S_X}
	\arrow["{i_!g_!(\varepsilon_m - \varepsilon_\id)}", from=1-1, to=1-2]
	\arrow["{i_! \varepsilon_g}"', from=1-1, to=2-1]
	\arrow[from=1-2, to=1-3]
	\arrow["{i_! \varepsilon_g}"', from=1-2, to=2-2]
	\arrow["{\varepsilon_f}"', from=1-3, to=2-3]
	\arrow["{i_!(\varepsilon_m - \varepsilon_\id)}"', from=2-1, to=2-2]
	\arrow[from=2-2, to=2-3]
\end{tikzcd}\]
where the horizontal sequences are exact. Let $\tau$ be the truncation map from $X \to X_1$. We now apply $\tau_!$ to this diagram. $\tau _! f_! \simeq \tau_!$ because $f$ induces the identity on the fundamental group. Similarly, $\tau_! i_! g_! \simeq i_! \tau_! g_! \simeq i_! \tau_! \simeq \tau_! i_!$ (where we have abused notation and also allowed $\tau$ to denote the map $Y \to Y_1$). This gives the commutative diagram 
\[\begin{tikzcd}[column sep=4em]
	{\tau_!i_!\S_Y} & {\tau_!i_!\S_Y} & {\tau_!f_!\S_X} \\
	{\tau_!i_!\S_Y} & {\tau_!i_!\S_Y} & {\tau_!\S_X}
	\arrow["{\tau_! i_!(\varepsilon_m - \varepsilon_\id)}", from=1-1, to=1-2]
	\arrow["{\tau_! i_! \varepsilon_g}"', from=1-1, to=2-1]
	\arrow[from=1-2, to=1-3]
	\arrow["{\tau_! i_! \varepsilon_g}"', from=1-2, to=2-2]
	\arrow["{\tau_! \varepsilon_f}"', from=1-3, to=2-3]
	\arrow["{\tau_! i_!(\varepsilon_m - \varepsilon_\id)}"', from=2-1, to=2-2]
	\arrow[from=2-2, to=2-3]
\end{tikzcd}\]
in $(\Sp^{X_1})^\omega$. Applying $K$-theory, the right-hand map becomes the loop which gives the $\tau_G(f)$, and applying \cref{Swindle} to the left-hand square this vanishes. 
\end{proof}

This proposition is the main piece of input we need for \cref{MainVanishingThm}. We will also need a lemma. 

\begin{lemma}\label{NilpotentGroupSurjectivity}
	If $G$ is an infinite finitely generated nilpotent group then there is a surjective map $G \to \Z$.
\end{lemma}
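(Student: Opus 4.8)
The plan is to reduce to the abelianization of $G$. Since $G$ is finitely generated and nilpotent, its abelianization $G^{\mathrm{ab}} = G/[G,G]$ is a finitely generated abelian group, and any surjection $G^{\mathrm{ab}} \to \Z$ pulls back to a surjection $G \to \Z$. So it suffices to show that $G^{\mathrm{ab}}$ is infinite, because a finitely generated abelian group $A$ surjects onto $\Z$ if and only if it is infinite (equivalently, has positive rank), which is immediate from the classification $A \cong \Z^r \oplus T$ with $T$ finite: if $r \geq 1$ project to a $\Z$ factor, and if $r = 0$ then $A$ is finite.

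The main content is therefore the claim: \emph{a finitely generated nilpotent group with finite abelianization is itself finite.} First I would set up the lower central series $G = \gamma_1 \supseteq \gamma_2 \supseteq \cdots \supseteq \gamma_{c+1} = 1$, where $\gamma_{i+1} = [\gamma_i, G]$. Each successive quotient $\gamma_i/\gamma_{i+1}$ is a finitely generated abelian group (finite generation of the quotients follows from finite generation of $G$ by a standard argument: $\gamma_2/\gamma_3$ is generated by commutators of generators of $G$, and inductively $\gamma_{i+1}/\gamma_{i+2}$ is generated by the images of brackets $[g, x]$ for $g$ ranging over generators of $\gamma_i/\gamma_{i+1}$ and $x$ over generators of $G$). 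Then I would argue by induction down the central series that each $\gamma_i/\gamma_{i+1}$ is finite, which forces $G$ to be finite as an iterated extension of finite groups.

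The key mechanism for the induction is that the bracket induces a surjection $(\gamma_i/\gamma_{i+1}) \otimes_{\Z} (G/\gamma_2) \twoheadrightarrow \gamma_{i+1}/\gamma_{i+2}$; this is the standard fact that commutators are bilinear modulo higher terms of the lower central series. The base case is the hypothesis: $G/\gamma_2 = G^{\mathrm{ab}}$ is finite. For the inductive step, if $\gamma_i/\gamma_{i+1}$ is finite then the tensor product above is finite (tensor of a finite abelian group with anything is finite — it is a quotient of finitely many copies of that finite group), hence its quotient $\gamma_{i+1}/\gamma_{i+2}$ is finite. This completes the induction, and since the series has finite length $c$ and trivial bottom term, $G$ is finite, contradicting the hypothesis that $G$ is infinite. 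Hence $G^{\mathrm{ab}}$ is infinite and we are done.

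The step requiring the most care is the bilinearity of the commutator map modulo the next term of the lower central series — one must verify the commutator identities $[xy, z] = [x,z]^y [y,z]$ and $[x, yz] = [x,z][x,y]^z$ reduce, modulo $\gamma_{i+2}$, to show that $(\bar g, \bar x) \mapsto \overline{[g,x]}$ is well-defined and $\Z$-bilinear on $(\gamma_i/\gamma_{i+1}) \times (G/\gamma_2)$, using that conjugation acts trivially on $\gamma_{i+1}/\gamma_{i+2}$. This is entirely standard nilpotent group theory (it appears in any treatment of the lower central series, e.g.\ in the theory of Lie rings associated to groups), so I would cite it rather than reprove it, but it is the one nontrivial ingredient; everything else is bookkeeping with finite generation and the structure theorem for finitely generated abelian groups.
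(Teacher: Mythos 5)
Your proof is correct, and it takes a genuinely different route from the paper's. The paper first quotients by the finite normal torsion subgroup to reduce to the torsion-free case, then invokes Mal'cev's theorem to embed $G$ into the unitriangular integer matrices $\mathrm{Uni}_n(\Z)$, filters by off-diagonals, and observes that the image of $G$ in the first graded piece it meets nontrivially is a nonzero subgroup of some $\Z^k$, hence free abelian of positive rank, giving the surjection onto $\Z$. You instead reduce to showing $G^{\mathrm{ab}}$ is infinite and prove the contrapositive — a finitely generated nilpotent group with finite abelianization is finite — by the standard inductive argument using the surjections $(\gamma_i/\gamma_{i+1}) \otimes_{\Z} (G/\gamma_2) \twoheadrightarrow \gamma_{i+1}/\gamma_{i+2}$ coming from commutator bilinearity on the lower central series. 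Your approach is more self-contained (it avoids Mal'cev's embedding theorem, which is itself nontrivial) and makes explicit the clean structural fact that $G$ is infinite iff $G^{\mathrm{ab}}$ is; the paper's approach is shorter once Mal'cev is granted and sidesteps having to discuss the associated graded Lie ring. Both are standard and both are fine; yours is arguably the more elementary of the two.
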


\begin{proof}
	Since $G$ is finitely generated nilpotent, it has a finite normal torsion subgroup $T$ \cite[Corollary 1.A.10]{Segal1983}. Now we have a surjective map from $G \to G/T$. $G/T$ now satisfies all the assumptions of the original group and it is also torsion free, so we can assume that our original group $G$ was torsion free.
	
	We know that $G$ embeds into the group of $n \times n$ unitriangular matrices with integer coefficients \cite{Malcev1949}. Let $\mathrm{Uni}_n^m(\Z)$ be the subgroup of unitriangular matrices where all entries in the $1^\text{st}$ to $m^\text{th}$ off diagonals are zero. So $m$ is between $0$ and $n-1$ with $\mathrm{Uni}_n^0(\Z)$ being all unitriangular matrices and $\mathrm{Uni}_n^n(\Z)$ is the trivial group. This defines a filtration on all unitriangular matrices with $m^\text{th}$ graded piece $\Z^{n - m}$. Because $G$ is non-zero, it has non-trivial image in one of these graded pieces. The image is always a product of infinite cyclic groups, so in particular $G$ surjects to $\Z$.
\end{proof}

\begin{theorem}[\cref{MainVanishingThm}]\label{VanishingTheorem}
	Let $X$ be a connected based finite nilpotent space with infinite fundamental group. Then the Whitehead torsion of any based self-equivalence $f \colon X \to X$ that acts as the identity on $\pi_1(X,x)$ vanishes.
\end{theorem}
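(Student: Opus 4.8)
The plan is to realise $X$ as the total space of a fibration over $S^1$ with compact fibre and then to invoke \cref{VanishingProposition}. Since $X$ is a finite space, $G := \pi_1(X,x)$ is finitely generated, and since $X$ is nilpotent $G$ is nilpotent; it is infinite by hypothesis, so \cref{NilpotentGroupSurjectivity} gives a surjection $\phi \colon G \to \Z$. As $\Z$ is abelian, $\phi$ factors through $H_1(X;\Z)$, hence is realised by a based map $p \colon X \to S^1$ (namely $X \to BG \xrightarrow{B\phi} B\Z = S^1$), which is surjective on fundamental groups. Let $Y$ be the homotopy fibre of $p$. The long exact sequence (with $\pi_2 S^1 = 0$) shows that $Y$ is connected, that $\pi_1(Y) \cong K := \ker\phi$, and that $\pi_n(Y) \cong \pi_n(X)$ for $n \ge 2$, the $K$-actions being the restrictions of the $G$-actions; since subgroups and restrictions of nilpotent groups and nilpotent actions remain nilpotent, $Y$ is again a nilpotent space. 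Granting that $Y$ is compact, \cref{VanishingProposition} applies to $p$ and immediately gives $\tau(f) = 0$ for every based self-equivalence $f$ of $X$ acting as the identity on $\pi_1(X,x)$, which is the assertion.

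So the whole content is the claim that $Y \in \Spc^\omega$, i.e.\ that $Y$ is finitely dominated; this is the one step (besides \cref{NilpotentGroupSurjectivity}) where nilpotence is used, and it is the same phenomenon underlying Mislin's theorem. I would argue as follows. A subgroup of a finitely generated nilpotent group is polycyclic, so $K$ is finitely presented and $\Z[K]$ is Noetherian (P.\ Hall). The universal cover $\tilde X$ of $X$ is also the universal cover of $Y$; choosing a finite CW model of $X$, it is a finite-dimensional free $G$-CW complex, so $H_n(\tilde X;\Z)$ is finitely generated over the Noetherian ring $\Z[G]$, and since $X$ is nilpotent $G$ acts nilpotently on it; hence the augmentation-ideal filtration exhibits $H_n(\tilde X;\Z)$ as a finite iterated extension of finitely generated abelian groups, so it is finitely generated over $\Z$. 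Consequently $C_*(\tilde X;\Z)$, viewed through the free cellular $K$-action, is a bounded complex of free $\Z[K]$-modules whose homology is finitely generated over $\Z[K]$, and over a Noetherian ring such a complex is perfect, i.e.\ $\Z[K]$-chain homotopy equivalent to a bounded complex of finitely generated projectives. Since $K = \pi_1(Y)$ is finitely presented, Wall's finiteness criterion then yields that $Y$ is finitely dominated. (Mislin's theorem even upgrades this to $Y$ being finite, though only compactness is needed here.)

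Assembling the pieces: $p \colon X \to S^1$ is surjective on $\pi_1$ and both $X$ and its homotopy fibre $Y$ are compact, so \cref{VanishingProposition} shows the Whitehead torsion of every based self-equivalence of $X$ fixing $\pi_1(X,x)$ vanishes. I expect the finite-domination of the fibre to be the only genuine obstacle: for non-nilpotent $X$ the fibre need not be finitely dominated --- for instance the infinite cyclic cover of $S^1 \vee S^1$ is an infinite wedge of circles --- so the argument must really use that $K$ is polycyclic and that $\tilde X$ has finitely generated homology.
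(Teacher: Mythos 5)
Your proof follows the same route as the paper's: produce a surjection $\pi_1(X)\to\Z$ via \cref{NilpotentGroupSurjectivity}, realise it by a map $p\colon X\to S^1$, check that the fibre $Y$ is compact, and feed this into \cref{VanishingProposition}. The one genuine difference is how the compactness of $Y$ is obtained. The paper simply cites Mislin \cite[Theorem A, Lemma 1.1]{Mislin1976}, while you unwind the underlying argument (polycyclicity of $K=\ker\phi$ and hence Noetherianity of $\Z[K]$; nilpotent $\pi_1$-action on $H_*(\tilde X;\Z)$ forcing these groups to be finitely generated over $\Z$; the resulting bounded $\Z[K]$-chain complex of free modules with finitely generated homology being perfect over a Noetherian ring; Wall's finiteness criterion). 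This is indeed the mechanism behind Mislin's theorem, so the two proofs are the same in substance; your version is more self-contained but the Noetherian/perfect-complex step is stated somewhat tersely and would need the standard truncation argument (à la Wall/Brown) spelled out to stand on its own.

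One small imprecision worth flagging: \cref{VanishingProposition} is proved for the \emph{Gersten} torsion $\tau_G(f)$ (compactness of $X$ does not by itself define a Whitehead torsion, since no finiteness structure is assumed there), so after invoking it you should conclude $\tau_G(f)=0$ and then pass to $\tau(f)=0$ via \cref{GerstenWhiteheadComparison}, exactly as the paper does. This is a one-line fix and does not affect the validity of the argument.

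Two minor further nits: you should work with a CW model for the fibre, namely the infinite cyclic cover $X_K$ of $X$ (homotopy equivalent to the homotopy fibre $Y$), so that $\tilde X$ is literally its universal cover and the cellular chain complex makes sense; and the fact that a nilpotent space has nilpotent $\pi_1$-action on $H_*(\tilde X;\Z)$, rather than merely on $\pi_*(\tilde X)$, is a theorem (Hilton--Mislin--Roitberg) that deserves an explicit citation.
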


\begin{proof}
	Using \cref{NilpotentGroupSurjectivity}, there is a map from $\pi_1(X)$ to $\Z$. Then we have the map $X \to B\pi_1(X) \to B \Z \simeq S^1$. Then, by work of Mislin \cite[Theorem A, Lemma 1.1]{Mislin1976}, the fibre of this map is always a compact object. Thus by \cref{VanishingProposition}, $\tau_G(f) = 0$ and so by \cref{GerstenWhiteheadComparison} $\tau(f)$ vanishes.
\end{proof}

\begin{remark}
	The assumption in \cref{VanishingTheorem} that $f$ acts the identity on the fundamental group of $X$ is indeed required. Ferry \cite{Ferry1981} defines an injective map
	\begin{align*}
		\widetilde K_0(G) &\to \Wh(\Z \times G)\\
		w(X) &\mapsto \tau(S^1 \times X \xrightarrow{(-1, \id_X)} S^1 \times X)
	\end{align*}
	where $w(X)$ is the Wall finiteness obstruction of $X$, and to make sense of the torsion on $S^1 \times X$ some finiteness structure is placed on $S^1 \times X$.  Mislin \cite[Theorem 4.1]{Mislin1975} shows there exist compact nilpotent spaces $X$ such that $w(X)$ does not vanish and thus we can construct a self-equivalence of $S^1 \times X$ which is not simple.
\end{remark}

\subsection{Moduli Theorem}\label{MainCorollariesSection}

We can now show that the moduli space of simple structures can often have infinitely many components. By a simple structure on a space $X$ we mean a finiteness structure modulo equivalence by simple equivalences.

\begin{corollary}[\cref{MainStructureThm}]\label{StructureCorollary}
	Let $X$ be a connected finite nilpotent space. Suppose that $\Wh_1(X)$ is infinite and that the fundamental group of $X$ is infinite with only finitely many automorphisms. Then there are infinitely many distinct simple structures on $X$.
\end{corollary}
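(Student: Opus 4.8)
The plan is to deduce the moduli statement from the vanishing theorem (\cref{VanishingTheorem}) together with the structure of the set of all finiteness structures. First I would recall that the set of finiteness structures on a fixed finite space $X$, up to the action of $\hAut(X)$, is the quotient relevant to counting simple structures; more precisely, two finiteness structures $\varphi_0, \varphi_1$ differ by an element of $\Wh_1(X)$ (the fibre of the assembly map over $[\S_X]$ is a torsor-like object whose $\pi_0$ receives $\Wh_1(X)$), and the group $\hAut(X)$ acts on this set, with two finiteness structures giving the same simple structure exactly when they lie in the same orbit after quotienting by simple self-equivalences. So the number of simple structures is the cardinality of $\Wh_1(X) / \hAut(X)$ where $\hAut(X)$ acts through its effect on $\Wh_1(X)$.

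The key computation is then to bound the image of $\hAut(X) \to \Aut(\Wh_1(X))$. An element $f \in \hAut(X)$ acts on $\Wh_1(X)$, but its action is controlled by its effect on $\pi_1(X,x)$ (since $\Wh_1(X) \cong \Wh_1(X_1)$ by \cref{ConnectivityProposition}, and $X_1 = B\pi_1(X,x)$, so $\Wh_1$ depends only on $\pi_1$ as a functor of groups). More importantly, \cref{VanishingTheorem} tells us that any $f$ acting as the identity on $\pi_1(X,x)$ is itself simple, hence acts trivially on the set of simple structures. Combined with the hypothesis that $\pi_1(X)$ has only finitely many automorphisms, this means the subgroup of $\hAut(X)$ that can possibly act nontrivially on simple structures is finite — it factors through $\mathrm{Aut}(\pi_1(X,x))$, together with whatever finite ambiguity arises from the kernel of $\hAut(X) \to \Aut(\pi_1)$ acting trivially on simple structures by the vanishing theorem. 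Therefore the orbits of $\hAut(X)$ on $\Wh_1(X)$ have size at most $|\Aut(\pi_1(X,x))| < \infty$, so if $\Wh_1(X)$ is infinite there must be infinitely many orbits, hence infinitely many simple structures.

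More carefully, I would argue: let $N \le \hAut(X)$ be the subgroup of self-equivalences inducing the identity on $\pi_1(X,x)$. By \cref{VanishingTheorem} every element of $N$ is simple, so $N$ acts trivially on the set of simple structures (a simple self-equivalence composed with a finiteness structure gives an equivalent simple structure by definition). The quotient $\hAut(X)/N$ injects into $\Aut(\pi_1(X,x))$, which is finite by hypothesis. Hence $\hAut(X)$ acts on simple structures through a finite quotient, so every orbit is finite of size at most $|\Aut(\pi_1(X,x))|$. Since the set of finiteness structures modulo the action of simple self-equivalences surjects onto a set on which $\Wh_1(X)$ acts transitively-up-to-$N$... — here I would be careful to say that the set of finiteness structures carries a free transitive action of $\Wh_1(X)$ at the level of $\pi_0$ (after choosing a basepoint), simple structures are the quotient by the simple self-equivalences, which act through the finite group $\hAut(X)/N$, hence simple structures are $\Wh_1(X)$ modulo a finite group action, which is infinite when $\Wh_1(X)$ is. Finally, infinitely many simple structures translates back to infinitely many finite CW-complexes homotopy equivalent to $X$ but pairwise not simple homotopy equivalent, which is the statement of \cref{MainStructureThm}.

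The main obstacle I anticipate is making the bookkeeping precise: carefully identifying the set of simple structures with a quotient of $\Wh_1(X)$ by the image of $\hAut(X)$, and verifying that this image is exactly (or at least boundedly related to) the image of $\hAut(X) \to \Aut(\pi_1(X,x))$. This requires knowing that the $\hAut(X)$-action on the set of finiteness structures is compatible with the $\Wh_1(X)$-torsor structure via the natural $\hAut(X)$-action on $\Wh_1(X)$ — essentially a naturality statement for the assembly map and Whitehead spectrum — and then invoking \cref{VanishingTheorem} to kill the contribution of $N$. None of the individual steps is deep given the vanishing theorem; the care is entirely in the torsor/orbit formalism, which I would set up explicitly (perhaps phrasing it as: $\pi_0$ of the moduli space of finiteness structures is a $\Wh_1(X)$-torsor, and the moduli of simple structures is its quotient by $\pi_0 \hAut(X)$ acting through the finite group $\mathrm{im}(\hAut(X) \to \Aut \pi_1(X,x))$).
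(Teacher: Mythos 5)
Your overall strategy is the same as the paper's: identify the set of simple structures on $X$ with a quotient of $\Wh_1(X)$, use \cref{VanishingTheorem} to show that the relevant action factors through the finite group $\Aut(\pi_1(X,x))$, and conclude there are infinitely many orbits. However, your identification of the set of simple structures is mis-stated in a way that matters. You describe simple structures as the quotient of the $\Wh_1(X)$-torsor by the action of the \emph{simple} self-equivalences (with, implicitly, a linear action). The correct statement, due to Cockcroft--Moss \cite{Cockcroft1972} and used by the paper, is that simple structures are the orbits of $\Wh_1(X)$ under an \emph{affine} action of the \emph{entire} group $\pi_0\hAut_*(X)$: the class $f$ acts by $\tau(\psi) \mapsto f_!^{-1}\tau(\psi) + \tau(f)$. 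The non-simple self-equivalences contribute the shift term $\tau(f)$, and that term is exactly where the vanishing theorem is needed --- it guarantees that $\tau(-)$ restricted to $\pi_0\hAut_*(X)$ vanishes on the kernel of $\pi_0\hAut_*(X) \to \Aut(\pi_1(X,x))$, so the whole affine action factors through the finite group $\Aut(\pi_1(X,x))$. Quotienting only by the simple self-equivalences produces a potentially \emph{larger} set (a quotient by a subgroup, hence more orbits), and showing that set is infinite does not a priori bound the true number of simple structures. Your conclusion survives because, once the affine action is set up, both $f_!$ and $\tau(f)$ depend only on the image of $f$ in $\Aut(\pi_1(X,x))$, which is precisely the paper's argument; but the torsor/orbit bookkeeping you flagged as the ``main obstacle'' is genuinely where the affine structure has to enter, and your sketch currently gets it wrong.
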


\begin{proof}
	Pick any basepoint for $X$. Cockcroft--Moss \cite[Theorem 1]{Cockcroft1972} show that simple structures on $X$ are in bijection with the orbits of $\Wh_1(X)$ under a certain right action of $\pi_0\hAut_*(X)$. Give $X$ some finiteness structure. Then suppose that $\psi \colon Y \to X$ is an equivalence between spaces where $Y$ has a finiteness structure. Then $f \in \pi_0\hAut_*(X)$ acts by sending $\tau(\psi)$ to $$f_!^{-1} (\tau(\psi)) + \tau(f).$$
	In our scenario, we can see that this action factors through the map $\pi_0\hAut_*(X) \to \Aut(\pi_1(X,x))$. This is obvious for the term $f_!^{-1} (\tau(\psi))$. For the term $\tau(f)$, the map given by the Whitehead torsion $$\tau \colon \pi_0 \hAut_*(X) \to \Wh_1(X)$$ vanishes on the kernel of the map $\pi_0\hAut_*(X) \to \Aut(\pi_1(X,x))$ by \cref{VanishingTheorem} and thus factors over $\Aut(\pi_1(X,x))$.
	
	So, the simple structures on $X$ are in bijection with the orbits of the infinite set $\Wh_1(X)$ under the action of the finite group $\Aut(\pi_1(X,x))$. This shows that there are infinitely many simple structures on $X$.
\end{proof}

We will now present examples of spaces which satisfy the assumptions of \cref{StructureCorollary}. We will need to verify that $\Wh_1$ is infinite and for this we will use the Bass--Heller--Swan decomposition \cite[Chapter VII]{Bass1968}
$$\Wh(\Z \times G) \iso \Wh(G) \times \tilde K_0(\Z[G]) \times \widetilde \Nil_1(\Z[G]) \times \widetilde \Nil_1(\Z[G]).$$
Here $\Wh(G)$ is the Whitehead group of a group $G$, equivalent to $\Wh_1(X_1)$ in our notation. An important group satisfying the conditions on the fundamental group in \cref{StructureCorollary} is $\Z \times \Z/n$. We can classify exactly when $\Wh(\Z \times \Z/n)$ is infinite.

\begin{proposition}\label{WhiteheadGroupCalc}
	The group $\Wh(\Z \times \Z/n)$ is infinite if and only if $n \neq 1,2,3,6$	
\end{proposition}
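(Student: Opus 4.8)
The plan is to analyze $\Wh(\Z \times \Z/n)$ via the Bass--Heller--Swan decomposition
$$\Wh(\Z \times \Z/n) \iso \Wh(\Z/n) \times \tilde K_0(\Z[\Z/n]) \times \widetilde\Nil_1(\Z[\Z/n])^{\times 2},$$
and to show that the whole right-hand side is finite exactly when $n \in \{1,2,3,6\}$. First I would dispose of the $\Nil$ terms: since $\Z[\Z/n]$ is (for every $n$) a $\Z$-order in the semisimple ring $\Q[\Z/n]$, hence Noetherian of finite Krull dimension and with well-behaved localizations, the Bass nilpotent $K$-theory $\widetilde\Nil_1(\Z[\Z/n])$ is always a torsion group, and in fact I would invoke the classical fact (Bass, and for cyclic groups sharper results of Harmon/Weibel) that $\widetilde\Nil_1(\Z[\Z/n])$ vanishes when $\Z[\Z/n]$ is a regular ring and is otherwise at worst a finite group; either way it contributes nothing to infiniteness. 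So the question reduces to when $\Wh(\Z/n) \times \tilde K_0(\Z[\Z/n])$ is infinite.

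Next I would handle $\tilde K_0(\Z[\Z/n])$ and $\Wh(\Z/n)$ using classical computations for finite cyclic (more generally finite abelian) groups. The reduced projective class group $\tilde K_0(\Z[\Z/n])$ is a \emph{finite} group for every finite group --- this is a theorem of Swan --- so it never contributes infiniteness. Thus the entire problem collapses to: \emph{when is $\Wh(\Z/n)$ infinite?} Here I would use the classical rank formula. For a finite abelian group $G$, $\Wh(G)$ is a finitely generated abelian group whose rank equals $r - q$, where $r$ is the number of irreducible $\R$-representations of $G$ and $q$ is the number of irreducible $\Q$-representations (equivalently, $\Wh(G)$ has rank equal to the number of irreducible rational representations of real-but-not-rational type, i.e. those with complex Schur index behaviour giving a rank-one unit group contribution). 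For $G = \Z/n$, the irreducible $\Q$-representations correspond to divisors $d \mid n$ (the $d$-th cyclotomic field $\Q(\zeta_d)$), and such a summand contributes to the rank of $\Wh(\Z/n)$ precisely when the unit group of $\Z[\zeta_d]$ is infinite, i.e. when $\Q(\zeta_d)$ has rank $\tfrac{\varphi(d)}{2} - 1 > 0$ by Dirichlet's unit theorem, since the relevant units are $\pm$ the cyclotomic units modulo roots of unity. So $\Wh(\Z/n)$ is infinite iff there exists $d \mid n$ with $\tfrac{\varphi(d)}{2} - 1 \geq 1$, i.e. with $\varphi(d) \geq 4$... wait, more carefully: the $d=1,2$ fields $\Q, \Q$ contribute nothing; $d$ with $\varphi(d) = 2$, namely $d \in \{3,4,6\}$, give $\Q(\zeta_d)$ imaginary quadratic with \emph{finite} unit group; and $d$ with $\varphi(d) \geq 3$, hence $\varphi(d) \geq 4$ actually (since $\varphi$ is even for $d>2$), so $d \in \{5,8,10,12,\dots\}$, give infinite unit groups. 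Hence $\Wh(\Z/n)$ is infinite iff $n$ has a divisor $d$ with $\varphi(d) \geq 4$, which fails exactly for $n \in \{1,2,3,4,6,12\}$...

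\emph{Correction in assembling the divisor analysis:} I must check all $d \mid n$, so $\Wh(\Z/n)$ is finite iff \emph{every} divisor $d$ of $n$ satisfies $\varphi(d) \leq 2$, i.e. $d \in \{1,2,3,4,6\}$. The $n$ all of whose divisors lie in $\{1,2,3,4,6\}$ are $n \in \{1,2,3,4,6,12\}$. But the proposition claims the exceptional set is $\{1,2,3,6\}$, excluding $4$ and $12$ --- so for $n = 4$ and $n = 12$ the group $\Wh(\Z/n)$ must in fact be finite while $\Wh(\Z \times \Z/n)$ is infinite, which would have to come from a $\Nil$ or $\tilde K_0$ contribution; but those are finite. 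This tension tells me the rank-contributing condition is subtler than "$\varphi(d)\le 2$": the correct statement is that a $d$-summand contributes to the rank iff $d$ is \emph{not} of the form making $\Q(\zeta_d)$ real or totally imaginary quadratic, and one must track whether $\zeta_d$ of the relevant type, i.e. $d \notin \{1,2,3,4,6\}$ but the case $d=4$ ($\Q(i)$) and $d=3,6$ ($\Q(\sqrt{-3})$) having finite units is exactly why $4$ and $12$ survive only via needing a divisor $d$ with $\varphi(d)>2$; since $12$'s divisors are $1,2,3,4,6$, my formula gives $\Wh(\Z/12)$ finite. \textbf{The main obstacle}, then, is reconciling this with the claimed answer: I expect the resolution is that $\Wh(\Z\times\Z/n)$ can be infinite even when $\Wh(\Z/n)$ is finite, and the honest route is to reconsider --- in fact I believe the true fact is that $\Wh(\Z/n)$ itself \emph{is} already infinite for $n \in\{4,12\}$ because the relevant cyclotomic fields to track for $\Z/12$ include checking that my enumeration "$\varphi(d) \le 2 \Rightarrow$ no contribution" is wrong precisely at composite $d$ where the rational representation is $\Q(\zeta_d)$ with $\varphi(12)=4$... but $12 \nmid 12$'s proper divisor list — $12 \mid 12$! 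So $d=12$ is a divisor of $n=12$, $\varphi(12)=4 \geq 4$, giving an infinite contribution. I conflated "proper divisor" with "divisor." So the clean statement is: $\Wh(\Z/n)$ is infinite iff $n$ itself or some divisor $d$ has $\varphi(d) \geq 4$, i.e. iff $n \notin \{1,2,3,4,6\}$; combined with the BHS $\tilde K_0(\Z[\Z/4])$ being possibly infinite — no, it is finite. So I would finally pin down $n=4$: one computes $\Wh(\Z \times \Z/4)$ is infinite via $\widetilde\Nil_1$ or $\tilde K_0$ being... finite. Hence the genuine remaining case-check, and the real work of the proof, is the single value $n = 4$, where I would directly cite the known computation that $\Wh(\Z \times \Z/4) = \Wh(D_\infty \rtimes \cdots)$-type group is infinite (e.g. via $\tilde K_0(\Z[\Z/4]) = 0$ but the $\Nil$ groups $\widetilde\Nil_1(\Z[\Z/4])$ being infinite — indeed $\Z[\Z/4]$ is non-regular and its $\Nil$ groups are known to be infinitely generated), together with the straightforward verification that for $n \in \{1,2,3,6\}$ all four BHS factors vanish or are finite. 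I would organize the writeup as: (1) reduce via BHS; (2) cite Swan for finiteness of $\tilde K_0$; (3) handle $\Wh(\Z/n)$ by the cyclotomic unit rank computation, getting finiteness precisely for $n \in \{1,2,3,4,6\}$; (4) treat the $\Nil$ terms, observing they vanish iff $\Z[\Z/n]$ is regular, which holds iff $n$ is squarefree, so $n=4$ (non-squarefree) is the unique value where $\Wh(\Z/n)$ is finite but a $\Nil$ term is infinite, yielding the final exceptional set $\{1,2,3,6\}$.
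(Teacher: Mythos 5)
Your final proposed organization --- Bass--Heller--Swan, then Swan's finiteness of $\tilde K_0(\Z[\Z/n])$, then the computation that $\Wh(\Z/n)$ is infinite iff $n \notin \{1,2,3,4,6\}$, then the observation that $\widetilde\Nil_1(\Z[\Z/n])$ is nonzero (and infinite) precisely when $n$ is not squarefree --- is exactly the paper's proof. The paper cites Milnor for the $\Wh(\Z/n)$ statement, Swan for $\tilde K_0$, and Nagy--Nicholson--Powell for the assembled Nil statement.

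However, the proposal as written contains a genuine error that you only escape because you already know the answer. In the first paragraph you assert that $\widetilde\Nil_1(\Z[\Z/n])$ ``is always a torsion group... is otherwise at worst a finite group; either way it contributes nothing to infiniteness,'' and you propose to ``dispose of'' the Nil terms on this basis. This is false, and if it were true it would yield the exceptional set $\{1,2,3,4,6\}$, not $\{1,2,3,6\}$. The fact you actually need --- and which you eventually reverse to after noticing the tension at $n=4$ --- is a theorem of Farrell: if $\mathrm{Nil}$ is nonzero, it is \emph{not} finitely generated. Combined with the vanishing of $\mathrm{Nil}$ for regular rings and the fact that $\Z[\Z/n]$ is regular iff $n$ is squarefree, this gives $\widetilde\Nil_1(\Z[\Z/n])$ infinite iff $n$ not squarefree, which is exactly what supplies the $n=4$ case. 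So the Nil contribution is not a ``remaining case-check'' to tack on at the end; it is a co-equal pillar of the argument that must be stated correctly from the start, with the right citation (Farrell, not ``Bass, Harmon/Weibel''). A tidied version of your final summary paragraph, with the first-paragraph claim deleted and Farrell cited, would be a correct proof matching the paper's.

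A minor remark on the $\Wh(\Z/n)$ rank computation: your cyclotomic-unit argument is sound once the proper-divisor/divisor confusion is resolved, but you don't actually need to rederive Dirichlet's unit theorem; the statement that $\Wh(\Z/n)$ is infinite iff $n \neq 1,2,3,4,6$ is a direct citation to Milnor, which is what the paper does.
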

 
\begin{proof}
	The groups $\widetilde \Nil_1(\Z/n)$ are infinite if and only if they are non-zero if and only if $n$ is not square free (see \cite[Theorem 5.8]{Nagy2023} for a fully assembled proof). The Whitehead group $\Wh(\Z/n)$ is infinite if and only if $n \neq 1,2,3,4,6$ \cite[Corollary 6.5]{Milnor1966}. Finally $\tilde K_0(\Z[\Z/n])$ is always finite \cite[Proposition 9.1]{Swan1960}. Assembling these using the Bass--Heller--Swan decomposition gives the desired result.
\end{proof}

The following spaces satisfy the assumptions of \cref{StructureCorollary}.

\begin{example}\label{ex1}
	The lens space $S^1 \times L(p,q_1,\dots, q_n)$ for $p \neq 2,3$. This has infinite Whitehead group by \cref{WhiteheadGroupCalc}. This space has abelian fundamental group and the deck transformations on the universal cover are all the identity, so it is nilpotent.
\end{example}

\begin{example}\label{ex2}
	$S^1 \times S^3 / Q_8$. This has infinite Whitehead group because $\widetilde \Nil_1(Q_8)$ is infinite \cite[Proposition 52]{Guaschi2018}. It is easy to see that $Q_8$ is nilpotent, and again the deck transformations are all the identity so it is a nilpotent space. 
\end{example}

\begin{example}\label{ex3}
	The Lie group $U(n)/\langle \zeta \rangle$ for an $n^\text{th}$ root of unity $\zeta$ and with $n \neq 1,2,3,6$. This has infinite Whitehead group by \cref{WhiteheadGroupCalc}. This space is nilpotent because it is a Lie group.
\end{example}

\begin{remark}
	In many cases one can combine our vanishing result with the techniques of Nagy--Nicholson--Powell \cite{Nagy2023} to prove a stronger result than \cref{StructureCorollary}. We can find many infinite families of h-cobordant manifolds none of which are simply homotopy equivalent. If $M$ is a compact connected $d$-manifold with some some suitable assumptions to ensure that surgery is possible, \cite[Theorem 4.16]{Nagy2023} characterizes the collection of manifolds h-cobordant to $M$ modulo simple homotopy equivalence. Let $\pi_0\hAut(M)$ act on $\Wh_1(M)$ by having $g \colon M \to M$ take $\tau(f \colon N \to M)$ to $\tau(g \circ f)$. Let $$q \colon \Wh_1(M) \to \Wh_1(M)/ \pi_0\hAut(M)$$ be the quotient map. Let $\omega$ be the orientation character of $M$ and define $$\mathcal I_d(\pi_1(M), \omega ) = \{x - (-1)^d \bar x\  |\ x \in \Wh(\pi_1(M))\}$$ where $x \mapsto \bar x$ is some involution depending on $\omega$. Then the collection of manifolds h-cobordant but not simple homotopy to $M$ is equivalent to $q(I_d(\pi_1(M), \omega ))$. In the case where $M$ is nilpotent with infinite fundamental group, \cref{VanishingTheorem} gives that the action of $\pi_0\Aut(M)$ factors through $\Out(\pi_1(M))$. If $\Out(\pi_1(M))$ is finite, then one simply needs to verify that $\mathcal I_d(\pi_1(M), \omega )$ is infinite to find our infinite family. Notably, when $\pi_1(M) \iso \Z \times G$, \cite[Proposition 5.10]{Nagy2023} shows that $\widetilde{\Nil}_1(\Z[G])$ is a subset of $\mathcal I_d(\pi_1(M), \omega )$. Thus for $n$ not square free when relevant, we see that the Examples \ref{ex1}, \ref{ex2}, and \ref{ex3} give infinite families of h-cobordant manifolds which are not simple homotopy equivalent.  
\end{remark}

\subsection{Arithmeticity Theorem}

We conclude by using our vanishing result to prove \cref{MainArithmeticityThm}.

\begin{theorem}[\cref{MainArithmeticityThm}]\label{ArithmetictyCorollary}
	Let $X$ be a connected based nilpotent space with a finiteness structure. The group $\pi_0 \hAut_{*, \pi_1}^s(X)$ is commensurable to an arithmetic group.
\end{theorem}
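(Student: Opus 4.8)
The plan is to split the argument according to whether $\pi_1(X,x)$ is finite or infinite. First observe that, since $X$ admits a finiteness structure, it is a finite space and hence equivalent to a connected based finite nilpotent CW-complex; in particular $\pi_1(X,x)$ is a finitely generated nilpotent group, so Sullivan's theorem \cite[Theorem 6.1]{Sullivan1977}, in the corrected form established in \cref{Appendix}, applies to $X$, and $\pi_1(X,x)$ has a finite torsion subgroup $T$ with $\pi_1(X,x)/T$ torsion free.

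Suppose first that $\pi_1(X,x)$ is infinite. Then \cref{VanishingTheorem} shows that every based self-equivalence fixing $\pi_1(X,x)$ is simple, i.e. $\pi_0 \hAut_{*,\pi_1}^s(X) = \pi_0 \hAut_{*,\pi_1}(X)$, so it suffices to prove that $\pi_0 \hAut_{*,\pi_1}(X)$ is commensurable to an arithmetic group. Sullivan's theorem provides an algebraic group $G$ over $\mathbf{Q}$, a homomorphism $\rho \colon \pi_0 \hAut_*(X) \to G(\mathbf{Q})$ with finite kernel, and an arithmetic subgroup of $G(\mathbf{Q})$ commensurable with $\rho(\pi_0 \hAut_*(X))$. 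The action of a self-equivalence on the fundamental group is recorded by an algebraic morphism $G \to \underline{\Aut}(\pi_1(X,x)\otimes\mathbf{Q})$ compatible with $\pi_0 \hAut_*(X) \to \Aut(\pi_1(X,x))$; let $H \subseteq G$ be its kernel, again an algebraic group over $\mathbf{Q}$. A self-equivalence acts trivially on $\pi_1(X,x)\otimes\mathbf{Q}$ if and only if it acts trivially on the lattice $\pi_1(X,x)/T$, so $\rho^{-1}(H(\mathbf{Q}))$ is exactly the subgroup of $\pi_0 \hAut_*(X)$ acting trivially on $\pi_1(X,x)/T$, and it contains $\pi_0 \hAut_{*,\pi_1}(X)$ with quotient embedding into the group of automorphisms of $\pi_1(X,x)$ trivial on $\pi_1(X,x)/T$; since $T$ is finite and $\pi_1(X,x)/T$ is finitely generated this last group is finite, so $\pi_0 \hAut_{*,\pi_1}(X)$ has finite index in $\rho^{-1}(H(\mathbf{Q}))$. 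Finally $\rho(\rho^{-1}(H(\mathbf{Q})))$ is commensurable with an arithmetic subgroup of $H(\mathbf{Q})$ and $\rho$ has finite kernel, so $\rho^{-1}(H(\mathbf{Q}))$, and hence $\pi_0 \hAut_{*,\pi_1}(X)$, is commensurable to an arithmetic group in the sense of \cref{Appendix}.

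Now suppose $\pi_1(X,x)$ is finite. Then $\Aut(\pi_1(X,x))$ is finite, so $\pi_0 \hAut_{*,\pi_1}^s(X)$ has finite index in $\pi_0 \hAut_*^s(X)$; moreover the conjugation map $\pi_1(X,x) \to \pi_0 \hAut_*(X)$ has finite image, so, using the fibration sequence $\hAut_*(X) \to \hAut(X) \to X$, the group $\pi_0 \hAut_*^s(X)$ maps onto the group of simple self-equivalences of $X$ with finite kernel. By Bustamante--Krannich--Kupers \cite[Theorem 2.2]{Bustamante2023} this last group is commensurable to an arithmetic group, and commensurability is preserved under passage to finite-index subgroups and finite-kernel extensions, so $\pi_0 \hAut_{*,\pi_1}^s(X)$ is commensurable to an arithmetic group. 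It is worth emphasising that here one cannot simply invoke Sullivan's theorem and pass to a finite-index subgroup: $\Wh_1(X) = \Wh(\pi_1(X,x))$ may be infinite (for instance for $\pi_1(X,x) = \Z/5$), in which case the simple self-equivalences form an infinite-index subgroup of $\pi_0 \hAut_*(X)$, and controlling this is precisely the content of \cite[Theorem 2.2]{Bustamante2023}.

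The substantive inputs are external to this argument: the vanishing result \cref{VanishingTheorem}, the corrected proof of Sullivan's theorem given in \cref{Appendix}, and the finite fundamental group arithmeticity theorem of Bustamante--Krannich--Kupers. Granting these, the remaining work, and where I expect the main friction, is bookkeeping: tracking the various flavours of the self-equivalence monoid (based versus unbased, fixing $\pi_1$ or not, simple or not) and checking that commensurability, in the precise sense adopted in \cref{Appendix}, survives each comparison; and verifying that the algebraic morphism recording the $\pi_1$-action is genuinely compatible with Sullivan's embedding, so that the $\pi_1$-fixing subgroup is cut out by an algebraic subgroup and its integral points remain arithmetic. The case split is unavoidable precisely because, for finite fundamental group with infinite Whitehead group, simplicity is not a finite-index condition, so Sullivan's theorem alone does not suffice and the finite case must be routed through \cite[Theorem 2.2]{Bustamante2023}.
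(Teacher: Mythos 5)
Your overall architecture — reduce to $\pi_0\hAut_{*,\pi_1}(X)$ via \cref{VanishingTheorem} when $\pi_1$ is infinite, cite Bustamante--Krannich--Kupers when $\pi_1$ is finite, and cut out the $\pi_1$-fixing condition algebraically in the rational model — matches the paper's. But there is a genuine gap at the very end: you conclude ``commensurable to an arithmetic group'' from ``$\rho$ has finite kernel and arithmetic image,'' and similarly you assert that ``commensurability is preserved under \dots finite-kernel extensions.'' That inference is false in general. If $1 \to F \to G' \to G \to 1$ is an extension with $F$ finite and $G$ arithmetic, $G'$ need not be commensurable to an arithmetic group: Deligne's non-residually-finite central extensions of $\mathrm{Sp}_{2g}(\Z)$ by finite cyclic groups are counterexamples (a group commensurable to an arithmetic group is residually finite, since arithmetic groups are, and residual finiteness passes to finite-index subgroups). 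What you actually obtain is commensurability \emph{up to finite kernel}, which is exactly Sullivan's weaker notion that the paper goes out of its way to distinguish from the standard one. The paper closes this gap up front: it invokes Roitberg's theorem that $\pi_0\hAut_*(X)$ (and hence every subgroup) is residually finite, which is precisely what upgrades a finite-kernel surjection onto an arithmetic group to honest commensurability. Your proof never mentions residual finiteness and so does not close this gap; the same issue infects your finite-$\pi_1$ case, where BKK's Theorem 2.2 only gives commensurability up to finite kernel.

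Beyond the gap, there are two substantive differences in route. First, in the infinite case you realize $\pi_0\hAut_{*,\pi_1}(X)$ as a finite-index subgroup of $\rho^{-1}(H(\Q))$ for an algebraic subgroup $H = \ker\bigl(G \to \underline{\Aut}(\pi_1\otimes\Q)\bigr)$ and then intersect the arithmetic image with $H(\Q)$; the paper instead passes to kernels in a comparison square of maps
\[
\pi_0\hAut_*(X) \longrightarrow \Aut(\pi_1(X)),
\qquad
\Aut(\mathcal M)/{\sim_*} \longrightarrow \Aut(\mathcal M^1)/{\sim_*},
\]
and applies \cref{KernelAlgGroups} together with \cref{FiniteKernelImageArithmetic}. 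Your version is cleaner in spirit but leans on the unproved assertion that $G \to \underline{\Aut}(\pi_1\otimes\Q)$ is a morphism of algebraic groups compatible with $\rho$; the paper actually establishes the algebraicity of the corresponding restriction map $\Aut(\mathcal M)/{\sim_*} \to \Aut(\mathcal M^1)/{\sim_*}$ using that the degree-$\leq 1$ submodel is functorial and the quotient is by a unipotent algebraic subgroup (\cref{AlgebraicityCorollary}, \cref{SpecialAlgebraicGroupFact}). If you want to run your version, you should justify that algebraicity along similar lines. Second, in the finite case you reroute through unbased simple self-equivalences using the fibration $\hAut_*(X) \to \hAut(X) \to X$; the paper reads BKK's Theorem 2.2 as applying directly to $\pi_0\hAut^s_{*,\pi_1}(X)$ and cites it with no further argument, so your extra maneuver is at best unnecessary and at worst introduces another place where you need to track finite kernels carefully.
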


\begin{proof}
The group $\pi_0 \hAut_*(X)$ is residually finite by work of Roitberg \cite[Theorem 3]{Roitberg1984} and so all its subgroups are residually finite. Thus, to show that $\pi_0 \hAut^s_{*,\pi_1}(X)$ is commensurable to an arithmetic group we just need to show that there is a surjection with finite kernel from $\pi_0 \hAut^s_{*,\pi_1}(X)$ to an arithmetic group. That is, we need to show that $\pi_0 \hAut^s_{*,\pi_1}(X)$ is commensurable up to finite kernel to an arithmetic group. 

If $X$ has finite fundamental group this is just \cite[Theorem 2.2]{Bustamante2023}. 

Now assume that $X$ has infinite fundamental group. In this case \cref{VanishingTheorem} immediately gives that the Whitehead torsion of any element of $\pi_0 \hAut_{*,\pi_1}(X)$ vanishes, so $\pi_0 \hAut^s_{*,\pi_1}(X) = \pi_0 \hAut_{*,\pi_1}(X)$. So we just need to use rational homotopy theory to show that $\pi_0 \hAut_{*,\pi_1}(X)$ is commensurable up to finite kernel to an arithmetic group.

To begin, we have a commutative square of groups
\[\begin{tikzcd}
	{\pi_0 \hAut_*(X)} & {\Aut(\pi_1(X))} \\
	{\pi_0 \hAut_*(X_\Q)} & {\pi_0 \hAut_*((X_1)_\Q)}
	\arrow[from=1-1, to=1-2]
	\arrow[from=1-1, to=2-1]
	\arrow[from=1-2, to=2-2]
	\arrow[from=2-1, to=2-2]
\end{tikzcd}.\]
Now let $\mathcal M$ be the minimal model for $X$, in the sense of Sullivan \cite{Sullivan1977}. So $\mathcal M$ is a commutative differential graded algebra (CDGA) with augmentation coming from the basepoint of $X$. The bottom map in the square above is the map $$\Aut(\mathcal M) / {\sim_*} \to \Aut(\mathcal M^1) / {\sim_*}$$
given by restricting to the model generated by generators in degree at most $1$. Here $\sim_*$ is the equivalence relation of augmentation preserving CDGA homotopy. This map is algebraic because we are taking a map of algebraic groups and quotienting by an algebraic subgroup, see the proof of \cref{AlgebraicityCorollary} for details. Now if we take the kernels of the horizontal maps in the above square we get a map $$\pi_0\hAut_{*,\pi_1}(X) \to \Aut_{\mathcal M ^ 1}(\mathcal M) / {\sim_*}$$
where $\Aut_{\mathcal M ^ 1}(\mathcal M) / {\sim_*}$ is an algebraic subgroup of $\Aut(\mathcal M) / {\sim_*}$. Now, \cref{FiniteKernelImageArithmetic} says that the vertical homomorphisms of our square have finite kernel and image an arithmetic group. An application of \cref{KernelAlgGroups} gives that the map $\pi_0\hAut_{*,\pi_1}(X) \to \Aut_{\mathcal M ^ 1}(\mathcal M) / {\sim_*}$ has finite kernel and image an arithmetic group.
\end{proof}

There are various extensions of this result one might hope to hold.

\begin{question}
	With the assumptions in \cref{ArithmetictyCorollary}, is $\pi_0 \hAut^s_{*}(X)$ (here we are not fixing $\pi_1(X,x)$) commensurable to an arithmetic group? Is $\pi_0 \hAut^s_{*}(X)$ of finite type? 
\end{question}

\begin{question}\label{PseudoisotopyQuestion}
	If we assume that $M$ is a nilpotent manifold, is the group of pseudoisotopy classes of diffeomorphisms (possibly restricting to those that act as the identity on the fundamental group of $M$) commensurable to an arithmetic group?
\end{question}

One cannot hope for the group of isotopy classes of diffeomorphisms of $M$ to be commensurable to an arithmetic group because Hsiang--Sharpe \cite[p. 408, Example 1]{Hsiang1976} show it may not even be finitely generated. This is a problem which cannot be solved by fixing the fundamental group, the group will still be infinitely generated.

To answer \cref{PseudoisotopyQuestion} one might hope to use the techniques of Bustamante--Krannich--Kupers \cite[Theorem 2.6]{Bustamante2023}, but this is not straightforward because the $L$-theory groups they consider may be infinitely generated in this case. 

\appendix

\section{Arithmeticity of Self-equivalences}\label{Appendix}

The purpose of this appendix is to provide a detailed proof of the following theorem, which is used to prove \cref{ArithmetictyCorollary} and also the foundation for the proofs of other finiteness results about various automorphism groups (e.g. \cite{Kupers2025, Bustamante2023}).

\begin{theorem}[Sullivan {\cite[Theorem 6.1]{Sullivan1977}}]\label{ArithmeticityHomotopyAutomorphisms}
	If $X$ is a connected based finite nilpotent space then $\pi_0 \hAut_*(X)$ is commensurable to an arithmetic group.
\end{theorem}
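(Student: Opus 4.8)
The plan is to compare $\pi_0\hAut_*(X)$ with the automorphism group of the minimal Sullivan model of $X$ and then to isolate two facts: the comparison map has finite kernel, and its image is arithmetic. Fix a based rationalization $X \to X_\Q$. Since $X$ is nilpotent of finite type, $X_\Q$ is modeled by a minimal CDGA $\mathcal M = (\Lambda V, d)$ with $V$ of finite type, equipped with an augmentation $\epsilon$ recording the basepoint; and the based self-equivalence monoid of $X_\Q$ is computed by augmentation-preserving CDGA self-equivalences of $\mathcal M$ modulo augmentation-preserving homotopy, so that $\pi_0\hAut_*(X_\Q) \cong \Aut(\mathcal M) / {\sim_*}$. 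Precomposition with rationalization gives a homomorphism $r \colon \pi_0\hAut_*(X) \to \Aut(\mathcal M)/{\sim_*}$, and it suffices to establish: (i) $\Aut(\mathcal M)/{\sim_*}$ is the group of $\Q$-points of an affine algebraic $\Q$-group $\mathbf G$ carrying a natural integral structure $\mathbf G(\Z)$; (ii) $r$ has finite kernel; and (iii) the image of $r$ is commensurable with the arithmetic group $\mathbf G(\Z)$. These are \cref{AlgebraicityCorollary} and \cref{FiniteKernelImageArithmetic}. Granting them, $r$ exhibits $\pi_0\hAut_*(X)$ as commensurable up to finite kernel with $\mathbf G(\Z)$; since $\pi_0\hAut_*(X)$ is residually finite by Roitberg \cite[Theorem 3]{Roitberg1984}, one passes to a finite-index subgroup on which $r$ is injective and concludes genuine commensurability, exactly as in the proof of \cref{ArithmetictyCorollary}.

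For (i): a based CDGA endomorphism of $\mathcal M$ is a degreewise linear map on the generators $V$ compatible with $d$ and $\epsilon$, and these constraints are polynomial. Although $V$ need not be finitely generated, finiteness of $X$ (so that $H^*(X;\Q)$ is finite-dimensional) forces $\Aut(\mathcal M)/{\sim_*}$ to be finite-dimensional: an automorphism modulo homotopy is determined by its effect on a bounded-degree truncation of $\mathcal M$. Augmentation-preserving homotopy is implemented by exponentiating the Lie algebra of $\epsilon$-trivial degree-$(-1)$ derivations of $\mathcal M$, which is nilpotent because $X$ is nilpotent and $\mathcal M$ is minimal of finite type; so $\sim_*$ is the orbit equivalence of a normal unipotent algebraic subgroup $\mathbf U$ of $\Aut_\epsilon(\mathcal M)$, and by Chevalley's theorem the quotient $\mathbf G := \Aut_\epsilon(\mathcal M)/\mathbf U$ is again affine algebraic over $\Q$ with $\mathbf G(\Q) = \Aut(\mathcal M)/{\sim_*}$. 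The integral structure comes from the lattices $\pi_n(X)/\mathrm{torsion}$ inside the rational homotopy groups (equivalently from $H^*(X;\Z)/\mathrm{torsion}$), which honest self-equivalences preserve with bounded denominators and which thereby pin down $\mathbf G(\Z)$ up to commensurability.

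For (ii) and (iii) --- the heart of the matter, and the place where Triantafillou's correction \cite{Triantafillou1999} must be taken seriously --- one runs obstruction theory for maps of based spaces with based targets rather than with free models. A class in $\ker r$ is a based self-equivalence $f$ of $X$ with $f_\Q$ based-homotopic to the identity; since $X$ is a finite complex and is nilpotent of finite type, the fiber of the based rationalization over any class in $[X,X]_*$ is finite (standard localization theory: only finitely many primes occur and the relevant $p$-adic correction sets are finite), so $\ker r$ is finite. For the image, a self-equivalence of $X$ preserves integral (co)homology and homotopy, so $r$ lands in a subgroup commensurable with $\mathbf G(\Z)$; conversely, every element of a finite-index subgroup of $\mathbf G(\Z)$ is realized by a genuine self-equivalence of $X$, because an integral algebra automorphism of $\mathcal M$ determines a self-map of $X_\Q$ respecting the lattices, and the obstructions to lifting it through the Postnikov tower of $X$ lie in finite groups assembled from the torsion of $\pi_*(X)$ and depend linearly on the chosen automorphism, hence vanish on a finite-index subgroup. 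I expect the main obstacle to be not any single estimate but keeping the bookkeeping honest: the rationalization $X \to X_\Q$, the model $\mathcal M$ with its augmentation, and every homotopy in sight must live in the based category, and supplying this careful based treatment --- absent in \cite{Sullivan1977} --- is the real work of the remainder of this appendix.
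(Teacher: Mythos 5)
Your top-level structure matches the paper's: pass to the minimal model $\mathcal M$, show $\Aut(\mathcal M)/{\sim_*}$ is algebraic with the unipotent group $U_*$ of exponentiated $\epsilon$-trivial degree $(-1)$ derivations as the homotopy relation, establish that $r\colon \pi_0\hAut_*(X)\to\Aut(\mathcal M)/{\sim_*}$ has finite kernel and arithmetic image, then promote ``commensurable up to finite kernel'' to ``commensurable'' using Roitberg's residual finiteness. Your sketches of (i) and (ii) are in line with what the paper does (Chevalley for the quotient; the Hilton--Mislin--Roitberg finite-to-one property of localization for the kernel).

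The gap is in (iii). You assert that every element of a finite-index subgroup of $\mathbf G(\Z)$ lifts to a genuine self-equivalence of $X$ because ``the obstructions to lifting it through the Postnikov tower of $X$ lie in finite groups assembled from the torsion of $\pi_*(X)$ and depend linearly on the chosen automorphism, hence vanish on a finite-index subgroup.'' That last inference is not a proof --- linearity of an obstruction map into a finite group gives nothing for free, and indeed this direct realization strategy is essentially Sullivan's original sketch, which the paper's appendix exists to replace. The paper's argument for arithmeticity of the image is different in kind: it does not realize elements of $\mathbf G(\Z)$ at all. Rather, \cref{FiniteKernelImageArithmetic} is proved by induction up the (refined) Postnikov tower, applying Nomura's based obstruction theory (\cref{ObstructionTheory}) to the principal fibration $K(\pi_n,n)\to X_n\to X_{n-1}$ to get an exact sequence
\[
H^n(X_{n-1};\pi_n)\to\pi_0\hAut_*(X_n)\to\pi_0\hAut_*(X_{n-1})\times\Aut(\pi_n),
\]
comparing it with its rational analogue, identifying the image of the right-hand map as the stabilizer of the $k$-invariant, and then feeding the resulting comparison of short exact sequences into \cref{KernelAlgGroups} and \cref{ExactSequenceAlgGroups}. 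Your proposal names the need for a based obstruction theory but never runs it; the inductive bookkeeping over $k$-invariant stabilizers and the two algebraic-group lemmas are the content, and they are absent. You should also be careful that the paper does not rely on a preferred integral form $\mathbf G(\Z)$ cut out by lattices in $\pi_*(X)$; arithmeticity of the image is extracted from the inductive comparison, not from integrality of a realization.
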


The main issue with Sullivan's proof, originally observed by Triantafillou \cite[p. 284]{Triantafillou1999}, is that Sullivan only deals with unbased spaces, but the obstruction theory they cite requires a basepoint. In this sense, our \cref{ArithmeticityHomotopyAutomorphisms} is not actually equivalent to Sullivan's theorem \cite[Theorem 6.1]{Sullivan1977}. At the end of this appendix we obtain Sullivan's original theorem statement as \cref{SullivanOriginalTheorem}. Although Triantafillou notices that Sullivan's obstruction theory requires a basepoint, they fail to account for the corresponding basepoint in their rational homotopy theory. Accounting for this basepoint requires some care. Triantafillou also fills in many details which Sullivan's proof omits, but some sections of their proof are still not completely precise. We hope to give a more precise account.

When Sullivan \cite{Sullivan1977} says that two groups are ``commensurable'' they means commensurable up to finite kernel. The solution to the incongruence of Sullivan's terminology with the literature was pointed out by Serre, \cite[1.2 (8)]{Serre1979} who observed that one could use Sullivan's profinite homotopy theory \cite[Theorem 3.2]{Sullivan1974} to show that the group $\pi_0 \hAut(X)$ is residually finite. This idea was carried out by Mezher \cite[Theorem E]{Mezher2024}. If a group is residually finite, then being commensurable up to finite kernel implies being commensurable. We do not use Sullivan's profinite homotopy theory to prove \cref{ArithmeticityHomotopyAutomorphisms} because we are working with based homotopy self-equivalences, instead we use work of Roitberg \cite[Theorem 1.3]{Roitberg1984}.

There are other similar results in the literature. Before Sullivan, Wilkerson \cite{Wilkerson1976} gives a proof for simply connected spaces (in the simply connected case, there are no issues with Sullivan's proof either). Dror--Dwyer--Kan \cite{Dror1981} give a proof that for virtually nilpotent $X$, the group $\pi_0 \hAut(X)$ is of finite type. Although we are primarily interested in arithmetic groups because they are of finite type, in practice being arithmetic has useful consequences. In particular, although finite type groups are closed under extensions, they are almost never closed under taking subgroups. On the other hand, if we know that a subgroup of an arithmetic group is given by intersecting with the kernel of an algebraic map, then this subgroup is again arithmetic. we use this explicitly in the proof of \cref{ArithmetictyCorollary}. 

\

To begin working towards Sullivan's theorem, we will state some facts about arithmetic groups. By algebraic group, we will always mean affine rational algebraic group. 

\begin{fact}[{\cite[p. 106]{Serre1979}}]\label{GeneralAlgebraicGroupFacts}
Let $\varphi\colon G \rightarrow H$ be a map of algebraic groups and let $\Gamma \leq G$ be an arithmetic subgroup. 
\begin{enumerate}
		\item If $\varphi$ is surjective then $\varphi(\Gamma)$ is an arithmetic subgroup.
		\item $\ker(\varphi)$ is an algebraic group and $\ker(\varphi) \cap \Gamma$ is an arithmetic subgroup.
\end{enumerate}
\end{fact}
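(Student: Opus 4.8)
The plan is to dispose of (2) by elementary means and then reduce (1) to a statement about quotient maps, where the real input is reduction theory. Throughout I will use that ``arithmetic subgroup of $G$'' means commensurable with $G(\mathbb{Z}) = G \cap \mathrm{GL}_n(\mathbb{Z})$ for some (equivalently any) faithful $\mathbb{Q}$-rational representation $G \hookrightarrow \mathrm{GL}_n$, independence of the embedding being the usual bounded-denominators argument.

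For (2): the kernel $N := \ker(\varphi)$ is the preimage $\varphi^{-1}(e)$ of the closed point $\{e\} \subseteq H$, hence Zariski-closed in $G$; it is a subgroup and is defined over $\mathbb{Q}$ because $\varphi$ and $e$ are, so $N$ is a rational algebraic group. For arithmeticity of $N \cap \Gamma$, fix a faithful $\mathbb{Q}$-rational representation $G \hookrightarrow \mathrm{GL}_n$; restricting it to $N$ realizes $N$ faithfully over $\mathbb{Q}$, and $N(\mathbb{Z}) = N \cap \mathrm{GL}_n(\mathbb{Z}) = G(\mathbb{Z}) \cap N$. Since $\Gamma$ is commensurable with $G(\mathbb{Z})$, and since intersecting two commensurable subgroups of $G$ with the fixed subgroup $N$ preserves commensurability --- because $(\Gamma_1 \cap N)/(\Gamma_1 \cap \Gamma_2 \cap N)$ injects into $\Gamma_1/(\Gamma_1 \cap \Gamma_2)$ --- it follows that $\Gamma \cap N$ is commensurable with $G(\mathbb{Z}) \cap N = N(\mathbb{Z})$, hence arithmetic in $N$.

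For (1): working in characteristic $0$, a surjective $\mathbb{Q}$-morphism $\varphi$ factors as $G \twoheadrightarrow G/N \xrightarrow{\sim} H$ with $N = \ker(\varphi)$ and the second map an isomorphism of rational algebraic groups (an injective $\mathbb{Q}$-morphism of $\mathbb{Q}$-groups in characteristic $0$ is a closed immersion, and a surjective closed immersion is an isomorphism), so it suffices to show that the image of $\Gamma$ under the quotient map $\pi \colon G \to G/N$ is commensurable with $(G/N)(\mathbb{Z})$. One inclusion is routine: choosing a $\mathbb{Q}$-embedding $G/N \hookrightarrow \mathrm{GL}_m$, the composite $G \to \mathrm{GL}_m$ is given by polynomials with bounded denominators, so $\pi(\Gamma) \subseteq \mathrm{GL}_m(\mathbb{Z}[1/d])$ for some positive integer $d$; since $\pi(\Gamma)$ is finitely generated (a quotient of the finitely generated group $\Gamma$) and discrete in $(G/N)(\mathbb{R})$ (using that $\Gamma \cdot N(\mathbb{R})$ is closed in $G(\mathbb{R})$), it is commensurable into $(G/N)(\mathbb{Z})$. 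The reverse inclusion --- that $\pi(\Gamma)$ contains a finite-index subgroup of $(G/N)(\mathbb{Z})$ --- is where reduction theory enters: one uses that $\pi\colon G(\mathbb{R}) \to (G/N)(\mathbb{R})$ has open image of finite index, pushes a Siegel fundamental set for $\Gamma$ on $G(\mathbb{R})$ forward to a fundamental set for $\pi(\Gamma)$ on $(G/N)(\mathbb{R})$, and compares it with one for $(G/N)(\mathbb{Z})$ via the Borel--Harish-Chandra finiteness theorem; this is the content cited at \cite[p.~106]{Serre1979}.

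The main obstacle is part (1), which is not formal. Naive clearing of denominators only places $\pi(\Gamma)$ inside an $S$-arithmetic subgroup of $G/N$, and the passage from $S$-arithmetic to honest arithmetic --- false for subgroups in general --- relies on the finiteness input of reduction theory. One must also take care around $\mathbb{Q}$-characters of $N$, where $N(\mathbb{Z})$ need not be a lattice in $N(\mathbb{R})$, which is exactly why the argument is organized through fundamental sets and finite generation rather than through covolumes directly.
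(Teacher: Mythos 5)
The paper does not prove \cref{GeneralAlgebraicGroupFacts} at all; it is stated as a \texttt{fact} and attributed directly to \cite[p.~106]{Serre1979}. So there is no in-paper argument to compare against, and your goal should be to assess whether your sketch would pass as a proof in its own right, or at least correctly identifies the inputs one should cite.

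Your treatment of part (2) is clean and complete: the observation that $(\Gamma_1 \cap N)/(\Gamma_1 \cap \Gamma_2 \cap N)$ embeds in $\Gamma_1/(\Gamma_1 \cap \Gamma_2)$ does establish that intersecting with $N$ preserves commensurability, and combined with $N(\mathbb{Z}) = G(\mathbb{Z}) \cap N$ this gives the claim. For part (1), the reduction to the quotient map $G \to G/N$ via a closed immersion in characteristic $0$ is correct, and you correctly flag that the real content --- containment of a finite-index subgroup of $(G/N)(\mathbb{Z})$ in $\pi(\Gamma)$ --- needs reduction theory and Borel--Harish-Chandra, which is what Serre, in turn, is invoking. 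The one genuine soft spot is your ``routine'' inclusion. Placing $\pi(\Gamma)$ inside $\mathrm{GL}_m(\mathbb{Z}[1/d])$ by clearing denominators is fine, but the subsequent step --- passing from ``finitely generated, discrete in $(G/N)(\mathbb{R})$, and contained in $\mathrm{GL}_m(\mathbb{Z}[1/d])$'' to ``commensurable into $(G/N)(\mathbb{Z})$'' --- does not follow from discreteness at the archimedean place alone. One must also control the group at the finite places dividing $d$ (i.e.\ show boundedness in $\mathrm{GL}_m(\mathbb{Q}_{p})$ for $p \mid d$), which is exactly the $S$-arithmetic-to-arithmetic gap you name in your final paragraph; as written, though, you appear to assert the inclusion before you concede that gap, rather than folding it into the same appeal to reduction theory. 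Tightening that ordering, or explicitly citing the place where this boundedness is established (e.g.\ Borel's \emph{Density and maximality of arithmetic subgroups} or Platonov--Rapinchuk, Theorem 4.1), would make the argument match the level of rigor you use elsewhere.
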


\begin{fact}[{\cite[Corollary 3.8 (1)]{Chatterjee2015}}]\label{SpecialAlgebraicGroupFact}
	Let $U$ and $G$ be algebraic groups with $U$ unipotent. Then any group homomorphism $U \to G$ is a map of algebraic groups. 
\end{fact}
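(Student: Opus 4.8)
In the setting of this appendix an ``algebraic group'' is an affine algebraic group over $\mathbb{Q}$ and a ``group homomorphism $U \to G$'' is an abstract homomorphism of the groups of $\mathbb{Q}$-points; both hypotheses — that the source is a group of $\mathbb{Q}$-points and that it is unipotent — are essential, since $\mathrm{Hom}((\mathbb{Q},+),(\mathbb{Q},+))$ is exactly the set of $\mathbb{Q}$-linear maps and so agrees with $\mathrm{Hom}_{\mathrm{alg}}(\mathbb{G}_a,\mathbb{G}_a)$, while the analogous statement fails over $\mathbb{R}$ or $\mathbb{C}$. The plan has three steps. First, a closed embedding $G \hookrightarrow \mathrm{GL}_n$ reduces us to the case $G = \mathrm{GL}_n$: a morphism of algebraic groups into $\mathrm{GL}_n$ whose image lands in the closed subgroup $G$ automatically factors through $G$, and since $U(\mathbb{Q})$ is Zariski-dense in $U$ an algebraic structure on a given abstract homomorphism is unique if it exists. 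So it suffices to prove that every abstract representation $\rho \colon U(\mathbb{Q}) \to \mathrm{GL}_n(\mathbb{Q})$ is algebraic.

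The second step is to show the image $N := \rho(U(\mathbb{Q}))$ is unipotent. Since $\mathbb{Q}$ has characteristic zero, the $m$-th power map on $U$ is an isomorphism of varieties — it is so on each $\mathbb{G}_a$-subquotient of the descending central series, and Baker--Campbell--Hausdorff handles the extensions — so $U(\mathbb{Q})$, hence $N$, is divisible. A divisible subgroup of $\mathrm{GL}_n(\mathbb{Q})$ is moreover torsion-free, because its torsion subgroup is again divisible and a nontrivial divisible torsion group has elements of unbounded order, contradicting Minkowski's bound on torsion in $\mathrm{GL}_n(\mathbb{Q})$. Next, divisibility forces the Zariski closure $\overline{N}$ to be connected (the finite group $\pi_0(\overline{N})$ is a quotient of $N$), so by the structure theory of connected nilpotent linear algebraic groups over a perfect field $\overline{N} = T \times W$ with $T$ a torus and $W$ unipotent. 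The $\mathbb{Q}$-points $T(\mathbb{Q})$ embed, over a splitting field $K$, into a power of $K^{\times}$ and hence contain no nontrivial divisible subgroup, so the divisible group $N$ maps trivially to $T(\mathbb{Q})$; therefore $N \subseteq W(\mathbb{Q})$, $N$ is Zariski-dense in the unipotent $\mathbb{Q}$-group $W$, and every element of $N$ is unipotent.

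The third step uses logarithms. In characteristic zero $\exp \colon \mathfrak{u}_\mathbb{Q} \xrightarrow{\sim} U(\mathbb{Q})$ and $\exp \colon \mathfrak{w}_\mathbb{Q} \xrightarrow{\sim} W(\mathbb{Q})$ are bijections carrying the Baker--Campbell--Hausdorff law to the group laws, so $\psi := \log \circ \rho \circ \exp \colon \mathfrak{u}_\mathbb{Q} \to \mathfrak{w}_\mathbb{Q}$ is a homomorphism for the $\mathrm{BCH}$ operations. The crux is that $\psi$ is $\mathbb{Q}$-linear. Homogeneity $\psi(tX) = t\,\psi(X)$ for $t \in \mathbb{Q}$ follows by restricting $\rho$ to the one-parameter subgroup $\{\exp(tX) : t \in \mathbb{Q}\} \cong (\mathbb{Q},+)$, since any homomorphism out of the $\mathbb{Q}$-vector space $(\mathbb{Q},+)$ is $\mathbb{Q}$-linear; additivity then follows by downward induction along the lower central series of $\mathfrak{u}$, using $\mathrm{BCH}(X,Y) = X + Y + \tfrac12 [X,Y] + \cdots$ to confine $\psi(X+Y) - \psi(X) - \psi(Y)$ to a deeper term of the series, where the inductive hypothesis kills it. Once $\psi$ is linear, comparing the degree-two parts of $\psi \circ \mathrm{BCH} = \mathrm{BCH} \circ (\psi \times \psi)$ shows it is a Lie algebra homomorphism, so $\exp \circ \psi \circ \log$ is given by polynomials over $\mathbb{Q}$ and is a morphism of algebraic groups $U \to W \hookrightarrow \mathrm{GL}_n$ restricting to $\rho$ on $\mathbb{Q}$-points.

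The main obstacle I anticipate is the second step — forcing unipotence of the image — because this is where the hypotheses ``unipotent'' and ``$\mathbb{Q}$-points'' are genuinely used, via the structure theory of nilpotent $\mathbb{Q}$-groups together with the failure of divisibility in the $\mathbb{Q}$-points of a torus; the bookkeeping with $\mathrm{BCH}$ and the lower central series in the third step is routine but somewhat delicate.
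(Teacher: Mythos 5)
This is a \emph{Fact} that the paper cites from Chatterjee without giving a proof, so there is no argument in the paper to compare your sketch against; I'll assess it on its own terms. Your three-step plan — reduce to $G = \mathrm{GL}_n$ using a faithful representation and Zariski density of $U(\mathbb{Q})$, show the image is unipotent via divisibility and the decomposition of a connected nilpotent $\mathbb{Q}$-group as torus times unipotent, then pass through $\exp/\log$ and Baker–Campbell–Hausdorff — is the standard route for this classical result, and the overall strategy is correct. Two comments. In step two, the paragraph about torsion-freeness (Minkowski bounds) is a true aside but is never used: the torus-killing argument runs entirely on divisibility and the structure of $T(\mathbb{Q}) \hookrightarrow (K^\times)^r$, which is all you need. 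You should also say explicitly why $\overline{N}$ is nilpotent — namely, $U(\mathbb{Q})$ is nilpotent of some class $c$ since $U$ is unipotent, the $c$-fold commutator identity is a Zariski-closed condition, hence it passes to the closure.

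The one place where I'd push back is the bookkeeping you flag as delicate in step three, because the order of operations you propose does not quite work. You want to prove $\mathbb{Q}$-linearity of $\psi$ first by downward induction on the lower central series, and \emph{then} deduce that $\psi$ respects brackets from the degree-two part of $\mathrm{BCH}$. But when you try to run the inductive step for additivity — take $X,Y \in \mathfrak{u}^k$, write $X+Y = \mathrm{BCH}(\mathrm{BCH}(X,Y), -D')$ with $D' \in \mathfrak{u}^{2k}$, and expand — the leftover term you need to cancel is exactly $\psi(\tfrac12[X,Y]) - \tfrac12[\psi(X),\psi(Y)]$ modulo a deeper layer of the filtration. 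So linearity at level $k$ already requires knowing $\psi([X,Y]) \equiv [\psi(X),\psi(Y)]$ modulo deeper terms; the two properties have to be established simultaneously by a single induction (say, proving that $\psi$ is a Lie algebra morphism modulo $\mathfrak{w}^m$ for increasing $m$), not sequentially. This is precisely the content of the Mal'cev correspondence for torsion-free divisible nilpotent groups of finite rank, and it is cleaner to cite that correspondence directly — homomorphisms of such groups are in natural bijection with morphisms of their rational Lie algebras — which is also closer to how such results are usually assembled. With that adjustment, the argument closes.
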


We will prove \cref{ArithmeticityHomotopyAutomorphisms} by induction up the Postnikov tower of $X$. For the induction step we will use the next two facts. The first result was originally attributed in \cite{Triantafillou1999} without reference to Borel and proved by Kupers. 

\begin{fact}[{\cite[Lemma 2.10]{Kupers2025}}]\label{ExactSequenceAlgGroups}
	Suppose we have a map of short exact sequences of groups
\[\begin{tikzcd}
	1 & A & {A'} & {A''} & 1 \\
	1 & G & {G'} & {G''} & 1
	\arrow[from=1-1, to=1-2]
	\arrow[from=1-2, to=1-3]
	\arrow["r", from=1-2, to=2-2]
	\arrow[from=1-3, to=1-4]
	\arrow["{r'}", from=1-3, to=2-3]
	\arrow[from=1-4, to=1-5]
	\arrow["{r''}", from=1-4, to=2-4]
	\arrow[from=2-1, to=2-2]
	\arrow[from=2-2, to=2-3]
	\arrow[from=2-3, to=2-4]
	\arrow[from=2-4, to=2-5]
\end{tikzcd}\]
	with the bottom a sequence of algebraic groups and with $G$ unipotent. Further suppose that $r$ and $r''$ both have finite kernel and image an arithmetic group. Then $r'$ also has finite kernel and image an arithmetic group. 
\end{fact}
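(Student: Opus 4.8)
The statement has two parts. That $r'$ has finite kernel is immediate and uses nothing about the $G$'s: an element of $\ker r'$ has image in the finite group $\ker r''$, and its preimage in $\ker r'$ is $\ker r' \cap A$, which by commutativity of the left-hand square equals $\ker r$, also finite; hence $\ker r'$ is finite. The content is that $r'(A')$ is arithmetic. Write $\pi\colon G'\to G''$, $\Lambda:=r'(A')$, $\Gamma:=r(A)$, $\Gamma'':=r''(A'')$. First I would clear away the kernels: since $\ker r'$ is finite, normal in $A'$, and meets $A$ in $\ker r$, replace $A'$ (and compatibly $A,A''$) by its quotient by $\ker r'$, which leaves the $G$'s and the groups $\Gamma,\Gamma''$ untouched and makes $r,r'$ injective; then absorb $\ker r''$ by replacing $A''$ with $A''/\ker r''$ and $A$ with the $A'$-preimage of $\ker r''\le A''$, whose image under $r'$ lies in $G$ and is commensurable with $\Gamma$, hence still arithmetic in $G$ (arithmeticity of a subgroup being a commensurability invariant). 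We are thereby reduced to showing: if $\Lambda\le G'$ has $\Lambda\cap G$ arithmetic in $G$ and $\pi(\Lambda)$ arithmetic in $G''$, then $\Lambda$ is arithmetic in $G'$.

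This I would prove by induction on $\dim G$, the base case $\dim G=0$ being trivial (then $\pi$ is an isomorphism). The crucial case is $G=\mathbb{G}_a^m$ abelian (normal in $G'$, not necessarily central), where the conjugation action of $G'$ on $G$ factors through $G''$. Fix any arithmetic subgroup $\Delta\le G'$ (every $\mathbb{Q}$-algebraic group has one, e.g.\ $G'(\mathbb{Z})$ for a $\mathbb{Q}$-embedding into $GL_N$); by \cref{GeneralAlgebraicGroupFacts}, $\Delta\cap G$ is arithmetic in $G$ and $\pi(\Delta)$ arithmetic in $G''$, so $\Delta\cap G$ is commensurable with $\Lambda\cap G$ and $\pi(\Delta)$ with $\pi(\Lambda)$. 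After replacing $\Lambda$ and $\Delta$ by finite-index subgroups we may assume $\pi(\Lambda)=\pi(\Delta)=:\Gamma''$. Choosing set-theoretic sections $s\colon\Gamma''\to\Lambda$ and $t\colon\Gamma''\to\Delta$ of $\pi$ fixing the identity and comparing their $2$-cocycles, the difference $u(\gamma):=s(\gamma)t(\gamma)^{-1}\in G$ descends, modulo a suitable $\Gamma''$-invariant lattice $L\le G$ commensurable with both $\Lambda\cap G$ and $\Delta\cap G$, to an honest crossed homomorphism $\bar u\colon\Gamma''\to G/L$ for the (factored-through-$G''$) action. Here unipotence of $G$ is used decisively: $G(\mathbb{Q})/L\cong(\mathbb{Q}/\mathbb{Z})^m$ is a torsion group, and $\Gamma''$ is finitely generated (arithmetic groups are), so $\bar u$ has finite image, lands in a finite $\Gamma''$-submodule, and on the finite-index subgroup of $\Gamma''$ acting trivially there it becomes a homomorphism to a finite group with finite-index kernel. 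Tracking $\Lambda\cap\Delta$ through this shows it has finite index in both $\Lambda$ and $\Delta$; hence $\Lambda$ is commensurable with $\Delta$, so arithmetic in $G'$.

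For the inductive step let $G_0\le G$ be the last nontrivial term of the lower central series of $G$; it is abelian (even central in $G$) and characteristic in $G$, hence normal in $G'$, and $\dim(G/G_0)<\dim G$. Applying the inductive hypothesis to $1\to G/G_0\to G'/G_0\to G''\to 1$ and the subgroup $\Lambda G_0/G_0$ — which meets $G/G_0$ in the image of the arithmetic group $\Lambda\cap G$, arithmetic in $G/G_0$ by \cref{GeneralAlgebraicGroupFacts}, and has image $\pi(\Lambda)$ in $G''$ — gives that $\Lambda G_0/G_0$ is arithmetic in $G'/G_0$; then the abelian case, applied to $1\to G_0\to G'\to G'/G_0\to 1$ and $\Lambda$, using that $\Lambda\cap G_0=(\Lambda\cap G)\cap G_0$ is arithmetic in $G_0$ (again \cref{GeneralAlgebraicGroupFacts}, since $G_0=\ker(G\to G/G_0)$), yields that $\Lambda$ is arithmetic in $G'$.

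The hard part is the abelian case, and within it the commensurability bookkeeping around the difference cocycle: constructing the $\Gamma''$-invariant lattice $L$, checking that $\bar u$ is genuinely a crossed homomorphism for it, and — most delicately — passing from ``$u$ lies in $L$ on a finite-index subgroup'' to ``$\Lambda\cap\Delta$ has finite index in $\Lambda$''. The conceptual point, and the one place the hypothesis is truly needed, is that the obstruction lives in a \emph{torsion} group precisely because $G$ is unipotent; the argument would break if $G$ had a nontrivial torus quotient.
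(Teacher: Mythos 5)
The paper does not itself prove this Fact --- it is cited from \cite[Lemma 2.10]{Kupers2025} --- so there is no in-paper argument to compare against, and your proposal must stand on its own. Your plan (clear the kernels to reduce to the claim that a subgroup $\Lambda \le G'$ with $\Lambda \cap G$ and $\pi(\Lambda)$ arithmetic is arithmetic; then induct on $\dim G$ with the abelian base case controlled by a difference cocycle modulo a commensurable lattice) is sensible, and I have checked the reductions, the cocycle computation, the finite-index bookkeeping via $\Lambda\cap\Delta$, and the inductive step with some care; they go through.

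There is, however, one genuine gap: your reduced claim is false as you state it. Take $G' = G \times G''$ with $G$ and $G''$ both the one-dimensional additive group, and let $\Lambda$ be the image of $\Z^2$ under $(a,b) \mapsto (a + b\sqrt{2},\, b)$. Then $\Lambda \cap G = \Z$ and $\pi(\Lambda) = \Z$ are arithmetic, but $\Lambda$ is not arithmetic in $G'$ (it meets $\Z^2$ only in $\Z \times 0$, so it is not commensurable with $\Z^2$). The missing hypothesis is $\Lambda \le G'(\Q)$. Correspondingly, the Fact itself must be read with the convention that the groups in the bottom row are groups of $\Q$-points of algebraic groups and that $r'$ lands in $G'(\Q)$; otherwise the example above, fed in with top row $\Z \to \Z^2 \to \Z$, contradicts the Fact. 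You do use this rationality tacitly --- writing $u(\gamma) \in G(\Q)$ and forming $G(\Q)/L$ presupposes $s(\gamma) \in G'(\Q)$ --- but it needs to be stated as a hypothesis of the reduced claim and then seen to persist: through the two kernel-clearing reductions (where it persists trivially, since the bottom row and $\Lambda = r'(A')$ are unchanged), and through the induction, where one should note that $\Lambda G_0/G_0$ lies in the $\Q$-points of $G'/G_0$ precisely because $\Lambda \le G'(\Q)$.
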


We do not know of a reference for our final fact, so we provide a proof.

\begin{fact}\label{KernelAlgGroups}
	Suppose we have a map of exact sequences of pointed sets
\[\begin{tikzcd}[cramped]
	1 & A & {A'} & {A''} \\
	1 & G & {G'} & {G''}
	\arrow[from=1-1, to=1-2]
	\arrow["i", from=1-2, to=1-3]
	\arrow["r", from=1-2, to=2-2]
	\arrow["q", from=1-3, to=1-4]
	\arrow["{r'}", from=1-3, to=2-3]
	\arrow["{r''}", from=1-4, to=2-4]
	\arrow[from=2-1, to=2-2]
	\arrow["I"', from=2-2, to=2-3]
	\arrow["Q", from=2-3, to=2-4]
\end{tikzcd}\]
	where $i$, $r$ and $r'$ are maps of groups and $I$ is a map of algebraic groups. Further suppose that $r'$ and $r''$ both have finite kernel and $r'$ has image an arithmetic subgroup. Suppose that the map $q$ satisfies that $q(y) = q(y')$ implies $q(y'y^{-1}) = 1$. Then $r$ has finite kernel and image an arithmetic subgroup.
\end{fact}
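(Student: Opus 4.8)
The plan is to chase the diagram. Since $r$ is automatically a map of groups (it is $r'$ restricted to $A = \ker$-ish... well, to the subgroup $i(A)$, but $i$ is a group map so $A$ is a group and $r = r'i$ up to identification — wait, actually $r$ is given as a group map), the only thing to prove is that $\ker(r)$ is finite and that $r(A)$ is an arithmetic subgroup of $G$. I would first observe that $r(A) = r'(i(A))$ and that, because the left square commutes, $r'(i(A)) = I(r(A)) \subseteq I(G)$; more usefully, $r'(i(A))$ lands in $I(G)$, and since $I$ is an injective... no, $I$ need not be injective. Let me instead argue directly: $r(A) \subseteq \ker(Q) \cap r'(A')$ is not quite it either. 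The cleanest route: show $r(A) = I^{-1}(\text{something})$ or rather that $r'(i(A)) = \ker Q \cap r'(A')$, so that $r(A) \cong$ this intersection modulo the finite group $\ker r' \cap i(A)$.

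First I would establish \emph{finiteness of $\ker r$}. We have $\ker r \subseteq \ker(r'i)$; since $i$ is a group homomorphism, $\ker(r'i) = i^{-1}(\ker r')$, and as $i$ is injective on... hmm, $i$ need not be injective. But $\ker r = \{a \in A : r(a) = 1\}$, and $r(a) = I(r(a))$... no. Use commutativity of the left square: $I \circ r = r' \circ i$, so if $r(a) = 1$ then $r'(i(a)) = I(1) = 1$, i.e. $i(a) \in \ker r'$. Thus $r$ restricted to $i^{-1}$ of a complement... more simply, $\ker r \subseteq (r' i)^{-1}(1) = i^{-1}(\ker r')$ — wait that's wrong too since $I$ could kill things. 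Actually $I\circ r = r'\circ i$ gives: $a \in \ker r \Rightarrow r'(i(a)) = I(r(a)) = I(1_G) = 1$, so $i(a) \in \ker r'$, hence $a \in i^{-1}(\ker r')$, a finite set as $\ker r'$ is finite (and $i$ has some fibers, but $i^{-1}$ of a finite set under a group hom is a union of finitely many cosets of $\ker i \subseteq \ker r$... so actually I need $\ker i$ finite too, which follows since $\ker i \subseteq \ker r'i$ and...). I will sort this: in fact $\ker i \subseteq \ker(r'i) = \ker(I r)$ and more to the point $\ker r \supseteq \ker i$ is false in general; the correct statement is just $\ker r \subseteq i^{-1}(\ker r')$ which is finite iff $\ker i$ and $\ker r'$ are, and $\ker i \subseteq \ker r$ forces $\ker i$ finite once we... this is circular, so the honest argument is that $\ker i$ maps to $1$ under $I r$ hence... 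I would simply note $\ker r$ injects into $\ker r'$ is the goal — this holds because $r(a)=1, r(b)=1, a\ne b$ with $i(a)=i(b)$ would need $\ker i \ne 1$; so one shows $\ker i \subseteq \ker r$ is automatic? No. \textbf{I will instead assume (as is surely intended and holds in the application where $i$ is an inclusion) that the rows behave well, or argue via the key hypothesis on $q$.}

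The substantive step, and the \textbf{main obstacle}, is identifying $r(A)$ as an arithmetic subgroup of $G$. Here is where the hypothesis ``$q(y) = q(y')$ implies $q(y'y^{-1}) = 1$'' enters: it says the fibers of $q$ over $r'(A')$-related points are cosets, which lets us conclude that $r'(i(A)) = r'(q^{-1}(1)) \cap r'(A')$, i.e. $r'(\ker q) $ equals $\ker Q \cap r'(A')$ inside $G'$ — more precisely, I want $I(r(A)) = \ker Q \cap \mathrm{im}(r')$. One inclusion is clear from commutativity. For the reverse, given $z \in \ker Q \cap r'(A')$, write $z = r'(y)$; then $Q r'(y) = r'' q(y) = 1$ so $q(y) \in \ker r''$, a finite set; using the coset hypothesis on $q$, $y$ differs from an element of $q^{-1}(1) = i(A)$ by a bounded-order correction, so $z$ is, up to the finite group $r'(\text{preimages of }\ker r'')$, in $r'(i(A)) = I(r(A))$. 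Hence $I(r(A))$ is, up to finite index issues, $\ker Q \cap \mathrm{im}(r')$, which is an algebraic subgroup ($\ker Q$ is algebraic, $\mathrm{im}(r')$ is arithmetic) intersected with an arithmetic group — hence arithmetic by \cref{GeneralAlgebraicGroupFacts}(2) applied to $Q|_{\overline{\mathrm{im}(r')}}$, where $\overline{\cdot}$ denotes Zariski closure. Pulling back along $I$ and using that $\ker(I|_{\mathrm{im}\, r}) \subseteq$ a finite group (since $\ker r' / \ker r$ controls it), we get $r(A)$ arithmetic. I would organize the write-up as: (1) $\ker r$ finite; (2) $I(r(A)) = \ker Q \cap \mathrm{im}(r')$ up to finite error, using the $q$-hypothesis; (3) invoke \cref{GeneralAlgebraicGroupFacts}(2) and \cref{SpecialAlgebraicGroupFact} (noting $\mathrm{im}\,r'$ arithmetic sits in an algebraic group, and if needed $A$ maps from a unipotent-like source) to conclude $r(A)$ is arithmetic; (4) assemble. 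The delicate point throughout is bookkeeping the several finite kernels so that "arithmetic up to finite kernel" propagates correctly, and making precise in step (2) exactly which Zariski closures the hypothesis on $q$ is being used to control.
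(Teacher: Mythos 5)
Your overall strategy coincides with the paper's: reduce to showing that $\im(r)$ (equivalently $I(r(A)) = r'(i(A))$) has finite index in $\im(r') \cap \im(I)$, using the $q$-hypothesis together with the finiteness of $\ker r''$, and then invoke Serre's fact that an arithmetic subgroup intersected with an algebraic subgroup is arithmetic. Your coset-counting idea is essentially the paper's, phrased slightly differently (the paper shows distinct left cosets of $\im r$ in $\im r' \cap \im I$ map to distinct elements of $\ker r''$ via $x \mapsto q(y)$ for a lift $y$ of $x$ under $r'$; you cover $\ker Q \cap \im r'$ by finitely many cosets indexed by $\ker r''$). However, there are two genuine gaps in your write-up.

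First, you never establish that $\ker r$ is finite: you correctly reduce it to finiteness of $\ker i$ and $\ker r'$, then declare this "circular" and say you will "simply assume" that $i$ is injective. This follows from the hypotheses and should be proven, not assumed: the top row is an exact sequence of pointed sets, and exactness at $A$ (the $1 \to A \xrightarrow{i} A'$ part) says precisely that $i^{-1}(\ast) = \{\ast\}$; since $i$ is a group homomorphism this means $\ker i = 1$, so $\ker r$ embeds in $\ker r'$ and is finite. (The same argument applied to the bottom row shows $I$ is injective, which you implicitly need in order to regard $G$ as a subgroup of $G'$ and talk about $\im r$ sitting inside $\im r' \cap \im I$.) Second, your justification for why $\ker Q \cap \im r'$ is arithmetic is wrong as stated: you propose applying \cref{GeneralAlgebraicGroupFacts}(2) to ``$Q|_{\overline{\im r'}}$,'' but $Q$ is merely a map of pointed sets, not a group homomorphism, and a fortiori not a map of algebraic groups (the hypothesis only says $I$ is). The correct reasoning is that $\ker Q = \im I$ by exactness of the bottom row, and $\im I$ is a closed algebraic subgroup of $G'$ because $I$ is a map of algebraic groups; hence $\im r' \cap \im I$ is arithmetic in $\im I$. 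Your appeal to \cref{SpecialAlgebraicGroupFact} is also a red herring here.
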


\begin{proof}
	The map $r$ has finite kernel because $r'$ does. The group $\im(r') \cap \im(I)$ is an arithmetic subgroup of  $\im(I)$ \cite[p. 106]{Serre1979}. Now $\im(r)$ is a subgroup of $\im(r') \cap \im(I)$. Let $x, x' \in \im(r') \cap \im(I)$ represent different left-cosets of $\im(r)$ in $\im(r') \cap \im(I)$. Since $x, x' \in \im(I)$ we get that $Q(x) = 1 = Q(x')$. Let $y$ and $y'$ be lifts of $x$ and $x'$ under $r'$. Define $z = q(y)$ and $z' = q(y')$. Then, commutativity of the above diagram gives that $z, z' \in \ker (r'')$. If $z = z'$ then $q(y' y^{-1}) = 1$ i.e. $i(w)y = y'$ for some $w \in A$. Then $r(w) x = x'$ contradicting our assumption that $x$ and $x'$ represent different left-cosets. Thus, different left-cosets give distinct elements of $\ker(r'')$. This shows that $\im(r)$ is a finite index subgroup of $\im(r') \cap \im(I)$ and is therefore arithmetic.
\end{proof}

We can now begin the rational homotopy theory. To start, we will build the algebraic group in which the group of based self-equivalences is arithmetic. For the remainder of this appendix $X$ is a based, connected, nilpotent space or perhaps a Postnikov truncation thereof. Let $(\mathcal M, d)$ be the minimal model of $X$ in the sense of Sullivan \cite{Sullivan1977}. So, $(\mathcal M, d)$ is a commutative differential graded algebra (CDGA). $\mathcal M$ also has an augmentation coming from the basepoint of $X$. 

\begin{proposition}
	Let $\mathfrak u$ be the set of all elements of the form $[i,d] = di + id$ where $i$ is a degree $-1$ derivation of $\mathcal M$. Here $[-,-]$ is the Lie bracket on the graded Lie algebra of all graded derivations of $\mathcal M$. Further, define $\mathfrak u_*$ to be the subset of $\mathfrak u$ such that the derivation $i$ vanishes on the degree $1$ elements $\mathcal M$. Then, $\mathfrak u$ and $\mathfrak u_*$ are nilpotent Lie algebras.
\end{proposition}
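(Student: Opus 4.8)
The plan is to realize $\mathfrak u$ and $\mathfrak u_*$ as the images of the operator $\partial := [d,-] = \mathrm{ad}_d$ on the graded Lie algebra $\mathrm{Der}_*(\mathcal M)$ of graded derivations of $\mathcal M$: by construction $\mathfrak u = \partial(\mathrm{Der}_{-1}(\mathcal M))$ and $\mathfrak u_* = \partial(\mathrm{Der}^*_{-1}(\mathcal M))$, where $\mathrm{Der}^*_{-1}$ is the subspace of degree $-1$ derivations vanishing on $\mathcal M^1$, so in particular both are linear subspaces of $\mathrm{Der}_0(\mathcal M)$. Since $d^2=0$ we have $\partial^2 = \mathrm{ad}_{d^2}=0$, and $\partial$ is a graded derivation of the bracket, so for degree $-1$ derivations $i,j$ the graded Jacobi identity gives $[\partial i,\partial j] = \partial[i,\partial j]$, again a $\partial$-boundary; hence $\mathfrak u$ is a Lie subalgebra of $\mathrm{Der}_0(\mathcal M)$. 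For $\mathfrak u_*$ I would additionally use that minimality of $\mathcal M$ forces $d(\mathcal M^1)\subseteq\Lambda^2\mathcal M^1$, so that $\partial j$, and then $[i,\partial j]$, vanish on $\mathcal M^1$ whenever $i$ and $j$ do; thus $\mathfrak u_*$ is a Lie subalgebra as well.

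For nilpotence I would first reduce to the case $\mathcal M$ finitely generated --- the situation of interest, e.g. when $X$ is a Postnikov section of a space of finite type --- so that every $\mathcal M^n$ is finite dimensional and therefore $\mathrm{Der}_0(\mathcal M)$, a fortiori $\mathfrak u$, is finite dimensional. The core of the argument is to show that every $\theta\in\mathfrak u$ is a locally nilpotent derivation of $\mathcal M$, proved by induction on the cohomological degree $n$ that $\theta$ restricts to a nilpotent operator on $\mathcal M^n$. Minimality ensures that the subspace of decomposables $D^n = (\mathcal M^+\cdot \mathcal M^+)^n$ is $\theta$-invariant with $\mathcal M^n/D^n\cong V^n$; on $D^n$, which is a quotient of $\bigoplus_{a+b=n}\mathcal M^a\otimes\mathcal M^b$ (with $1\le a,b<n$), the derivation acts as $\theta\otimes1+1\otimes\theta$, a sum of commuting nilpotent operators by the inductive hypothesis, hence is nilpotent; and on $\mathcal M^n/D^n\cong V^n$, expanding $\theta = di+id$ and using minimality again identifies the induced operator, up to sign, with the action on $\pi_n(X)\otimes\Q$ of the element of the Malcev Lie algebra $\pi_1(X)\otimes\Q$ determined by $i|_{\mathcal M^1}$ --- and this is nilpotent precisely because $X$ is a nilpotent space. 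Since a derivation that is nilpotent on an invariant subspace and on the quotient is nilpotent, this closes the induction. (For $\theta\in\mathfrak u_*$ the induced operator on each $V^n$ simply vanishes, so $\theta(V)\subseteq\mathcal M^+\cdot\mathcal M^+$ and $\theta$ strictly raises the word-length filtration, making local nilpotence immediate.)

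To conclude I would argue as follows. For $\theta\in\mathfrak u$, nilpotence of $\theta|_{\mathcal M^n}$ implies nilpotence of $\mathrm{ad}_{\theta|_{\mathcal M^n}}$ on $\mathfrak{gl}(\mathcal M^n)$; since a degree $0$ derivation is determined by its restrictions to the $\mathcal M^n$, an eigenvector of $\mathrm{ad}_\theta$ on $\mathfrak u$ with nonzero eigenvalue (over $\overline{\Q}$) would restrict to a nonzero eigenvector with the same eigenvalue on some $\mathfrak{gl}(\mathcal M^n)$, which is impossible. Hence $\mathrm{ad}_\theta$ is a nilpotent operator on the finite dimensional Lie algebra $\mathfrak u$ for every $\theta$, so $\mathfrak u$ is nilpotent by Engel's theorem; and a Lie subalgebra of a nilpotent Lie algebra is nilpotent, so $\mathfrak u_*$ is nilpotent as well.

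I expect the main obstacle to be the inductive identification, on indecomposables, of the operator induced by $\theta = [i,d]$ with the holonomy action of the nilpotent minimal model --- that is, reconciling our definition of $\mathfrak u$ through degree $-1$ derivations with the principal-refinement structure of the Postnikov tower that encodes nilpotence of $X$ --- together with the care needed for the sign conventions in the graded Lie algebra of derivations and for the degenerate base case $n=1$, where $\mathcal M^0=\Q$ and the relevant operator is the coadjoint action of $\pi_1(X)\otimes\Q$ on its dual. The reduction to a finitely generated model also deserves a careful word, since for a general finite complex the full minimal model is not finitely generated.
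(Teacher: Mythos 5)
Your bracket-closure argument is the paper's, repackaged: the paper directly verifies $[d,di+id]=0$ and $[di+id,dj+jd]=dk+kd$ with $k=[i,jd+dj]$, which is exactly your $\partial^2=0$ together with the graded Leibniz rule for $\partial=\mathrm{ad}_d$. The nilpotence arguments, however, are genuinely different. The paper filters $\mathcal M$ by assigning a generator $x$ the weight $\deg x+1$ and extending multiplicatively: minimality makes $d$ raise weight by at least $2$ while a degree~$-1$ derivation $i$ lowers it by at most $1$, so $\theta=di+id$ strictly raises weight, and since weight is bounded by $2n$ in degree $n$ the operator $\theta$ is nilpotent. You instead run an induction over cohomological degree, splitting $\mathcal M^n$ into decomposables (handled by the inductive hypothesis) and indecomposables $V^n$ (handled by identifying the induced operator with the Malcev action of $i|_{\mathcal M^1}\in\pi_1(X)\otimes\Q$ on $\pi_n(X)\otimes\Q$), and then close with Engel's theorem. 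Your route is much longer, but, as you yourself suspected, it is more careful in degree~$1$: there $i(V^1)\subseteq\mathcal M^0=\Q$ has weight $0$, so $i$ drops weight by $2$, and $\theta$ only preserves weight on $V^1$. For $\mathfrak u$ the nilpotence of $\theta|_{V^1}$ is exactly the nilpotence of $\mathrm{ad}_{i|_{V^1}}$ on $\pi_1(X)\otimes\Q$, which the weight argument does not by itself deliver (on the non-nilpotent model $\Lambda(x,y)$ with $|x|=|y|=1$, $dx=0$, $dy=xy$, taking $i(x)=1$, $i(y)=0$ gives $\theta(y)=y$); your Malcev step is precisely what supplies it, so your caution about the base case $n=1$ is well placed. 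For $\mathfrak u_*$ the issue disappears (as you observe, $\theta$ kills $V^1$ and then strictly raises word length) and both approaches close cleanly. The one step you leave genuinely unproved, and flag, is the identification of the $V^n$-action with the holonomy action; it does hold, but it is where the real work lives, and you would also want to say a word about why $\mathrm{ad}_\theta$ being nilpotent on each $\mathfrak{gl}(\mathcal M^n)$ propagates to the subalgebra $\mathfrak u$, which your eigenvalue-restriction argument handles correctly.
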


\begin{proof}
	First $$[d, di + id] = ddi + did - did - idd = 0.$$
	Then using this $$[di + id, dj + jd] = d\underbrace{[i, jd + dj]}_{k} + \underbrace{[i, jd + dj]}_{k}d$$
	so $\mathfrak u$ is closed under commutators. Now suppose that $i$ and $j$ vanish on all degree $1$ elements and $x$ is such an element. Then $$k(x) = (ijd + idj - jdi - dji)(x) = ijd(x).$$
	The minimality of $\mathcal M$ gives that $d(x)$ is a linear combination of products of elements in degree $1$, so $ijd(x) =0$. This gives that $\mathfrak u_*$ is closed under commutators. Since $\mathfrak u$ and $\mathfrak u_*$ are also closed under taking sums we get they are Lie algebras.
			
	Define the weight of a generator $x$ of $\mathcal M$ to be $\deg (x) + 1$. Extend this to monomials additivitely. We can filter $\mathcal M$ by considering elements of weight at least $k$. Now $d$ increases the weight by at least $2$ because $\mathcal M$ is minimal. Further $i$ decreases the weight of an element by at most $1$. Then $di + id$ increases the weight of an element by at least $1$ while preserving the degree, but in a given degree $d$ the maximum weight possible is $2d$, so $di+ di$ is nilpotent. 
\end{proof}

We will now write $U$ for $\exp(\mathfrak u)$ and $U_*$ for $\exp(\mathfrak u_*)$. Because $\mathfrak u$ consists of nilpotent elements, the exponential of an element in $\mathfrak u$ has finitely many non-zero terms. Since elements of $\mathfrak u $ and $\mathfrak u_*$ commute with $d$, $U$ and $U_*$ are comprised of CDGA maps $\mathcal M \rightarrow \mathcal M$. Our next goal will be to show that $U$ and $U^*$ are exactly the subgroups of $\Aut(\mathcal M)$ of elements which are homotopic to the identity and augmentation preserving homotopic to the identity respectively. It is worth remarking that even though $\mathcal M$ has only one choice of augmentation, since $\mathcal M(0) \iso \Q$, this does not mean that all homotopies are augmentation preserving.    

\begin{proposition}
	Elements of $U$ are homotopic to the identity. Moreover, elements of $U_*$ are augmentation preserving homotopic to the identity.  
\end{proposition}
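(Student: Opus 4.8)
The plan is to build an explicit homotopy using the standard CDGA cylinder $\mathcal M \otimes \Lambda(t, dt)$ with $\deg t = 0$, and to realize the flow generated by a derivation of the form $[i,d]$ as a composite of "linear" homotopies. Concretely, given $\phi = \exp([i,d]) \in U$, I would first reduce to the case where $\phi = \exp([i,d])$ with $i$ a single degree $-1$ derivation, and then interpolate. The key observation is that the path $t \mapsto \exp(t\,[i,d])$ is a one-parameter family of CDGA automorphisms of $\mathcal M$ (each term is a CDGA map since $[i,d]$ commutes with $d$ and is a derivation, hence its exponential is an algebra map commuting with $d$), starting at $\id$ and ending at $\phi$. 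This is exactly the data of a CDGA map $H \colon \mathcal M \to \mathcal M \otimes \Lambda(t,dt)$ provided one checks that the $t$-dependence is polynomial — which it is, because $[i,d]$ is nilpotent so $\exp(t[i,d])$ is a polynomial in $t$ with coefficients in $\End(\mathcal M)$, and similarly the $dt$-component is governed by $i\,e^{t[i,d]}$, again polynomial in $t$.

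The steps, in order: (1) Fix $i$ a degree $-1$ derivation and set $D = [i,d] = di+id$, which is nilpotent, commutes with $d$, and is a derivation of $\mathcal M$. (2) Define $H \colon \mathcal M \to \mathcal M[t] \oplus \mathcal M[t]\,dt$ by $H(x) = e^{tD}(x) + \big(i\,e^{tD}(x)\big)\,dt$; here I should double-check the exact formula so that $d_{\text{cyl}} \circ H = H \circ d$, using $d_{\text{cyl}}(a + b\,dt) = da + (-1)^{|a|}\tfrac{\partial a}{\partial t}\,dt - db\wedge dt$ (signs to be pinned down), together with $dD = Dd$ and $Di + iD$ computations. (3) Verify $\mathrm{ev}_0 \circ H = \id$ and $\mathrm{ev}_1 \circ H = e^{D} = \phi$, so $H$ witnesses $\phi \simeq \id$. (4) For a general element of $U$, write it as $\exp(D_1)$ for a single $D_1 = [i_1,d] \in \mathfrak u$ if the exponential map is surjective onto $U$ (which it is, since $U = \exp(\mathfrak u)$ by definition), so step (2)–(3) already suffice; alternatively chain homotopies since $\simeq$ is an equivalence relation on CDGA maps. (5) For $U_*$: if $i$ vanishes on $\mathcal M^1$, then the $dt$-component $i\,e^{tD}(x)$ vanishes after applying the augmentation $\varepsilon$ (because $\varepsilon \circ i = 0$ on generators, $i$ being a derivation killing degree $1$ and raising... here one uses that $\varepsilon$ only sees degree $0$, so one needs $i e^{tD}(x)$ to land in the augmentation ideal), hence $(\varepsilon \otimes \id)\circ H$ is the constant homotopy and $H$ is augmentation-preserving.

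The main obstacle I anticipate is bookkeeping the signs and verifying that $H$ as written is genuinely a chain map into the cylinder — the interaction between the internal differential $d$, the derivation $D$, the "contracting" operator $i$, and the $dt$-part has to be arranged so that the Leibniz rule and $d^2 = 0$ on the cylinder are respected; getting the formula for the $dt$-component exactly right (it is essentially the "primitive" $\int_0^t i e^{sD}\,ds$ rescaled, but since everything is polynomial this is algebraic) is where a naive guess may fail. A secondary subtlety is confirming in step (5) that the augmentation-vanishing of $i$ on $\mathcal M^1$ propagates correctly: one must check $i e^{tD}(x)$ lies in the augmentation ideal for all $x$, which follows because $e^{tD}$ preserves the augmentation ideal (it is a CDGA, hence augmentation-compatible? — no, not automatically, so instead argue directly that $D$ preserves the augmentation ideal and that $i$ applied to anything lands in positive degrees when $i$ kills $\mathcal M^1$, using minimality as in the previous proposition). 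Once these two points are nailed down, the rest is the routine verification that $\mathrm{ev}_0, \mathrm{ev}_1$ recover $\id$ and $\phi$.
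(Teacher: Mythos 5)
Your explicit-formula approach is in the same spirit as the paper's (both build the homotopy on the cylinder $\mathcal M\otimes\Q(t,dt)$) but genuinely different in execution, and it can be made to work; however, two of the items you flag as ``to be checked'' really do require work, and your proposed fix for one of them is wrong. First, with the standard Koszul sign on the cylinder differential, $H(x)=e^{tD}(x)+\bigl(ie^{tD}(x)\bigr)\,dt$ is \emph{not} a chain map; you need the degree-dependent sign $H(x)=e^{tD}(x)+(-1)^{|x|+1}\bigl(ie^{tD}(x)\bigr)\,dt$. Your fallback guess that the $dt$-component should be the primitive $\int_0^t ie^{sD}\,ds$ is a red herring and also does not give a chain map; the formula really is just the signed $ie^{tD}(x)$. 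Second, your observation that $\exp(t[i,d])$ is an algebra map addresses only the $\mathcal M[t]$-component of $H$; you still have to verify that the \emph{whole} map $H$, $dt$-part included, is multiplicative. It is, but this is a nontrivial Koszul-sign computation and cannot be waved away.

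The paper sidesteps both verifications: promote $i$ to a degree-$(-1)$ derivation $I$ of the cylinder with $I(x)=t\,i(x)$ and $I(t)=I(dt)=0$, and set $H=\exp\bigl([I,d_{\mathrm{cyl}}]\bigr)\circ(\id_{\mathcal M}\otimes 1)$. Since $[I,d_{\mathrm{cyl}}]$ is a nilpotent degree-$0$ derivation commuting with $d_{\mathrm{cyl}}$, its exponential is \emph{automatically} a CDGA automorphism; no chain-map or multiplicativity check is needed, and the $\Q[t]$-component of $\exp([I,d_{\mathrm{cyl}}])(x)$ is $e^{tD}(x)$, giving the right endpoints. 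Interestingly, the paper's $H$ and your signed $H$ are \emph{different} CDGA homotopies (they already disagree at order $t$ in the $dt$-component, since $iD\neq\tfrac12(iD+Di)$ in general), so there is no unique formula to guess and the derivation route is the one that lets you avoid choosing. For the based statement your step (5) has the right idea but a wrong intermediate claim: $i$ applied to a degree-$1$ element lands in degree $0$, not ``positive degree''. The correct point is that for a degree-$1$ generator $x$, minimality forces $dx$ to be decomposable into products of degree-$1$ generators, so $D(x)=d(ix)+i(dx)=0$ when $i$ kills $\mathcal M^1$; hence $iD^k(x)=0$ for all $k$ and the $dt$-component of $H(x)$ lies in $\overline{\mathcal M}\otimes\Q[t]$.
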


\begin{proof}
	Here $\Q(t,dt)$ will denote the free CDGA on $t$ and $dt$ with $d(t) = dt$ and $d(dt) = 0$ where $\deg(t) = 0$. For a derivation $i$ of degree $-1$ define the degree $-1$ derivation
	$$I \colon \mathcal M \otimes \Q(t,dt) \rightarrow \mathcal M \otimes \Q(t,dt)$$
	where for $x \in \mathcal M$ we take $I(x) = ti(x)$ and $I(t) = 0$. Then the composition $$\mathcal M \xrightarrow{\id_{\mathcal M} \otimes 1} \mathcal M \otimes \Q(t,dt) \xrightarrow{\exp([I,d_{\mathcal M \otimes \Q(t,dt)}])} \mathcal M \otimes \Q(t,dt)$$
	provides a homotopy from $\id_\mathcal M$ to $\exp([i,d])$. This shows that elements of $U$ are homotopic to the identity. 
	
	Suppose further that $i$ vanishes on degree $1$ elements. Then we must check that the above is an augmentation preserving homotopy, that is we must check that the image of the homotopy is contained in $(\overline{\mathcal M} \otimes \Q(t,dt)) \oplus \Q$, where $\overline{\mathcal M}$ is the augmentation ideal of $\mathcal M$. We will show that $[I,d_{\mathcal M \otimes \Q(t,dt)}]x$ lands in $(\overline{\mathcal M} \otimes \Q(t,dt)) \oplus \Q$ for $x$ a degree $n$ generator of $\mathcal M$. We compute $$[I, d_{\mathcal M \otimes \Q(t,dt)}]x = t i(dx) + i(x)dt + t di(x) = t(id + di)x + i(x)dt.$$ For the first term on the right-hand side $\deg([i,d]x) \neq 0$ and so $t[i,d]x \in (\overline{\mathcal M} \otimes \Q(t,dt)) \oplus \Q$. Now for the second term, if $n \neq 1$ then $\deg(i(x)) \neq 0$ so $i(x) dt \in (\overline{\mathcal M} \otimes \Q(t,dt)) \oplus \Q$. Then if $n = 1$ then $i(x) = 0$ using our assumption on $i$, so $i(x)dt \in (\overline{\mathcal M} \otimes \Q(t,dt)) \oplus \Q$. 
	\end{proof}

\begin{proposition}
	If a map $f\colon \mathcal M \rightarrow \mathcal M$ is homotopic to the identity then $f \in U$. Moreover, if $f$ is augmentation preserving homotopic to the identity then $f \in U_*$. 
\end{proposition}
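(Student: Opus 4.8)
The plan is to read the required derivation directly off a homotopy. Fix a CDGA homotopy $H\colon \mathcal M \to \mathcal M \otimes \Q(t,dt)$ with $\mathrm{ev}_0 \circ H = \id_{\mathcal M}$ and $\mathrm{ev}_1 \circ H = f$, and write $H(x) = \alpha_x(t) + \beta_x(t)\,dt$ with $\alpha_x(t),\beta_x(t) \in \mathcal M \otimes \Q[t]$. Because $H$ is an algebra map, $x \mapsto \alpha_x(t)$ is a CDGA endomorphism $\hat\alpha$ of $\mathcal M \otimes \Q[t]$ (so $\mathrm{ev}_{t_0}\circ\hat\alpha = H_{t_0} := \mathrm{ev}_{t_0}\circ H$), and $\tilde\beta(x) := (-1)^{|x|}\beta_x(t)$ is a degree $-1$ derivation of $\mathcal M \otimes \Q[t]$ over $\hat\alpha$; because $H$ is a chain map, $\hat\alpha d = d\hat\alpha$ together with the identity $\partial_t\alpha_x(t) = (-1)^{|x|}\bigl(\beta_{dx}(t) - d\beta_x(t)\bigr)$. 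The aim is to produce a degree $-1$ derivation $i$ with $\exp([i,d]) = f$, with $i$ vanishing on the degree-one generators in the augmentation-preserving case.

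The first point is that any such $H$ passes through equivalences only. For rational $t_0$ the map $H_{t_0}$ is homotopic to $\id_{\mathcal M}$ (restrict $H$ to $[0,t_0]$), hence a quasi-isomorphism, hence --- since $\mathcal M$ is minimal --- an isomorphism; as the determinant of the induced map on the indecomposables of $\mathcal M$ is a polynomial in $t_0$ vanishing at no rational value, it vanishes at no real value, so $H_t^{-1}$ is defined and pole-free on $[0,1]$. One can then form the logarithmic derivative $\theta_t := \hat\alpha^{-1}\circ\partial_t\hat\alpha$, a degree $0$ derivation commuting with $d$ (because $\partial_t$ and $\hat\alpha$ do). Setting $i_t := -\,\hat\alpha^{-1}\circ\tilde\beta$, a degree $-1$ derivation over the identity, a short computation from the chain-map identity above yields $\theta_t = [i_t, d]$, so $\theta_t \in \mathfrak u$ for every $t$.

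Now I would integrate. By construction $\hat\alpha$ solves the flow $\partial_t\hat\alpha = \hat\alpha\circ\theta_t$ with $\hat\alpha|_{t=0} = \id$, and $t \mapsto \theta_t$ is a pole-free path on $[0,1]$ in the nilpotent Lie algebra $\mathfrak u = \operatorname{Lie}(U)$. An integral curve of the left-invariant distribution determined by $\mathfrak u$ and starting at the identity stays in $U$; concretely, by the Magnus expansion $\log(\hat\alpha|_{t=1})$ is a finite sum --- finite because $\mathfrak u$ is nilpotent --- of iterated integrals of iterated brackets of the $\theta_t$, all of which lie in $\mathfrak u$. Hence $f = \hat\alpha|_{t=1} \in \exp(\mathfrak u) = U$, which proves the first assertion.

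For the augmentation-preserving refinement, I would first note that $f$ restricts to the identity on the sub-CDGA $\mathcal M^1$ generated by the degree-one generators (two self-maps of the minimal model of a $K(\pi,1)$ that are augmentation-preserving homotopic are equal, since based self-maps of a $K(\pi,1)$ up to based homotopy are just $\operatorname{Hom}(\pi,\pi)$). Then, extending generator by generator and using that $\mathrm{ev}_1$ is a surjective quasi-isomorphism, one may choose $H$ so that it restricts to the constant homotopy on $\mathcal M^1$, i.e. $\beta_v(t)=0$ for the degree-one generators $v$. For such $H$ the derivation $i_t$ vanishes on the degree-one generators, so $\theta_t \in \mathfrak u_* = \operatorname{Lie}(U_*)$, and the same integration gives $f \in U_*$. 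The step I expect to be most delicate is precisely this interplay with the basepoint: the plain flow argument only lands $f$ in $U$, and forcing the primitive $i_t$ into $\mathfrak u_*$ requires both the rigidity of augmentation-preserving homotopies on the one-truncation and the normalization of $H$ above; the Koszul-sign bookkeeping behind the identity $\theta_t = [i_t,d]$ (notably the twist $\tilde\beta = (-1)^{|\cdot|}\beta$) is the other point requiring care.
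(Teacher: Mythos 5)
The broad strategy is sound and does land on essentially the same conclusion, but you take a genuinely different route from the paper, and that route has real technical gaps.

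The paper simply quotes Block--Lazarev's closed-form identity
$F(1) = \exp\bigl(\bigl[\int_0^1 G F^{-1}\,dt,\,d\bigr]\bigr)$
for a homotopy $F + G\,dt$ with $F|_{t=0}=\id$, and reads off $i = \int_0^1 GF^{-1}\,dt$ directly. You instead set up the flow $\partial_t\hat\alpha = \hat\alpha\,\theta_t$ with $\theta_t = \hat\alpha^{-1}\partial_t\hat\alpha \in \mathfrak u$ (your computation $\theta_t = [i_t,d]$ with $i_t = -\hat\alpha^{-1}\tilde\beta$ is correct, and this is a nice identification) and invoke the Magnus expansion. This avoids needing the Block--Lazarev reference, which is a legitimate aim, but the self-contained version requires you to justify two things you do not actually establish.

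First, the inference \emph{``the determinant is a polynomial in $t_0$ vanishing at no rational value, so it vanishes at no real value''} is false: a rational polynomial with no rational roots can have real roots (e.g.\ $2t^2-1$). So your claim that $\hat\alpha_t^{-1}$ is pole-free on $[0,1]$ is unsupported. Second, and more fundamentally, for the Magnus terms $\Omega_n$ to lie in $\mathfrak u$ (a $\Q$-vector space) you need $\theta_t$ to be a \emph{polynomial} in $t$ with coefficients in $\mathfrak u$, so that the iterated integrals are formal and yield rational coefficients. But $\hat\alpha_t^{-1}$ is a priori only a rational function of $t$ (its entries are computed by Cramer's rule from a polynomial matrix that is unipotent only at $t=0$), hence so is $\theta_t$; an integral like $\int_0^1 \theta_t\,dt$ of a rational function can produce logarithms and does not obviously give a $\Q$-linear derivation. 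The paper's appeal to Block--Lazarev is precisely what makes these formal-integration issues disappear, and if you want a self-contained flow argument you need an explicit reason that $\hat\alpha^{-1}$ (or at least $\theta_t$) is polynomial --- for instance an argument that $\hat\alpha_t$ is unipotent with respect to the word-length filtration, which is not automatic in the non-augmentation-preserving case.

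For the augmentation-preserving refinement your route is correct in spirit but more elaborate than necessary. You observe that $f|_{\mathcal M^1}=\id$ via rigidity of based self-maps of a $K(\pi,1)$ and then \emph{choose} $H$ to be constant on $\mathcal M^1$. In fact \emph{any} augmentation-preserving homotopy automatically has $\beta_v \equiv 0$ on degree-one generators $v$: $\beta_v$ has degree $0$, so $\beta_v\,dt$ lies in $\Q[t]\,dt$, which meets $(\overline{\mathcal M}\otimes\Q(t,dt))\oplus\Q$ only in $0$. The paper then finishes in one line: since $F^{-1}$ preserves degree and degree-one elements of a minimal model are spanned by generators, $GF^{-1}$ also kills degree-one generators, so $i\in\mathfrak u_*$. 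No normalization of $H$ is required.
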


\begin{proof}
	Suppose that we have a homotopy $F + Gdt \colon \mathcal M \rightarrow \mathcal M \otimes \Q(t,dt)$ such that $F|_{t=0} = \id_{\mathcal M}$. We need to show that $F|_{t = 1}$ is of the form $\exp([i,d])$. To do this we use work of Block--Lazarev \cite[p. 10--11]{Block2005}, rather than Sullivan's original strategy, because their construction is better suited to the based case. They show  
	$$F(1) = \exp\Big(\Big[\textstyle\int_0^1 GF^{-1}dt, d\Big]\Big)$$
	where the integral is formal and $F^{-1}$ is a map from $\mathcal M \otimes \Q(t,dt)$ to $\mathcal M$. This immediately gives that that any map homotopic to the identity is in $U$ (this is Block--Lazarev's motivation) by taking $i = \textstyle\int_0^1 GF^{-1}dt$. Suppose further that the homotopy $F + Gdt$ is an augmentation preserving homotopy, i.e. its image is in $(\overline{\mathcal M} \otimes \Q(t,dt)) \oplus \Q$. Since $G$ lowers the degree of an element it must vanish on the degree $1$ generators of $M$. Then for $x$ a degree $1$ generator of $\mathcal M$ we get $\left(\int_0^1GF^{-1}dt \right) (x) = 0$ because $F$ preserves degree.
\end{proof}

We can now build the algebraic group in which $\pi_0 \hAut_*(X)$ is arithmetic. Let $\Aut(\mathcal M)$ denote the group of all CDGA automorphisms and let ${\sim}$ and $\sim_*$ denote the equivalence relations of CDGA homotopy and augmentation preserving CDGA homotopy.

\begin{proposition}\label{AlgebraicityCorollary}
	$\Aut(\mathcal M) / {\sim}$ and $\Aut(\mathcal M) / {\sim}_*$ are algebraic groups. 
\end{proposition}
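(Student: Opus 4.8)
The plan is to realize $\Aut(\mathcal M)$ as the rational points of an affine algebraic group, and then to show that both $\sim$ and $\sim_*$ are the equivalence relations of orbits under an algebraic (in fact unipotent) subgroup, so that the quotients are again algebraic groups. First I would set up $\Aut(\mathcal M)$ as an algebraic group: since $X$ is finite nilpotent, the minimal model $\mathcal M$ is generated by a finite-dimensional graded vector space in each degree, and (because $X$ is a finite space, or at worst we truncate) we may work with a finitely generated $\mathcal M$; a CDGA automorphism is determined by where it sends the finitely many generators, the conditions ``commutes with $d$'' and ``multiplicative'' are polynomial equations in the matrix entries, and invertibility is handled in the usual way by adjoining the inverse of the determinant. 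Thus $\Aut(\mathcal M)$ is naturally the $\Q$-points of an affine algebraic group $\mathbf{Aut}(\mathcal M)$.

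Next I would invoke the three preceding propositions, which together identify $U$ (resp.\ $U_*$) with exactly the subgroup of $\Aut(\mathcal M)$ consisting of automorphisms homotopic (resp.\ augmentation-preserving homotopic) to the identity. So the quotient sets $\Aut(\mathcal M)/{\sim}$ and $\Aut(\mathcal M)/{\sim_*}$ are literally the coset spaces $\Aut(\mathcal M)/U$ and $\Aut(\mathcal M)/U_*$. Moreover $U$ and $U_*$ are normal in $\Aut(\mathcal M)$: if $f \in \Aut(\mathcal M)$ and $\exp([i,d]) \in U$, then $f \exp([i,d]) f^{-1} = \exp([f i f^{-1}, d])$ since $f$ commutes with $d$ and conjugation is a Lie algebra automorphism of the derivations, and $f i f^{-1}$ is again a degree $-1$ derivation (vanishing on $\mathcal M^1$ in the $U_*$ case, since $f$ preserves the degree filtration). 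Hence $\Aut(\mathcal M)/{\sim}$ and $\Aut(\mathcal M)/{\sim_*}$ are honest quotient \emph{groups}, not just coset spaces.

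To conclude that these quotients are algebraic groups, I would check that $U$ and $U_*$ are \emph{closed} normal algebraic subgroups of $\mathbf{Aut}(\mathcal M)$ and then quote the standard fact that the quotient of an affine algebraic group by a closed normal subgroup is again an affine algebraic group (over a field of characteristic zero; see e.g.\ Borel or Milne's \emph{Algebraic Groups}). That $U$ and $U_*$ are algebraic subgroups follows from the exponential description: the nilpotent Lie algebras $\mathfrak u$ and $\mathfrak u_*$ are cut out inside the (finite-dimensional, in each relevant degree) space of degree $0$ derivations by the linear/polynomial conditions defining $\{[i,d] : \deg i = -1\}$, and $\exp$ is a polynomial isomorphism onto its image because every element is nilpotent; thus $U,U_* \subseteq \mathbf{Aut}(\mathcal M)$ are unipotent algebraic subgroups, in particular closed.

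The main obstacle I anticipate is the finite-dimensionality/finite-generation bookkeeping: $\mathcal M$ is not finite-dimensional as a vector space, so ``$\Aut(\mathcal M)$ is algebraic'' must be interpreted correctly — either by truncating to a finite Postnikov stage (which is what the induction in the appendix will do anyway, so this is natural) or by noting that an automorphism, derivation, or homotopy is controlled entirely by its effect on the finitely many algebra generators, so all the relevant varieties are finite-dimensional even though $\mathcal M$ itself is not. Once that is pinned down, the argument is formal: algebraicity of $\Aut(\mathcal M)$, normality of the unipotent subgroups $U, U_*$, and the quotient theorem.
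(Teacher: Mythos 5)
Your approach matches the paper's: truncate, observe that $\Aut(\mathcal M)$ is algebraic because $\mathcal M$ is finitely generated, identify $U$ and $U_*$ as the unipotent normal subgroups of maps (augmentation-preserving) homotopic to the identity, and quote the quotient theorem. Two things to fix.

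First, your fallback remark that one could avoid truncation ``by noting that an automorphism, derivation, or homotopy is controlled entirely by its effect on the finitely many algebra generators'' is wrong as stated: the minimal model of a finite nilpotent space need not be finitely generated (e.g.\ $S^2\vee S^2$ is finite and simply connected but has rational homotopy in infinitely many degrees, so its minimal model has infinitely many generators). The only correct reading of ``$\Aut(\mathcal M)$ is algebraic'' is the truncated one, and the reduction to a truncation needs to be actually carried out: one needs that restriction induces isomorphisms $\Aut(\mathcal M)/{\sim}\cong\Aut(\mathcal M^n)/{\sim}$ and $\Aut(\mathcal M)/{\sim_*}\cong\Aut(\mathcal M^n)/{\sim_*}$ for $n$ at least the cohomological dimension of $X$, which the paper gets from obstruction theory. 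You flag the gap but do not close it.

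Second, a smaller point of comparison: you argue directly that $U$ and $U_*$ are closed algebraic subgroups of $\Aut(\mathcal M)$ by exhibiting them as images of the polynomial map $\exp$ on the linear subspaces $\mathfrak u,\mathfrak u_*$. This can be made to work (uniform nilpotency of the operators, plus $\log$ as a polynomial inverse on unipotents, shows $U$ is Zariski-closed), but the paper instead invokes the general fact that a group homomorphism from a unipotent algebraic group into any algebraic group is automatically a morphism of algebraic groups, which dispenses with the closedness check in one line. Either route is fine; yours just needs the ``cut out by polynomials'' step made precise, in particular that the nilpotency degree is uniformly bounded once $\mathcal M$ is finitely generated.
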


\begin{proof}
	To begin, suppose that $X$ is a Postnikov truncation of a finite nilpotent space. Then $\mathcal M$ is finitely generated so $\Aut(\mathcal M)$ is an algebraic group. The groups $U$ and $U_*$ are normal subgroups of $\Aut(\mathcal M)$ (conjugation preserves maps which are (augmentation preserving) homotopic to the identity). Since $U$ and $U_*$ are the exponentials of nilpotent Lie algebras, they are unipotent algebraic groups. Moreover, they are algebraic subgroups of $\Aut(\mathcal M)$ by \cref{SpecialAlgebraicGroupFact}. So, the quotients $$\Aut(\mathcal M) /U \iso \Aut(\mathcal M) / {\sim} \hspace{1cm} \Aut(\mathcal M) /U_* \iso \Aut(\mathcal M) / {\sim}_*$$ are algebraic groups \cite[\S 12.1]{Humphreys1975}.
	
	Now for $X$ which is not a Postnikov truncation, obstruction theory gives the equivalences 
	$$\Aut(\mathcal M) /{\sim} \iso \Aut(\mathcal M^n) / {\sim} \hspace{1cm} \Aut(\mathcal M) /{\sim_*} \iso \Aut(\mathcal M^n) / {\sim}_*$$
	where $n$ is the cohomological dimension of $X$ and $\mathcal M^n$ is the minimal model of the $n^{\text{th}}$ Postnikov truncation of $X$.
\end{proof}

\begin{proposition}\label{CohomologyAction}
	For $V$ a finite dimensional vector space, the actions of \linebreak $\Aut(\mathcal M)/{\sim}$ and $\Aut(\mathcal M)/{\sim}_*$ on $H^*(\mathcal M; V)$ are algebraic.
\end{proposition}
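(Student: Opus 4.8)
The plan is to reduce the statement to the tautological linear action of $\Aut(\mathcal M)$ on a finite-dimensional cochain complex, and then read off algebraicity from explicit polynomial formulas. First I would pass to a Postnikov truncation exactly as in the proof of \cref{AlgebraicityCorollary}: replacing $X$ by its $n$-th Postnikov section, where $n$ is the cohomological dimension, makes $\mathcal M = \Lambda W$ finitely generated, gives $\Aut(\mathcal M)/{\sim} \iso \Aut(\mathcal M^n)/{\sim}$ (and similarly for $\sim_*$), and identifies the action on $H^*(\mathcal M; V) = H^{\le n}(\mathcal M; V)$ with the action on $H^{\le n}(\mathcal M^n; V)$. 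So I may assume $\mathcal M$ is free on a finite-dimensional graded vector space $W$, and it is enough to show that for each $i$ the homomorphism $\Aut(\mathcal M)/{\sim} \to \mathrm{GL}(H^i(\mathcal M; V))$ (and the $\sim_*$ analogue) is a morphism of algebraic groups, with $H^i(\mathcal M; V) = H^i(\mathcal M) \otimes V$ finite-dimensional.

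Next I would descend through the quotient. Since a CDGA map homotopic to the identity acts trivially on cohomology, the homomorphism $\phi \mapsto \phi^*$ kills both $U$ and $U_*$; by the universal property of the algebraic quotient \cite[\S 12.1]{Humphreys1975}, a morphism of algebraic groups out of $\Aut(\mathcal M)$ killing $U$ (resp. $U_*$) factors uniquely through a morphism out of $\Aut(\mathcal M)/{\sim}$ (resp. $\Aut(\mathcal M)/{\sim}_*$), and that factorization is precisely the action map in question. So it suffices to prove that the composite $\Aut(\mathcal M) \to \mathrm{GL}(H^i(\mathcal M; V))$ is a morphism of algebraic groups. Since this composite is $\phi \mapsto \phi^* \otimes \mathrm{id}_V$ and $A \mapsto A \otimes \mathrm{id}_V$ is a polynomial map $\mathrm{GL}(H^i(\mathcal M)) \to \mathrm{GL}(H^i(\mathcal M) \otimes V)$, it remains only to see that $\Aut(\mathcal M) \to \mathrm{GL}(H^i(\mathcal M))$, $\phi \mapsto \phi^*$, is algebraic.

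Finally I would check this by hand. A CDGA automorphism of $\mathcal M = \Lambda W$ is determined by the tuple $(\phi(w))_w$ of images of the finitely many generators, exhibiting $\Aut(\mathcal M)$ as a closed subgroup of the affine space $\prod_w \mathcal M^{\deg w}$; the induced action on each finite-dimensional piece $\mathcal M^k$ is given in the monomial basis by polynomials of bounded degree in these coordinates, hence defines a morphism $\Aut(\mathcal M) \to \mathrm{GL}(\mathcal M^k)$. The differentials $d \colon \mathcal M^k \to \mathcal M^{k+1}$ are $\Aut(\mathcal M)$-equivariant by definition, so the cocycles $Z^k$ and coboundaries $B^k$ are invariant subspaces of algebraic representations; the action on a subrepresentation, and on a quotient by an invariant subspace, is again algebraic, so $\Aut(\mathcal M) \to \mathrm{GL}(H^k(\mathcal M)) = \mathrm{GL}(Z^k/B^k)$ is a morphism, and assembling over $k \le n$ and tensoring with $V$ completes the proof. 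I expect the only real subtlety to be the opening reduction: $\Aut(\mathcal M)$ is genuinely an algebraic group (rather than pro-algebraic) only after truncation, and one must ensure the cohomology action is transported intact — but this is the same bookkeeping already carried out for \cref{AlgebraicityCorollary}, so it poses no new difficulty.
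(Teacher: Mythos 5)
Your proposal is correct and follows essentially the same path as the paper's proof: reduce to a finitely generated model via Postnikov truncation, observe that elements of $U$ and $U_*$ induce the identity on cohomology so the action descends through the algebraic quotient by the universal property, and verify directly that $\Aut(\mathcal M)\to\mathrm{GL}(H^k(\mathcal M))$ is a morphism of algebraic groups because it is the restriction-to-$Z^k$-then-quotient-by-$B^k$ of a polynomial representation on $\mathcal M^k$. The only cosmetic difference is in how the passage from $\mathcal M$ to $\mathcal M^n$ is packaged: the paper writes down the explicit restriction map $\Aut(H^*(\mathcal M^n))\to\Aut(H^*(\mathcal M))$ and checks the composite is algebraic, while you fold it into the opening reduction by working with $H^{\le n}$ throughout; both amount to the same bookkeeping.
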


\begin{proof}
Here $H^*(\mathcal M; V) \cong H^*( \mathcal M) \otimes V$ and so we can disregard the coefficients. 

First suppose that $X$ is a Postnikov truncation of a finite nilpotent space. The map $\Aut(\mathcal M) \to \Aut(H^*(\mathcal M))$ is given by taking an automorphism, restricting to the kernel of $d$, and then taking the automorphism on quotients, all of which is algebraic (here by $\Aut(H^*(\mathcal M))$ we mean the automorphisms as a graded algebra). Finally the actions of $\Aut(\mathcal M)/{\sim}$ and $\Aut(\mathcal M)/{\sim}_*$ on $\Aut(H^*(\mathcal M))$ exist and are algebraic by the universal property of the quotient \cite[\S 12.1]{Humphreys1975} for algebraic groups.

Now for $X$ which is not a Postnikov truncation, using the Serre spectral sequence we have that $H^k(\mathcal M)$ agrees with $H^k(\mathcal M^n)$ for $k \leq n$ where $\mathcal M^n$ is the minimal model of the $n^{\text{th}}$ Postnikov truncation of $X$. Picking $n$ to be the cohomological dimension of $X$ we have a map $\Aut(H^*(\mathcal M^n)) \to \Aut(H^*(\mathcal M))$ given by setting any elements of degree higher than $n$ to $0$. This is map algebraic and so the action map given by the composition $$\Aut(\mathcal M^n)/{\sim} \cong \Aut(\mathcal M^n)/{\sim} \to \Aut(H^*(\mathcal M^n)) \to \Aut(H^*(M))$$
is also algebraic (here ${\sim}$ can be replaced with ${\sim_*}$). 
 
\end{proof}

This completes the construction of the algebraic group which we will map our based self-equivalences into. To study this map we will use the following obstruction theory result.

\begin{fact}[Nomura {\cite[Theorem 1.3]{Nomura1966}}]\label{ObstructionTheory}
Suppose we have a principal fibre sequence of based spaces, i.e. a commutative diagram
\[\begin{tikzcd}[]
	F & E & B \\
	{\Omega B'} & {F'} & {E'} & {B'}
	\arrow[from=1-1, to=1-2]
	\arrow["\simeq", from=1-1, to=2-1]
	\arrow[from=1-2, to=1-3]
	\arrow["\simeq", from=1-2, to=2-2]
	\arrow["\simeq", from=1-3, to=2-3]
	\arrow[from=2-1, to=2-2]
	\arrow[from=2-2, to=2-3]
	\arrow[from=2-3, to=2-4]
\end{tikzcd}.\]
Further suppose that $\pi_k(F) = 0$ for $k < n$ and $\pi_k(B) = 0$ for $k \geq  n$. Then there is an exact sequence  $$ \ker ([E,F] \rightarrow [F,F]) \rightarrow \pi_0\hAut_* (E) \rightarrow \pi_0\hAut_*(F) \times \pi_0 \hAut_*(B).$$
\end{fact}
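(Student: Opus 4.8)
The plan is to read the principal structure as a twisting action and then run the classical difference construction. The diagram exhibits $E$ as the (homotopy) fibre of a map $k\colon B \to B'$ with $F \simeq \Omega B'$, so $p\colon E \to B$ is a principal $\Omega B'$-bundle classified by $k$ and carries a based, fibrewise action $\mu\colon E \times \Omega B' \to E$ over $B$. The two numerical hypotheses say exactly that $F$ is $(n-1)$-connected and $B$ is $(n-1)$-truncated; consequently $p$ is $n$-connected. I would record the two facts this buys us and which are used repeatedly: precomposition with $p$ is a bijection $[B,B] \xrightarrow{\ \sim\ } [E,B]$ (both sides compute maps into the $(n-1)$-truncation, and $\tau_{\le n-1}p$ is an equivalence), and $[F,\Omega B] = \ast$ (maps out of the $(n-1)$-connected space $F$ are seen only above degree $n-1$, while $\Omega B$ is $(n-2)$-truncated, so every relevant obstruction group vanishes).

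Next I would construct the homomorphism $q\colon \pi_0\hAut_*(E) \to \pi_0\hAut_*(F)\times\pi_0\hAut_*(B)$. Given $\phi$, the bijection above yields a unique $\phi_B\in[B,B]$ with $\phi_B\circ p \simeq p\circ\phi$; it is an equivalence because it agrees with $\phi$ on $\pi_{<n}$ and $B$ has no homotopy above. Any choice of such a homotopy makes $(\phi_F,\phi,\phi_B)$ a map of fibre sequences, hence produces $\phi_F\in\hAut_*(F)$ (an equivalence by the five lemma applied to the long exact sequences of the fibrations), and its based homotopy class is independent of the choice because two choices differ by an element of $[F,\Omega B]=\ast$. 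Both assignments are multiplicative, so $q$ is a homomorphism sending the identity to the identity.

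The core of the proof is the difference construction. Writing $E = B\times_{B'}PB'$, a based self-map of $E$ over $B$ is the same datum as a based lift of $k\circ p\colon E\to B'$ through the path fibration $PB'\to B'$; since $k\circ p$ is canonically null (this is how $E$ was defined), the set of such lifts is a torsor over $[E,\Omega B'] = [E,F]$, with $\mathrm{id}_E$ corresponding to the tautological lift and $\mu(\mathrm{id}_E,g)$ corresponding to the $g$-translate of it. Hence every $\phi$ over $B$ is homotopic over $B$ to $\mu(\mathrm{id}_E,g)$ for a unique $g=\delta(\phi)\in[E,F]$. A direct check identifies the restriction of $\mu(\mathrm{id}_E,g)$ to the fibre, under $F\cong\Omega B'$, with $\mathrm{id}_F\cdot(g|_F)$ in the group $[F,F]$; so $\mu(\mathrm{id}_E,g)$ is a self-equivalence inducing $\mathrm{id}_F$ on the fibre exactly when $g|_F\simeq\ast$, i.e. $g\in\ker([E,F]\to[F,F])$, and we set $d(g)=[\mu(\mathrm{id}_E,g)]$. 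Exactness at $\pi_0\hAut_*(E)$ is then bookkeeping: $q\circ d$ is trivial by the fibre computation together with $\mu$ lying over $B$; and if $q(\phi)$ is trivial then $\phi_B\simeq\mathrm{id}_B$ lets us replace $\phi$ by a map strictly over $B$, which is homotopic over $B$ to $\mu(\mathrm{id}_E,\delta(\phi))$ with $\delta(\phi)|_F\simeq\ast$ forced by $\phi_F\simeq\mathrm{id}_F$, so $d(\delta(\phi))=[\phi]$. The step I expect to be most delicate is not isolated in one place but is the basepoint and choice-of-homotopy bookkeeping that makes $\phi_F$ well defined and makes the torsor identification respect basepoints; this is precisely where the hypotheses $\pi_{<n}F=0$ and $\pi_{\ge n}B=0$ are spent, through the vanishing of $[F,\Omega B]$ and the bijectivity of $p^{*}$.
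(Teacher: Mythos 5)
The paper states this result as a Fact and defers to Nomura \cite[Theorem 1.3]{Nomura1966} without reproducing an argument, so there is no internal proof to compare yours against; I will assess it on its own. Your argument is sound and is the standard obstruction-theoretic route, which is almost certainly Nomura's own. The diagram does exhibit $E$ as the fibre of a classifying map $k\colon B\to B'$ with $F\simeq\Omega B'$. The two connectivity hypotheses give exactly the two facts you isolate, $p^*\colon[B,B]\xrightarrow{\ \cong\ }[E,B]$ (so $\phi_B$ exists and is unique) and $[F,\Omega B]=\ast$ (so $\phi_F$ is independent of the chosen square-filling homotopy), making $q=(\phi_F,\phi_B)$ a well-defined homomorphism. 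The identification of based self-maps of $E$ over $B$ with lifts of $kp$ through $PB'\to B'$ is a based torsor over $[E,\Omega B']=[E,F]$ with basepoint $\mathrm{id}_E$; restricting to the fibre recovers $g|_F$ in the $\Omega B'$-group structure on $[F,F]$, which is exactly the map whose kernel appears in the statement, and for $g$ in that kernel the five lemma shows $\mu(\mathrm{id}_E,g)$ is an equivalence, so $d$ lands in $\pi_0\hAut_*(E)$. Exactness in the middle, as a sequence of pointed sets (which is all the paper uses, cf.\ \cref{KernelAlgGroups}), is then the lift-and-replace bookkeeping you describe: kill $\phi_B$ by a homotopy to move $\phi$ strictly over $B$, read off $g=\delta(\phi)$, and use $\phi_F\simeq\mathrm{id}_F$ to force $g|_F\simeq\ast$. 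I see no gap; the only points that would need to be spelled out in a full writeup are the ones you already flag, namely that $\phi_F$ is independent of choices and that the torsor identification respects basepoints, and both reduce to $[F,\Omega B]=\ast$ as you say.
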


With our more careful setup, we can now induct up the Postnikov tower of $X$. 

\begin{theorem}\label{FiniteKernelImageArithmetic}
	The map $$\pi_0\hAut_*(X) \to \Aut(\mathcal M)/ {\sim}_*$$
	has finite kernel and image an arithmetic group. 
\end{theorem}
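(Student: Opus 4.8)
The plan is to prove the theorem by induction along a principal refinement of the Postnikov tower of $X$. Because $X$ is finite, hence finite-dimensional, for $n$ the cohomological dimension of $X$ the section map induces an isomorphism $\pi_0\hAut_*(X) \cong \pi_0\hAut_*(X[n])$, where $X[k]$ denotes the $k$-th Postnikov section, and $\Aut(\mathcal M)/{\sim_*} \cong \Aut(\mathcal M^k)/{\sim_*}$ as in \cref{AlgebraicityCorollary}; so it suffices to prove that for every $k$ the map $\pi_0\hAut_*(X[k]) \to \Aut(\mathcal M^k)/{\sim_*}$ has finite kernel and image an arithmetic group. Since $X$ is nilpotent we may choose the tower so that each stage $X[k]\to X[k-1]$ is a principal fibration with fibre $K(A,k)$ for a finitely generated abelian group $A=A_k$; here $\mathcal M^k$ is the submodel of $\mathcal M$ generated in degrees $\le k$, which is the minimal model of $X[k]$.

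For the base case $k=1$ we have $X[1]=B\pi$ with $\pi=\pi_1(X,x)$ finitely generated nilpotent, so $\pi_0\hAut_*(B\pi)=\Aut(\pi)$. The model $\mathcal M^1$ is the Chevalley--Eilenberg algebra of the Malcev Lie algebra $\mathfrak g$ of $\pi$; a degree $-1$ derivation vanishing on degree $1$ generators vanishes on all of $\mathcal M^1$, so $U_*\cap\Aut(\mathcal M^1)$ is trivial and $\Aut(\mathcal M^1)/{\sim_*}=\Aut(\mathcal M^1)=\Aut(\mathfrak g)$, a $\Q$-algebraic group. The resulting map $\Aut(\pi)\to\Aut(\mathfrak g)$ is the one induced by Malcev completion; it has finite kernel (automorphisms trivial modulo torsion, of which there are finitely many) and arithmetic image by the classical theory of Malcev and Borel--Harish-Chandra.

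For the induction step, fix $k\ge 2$ and apply Nomura's \cref{ObstructionTheory} to the principal fibration $K(A,k)\to X[k]\to X[k-1]$ (its hypotheses hold with $n=k$, since $\pi_j K(A,k)=0$ for $j<k$ and $\pi_j X[k-1]=0$ for $j\ge k$). Using $[X[k],K(A,k)]=H^k(X[k];A)$ and $\pi_0\hAut_*(K(A,k))=\Aut(A)$, and writing $K_k=\ker\rho$ (a quotient of a finitely generated abelian group, hence finitely generated abelian) and $Q_k=\im\rho$, Nomura's sequence gives a short exact sequence $1\to K_k\to\pi_0\hAut_*(X[k])\to Q_k\to 1$ with $K_k$ abelian and $Q_k$ a subgroup of $\Aut(A)\times\pi_0\hAut_*(X[k-1])$. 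I would then build the parallel algebraic short exact sequence: the CDGA analogue of this obstruction theory says an element of $\Aut(\mathcal M^k)/{\sim_*}$ is determined by its restriction to $\mathcal M^{k-1}$ together with an action on the degree-$k$ generators $\pi_k(X)\otimes\Q$ compatible with the rational $k$-invariant, modulo the $\Q$-vector space $V_k=H^k(\mathcal M^{k-1};\pi_k(X)\otimes\Q)$. This produces
\[1\to V_k\to\Aut(\mathcal M^k)/{\sim_*}\to\mathbb{Q}_k\to 1,\]
where $\mathbb{Q}_k\le \mathrm{GL}(\pi_k(X)\otimes\Q)\times\Aut(\mathcal M^{k-1})/{\sim_*}$ is the algebraic subgroup of pairs fixing the rational $k$-invariant — algebraic by \cref{CohomologyAction} — and $V_k$ is a vector group, hence unipotent, sitting inside $\Aut(\mathcal M^k)/{\sim_*}$ as an algebraic subgroup by \cref{SpecialAlgebraicGroupFact}. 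The minimal-model functor carries the topological sequence to the algebraic one. The left vertical $K_k\to V_k$ is (the restriction to a subspace of) rationalisation on cohomology, so it has finite, torsion kernel and image a full lattice, hence arithmetic. For the right vertical, $\Aut(A)\to\mathrm{GL}(\pi_k(X)\otimes\Q)$ has finite kernel and arithmetic image, and $\pi_0\hAut_*(X[k-1])\to\Aut(\mathcal M^{k-1})/{\sim_*}$ does too by the inductive hypothesis; since $Q_k$ and $\mathbb{Q}_k$ differ from, respectively, the preimage and the image of $\mathbb{Q}_k$ under the product map only by the finite torsion subgroup of the integral $k$-invariant, an application of \cref{GeneralAlgebraicGroupFacts} shows $Q_k\to\mathbb{Q}_k$ has finite kernel and arithmetic image. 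Now \cref{ExactSequenceAlgGroups}, applied to this map of short exact sequences with the unipotent bottom-left term $V_k$, yields that $\pi_0\hAut_*(X[k])\to\Aut(\mathcal M^k)/{\sim_*}$ has finite kernel and arithmetic image. Taking $k=n$ finishes the proof.

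The main obstacle is the bookkeeping in the induction step: organising Nomura's exact sequence of pointed sets and groups into a genuine short exact sequence of groups, constructing the algebraic short exact sequence on the CDGA side together with a compatible comparison map, and — this is where ``finite kernel'' and ``arithmetic image'' actually enter — controlling the discrepancy between the integral $k$-invariant (and integral cohomology) and its rationalisation. Keeping the basepoint, equivalently the augmentation and the relation $\sim_*$, visible throughout is precisely the point on which the arguments of Sullivan and Triantafillou were imprecise, so the verification that $V_k$ and $U_*$ — rather than their unbased analogues — are the groups appearing must be carried out with care. Everything else, namely the reduction to $X[n]$, the base case, and the final assembly via \cref{ExactSequenceAlgGroups}, is routine given the facts cited above.
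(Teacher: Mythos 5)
Your proposal takes essentially the same route as the paper: reduce to a Postnikov truncation by cohomological dimension, induct along the tower, run Nomura's \cref{ObstructionTheory} to get a comparison of exact sequences, identify the right-hand terms as stabilizers of $k$-invariants, and finish with \cref{ExactSequenceAlgGroups}. Two points need repair.

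First, you write ``Since $X$ is nilpotent we may choose the tower so that each stage $X[k]\to X[k-1]$ is a principal fibration with fibre $K(A,k)$'' where $X[k]$ denotes the honest $k$-th Postnikov section. For a nilpotent but non-simple space that fibration is not principal: $\pi_1$ acts nontrivially on $\pi_k$. The paper instead works with a \emph{refinement} of the Postnikov tower $\cdots\to X_{n+1}\to X_n\to\cdots\to X_1\to *$ in which the degree of the Eilenberg--Mac\,Lane fibre is ``non-strictly increasing'' --- several successive stages occur in the same degree, coming from the $\pi_1$-central series on $\pi_k$ --- and only these finer stages are principal, which is what \cref{ObstructionTheory} requires. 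With the refined tower the induction starts at the point, not at $B\pi$, so your separate $k=1$ base case (Mal'cev Lie algebra, Borel--Harish-Chandra) is not needed; $B\pi$ is built over several inductive steps exactly like the higher stages.

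Second, and more seriously, your treatment of the right vertical $Q_k\to\mathbb{Q}_k$ is not a proof. The claim that $Q_k$ and $\mathbb{Q}_k$ ``differ \dots\ only by the finite torsion subgroup of the integral $k$-invariant'' does not identify $Q_k$ as a finite-index subgroup of an arithmetic group, and \cref{GeneralAlgebraicGroupFacts} says nothing about comparing an integral stabilizer with a rational one, since neither stabilizer is \emph{a priori} the image or kernel of an algebraic group homomorphism. This is precisely the gap the paper fills with \cref{KernelAlgGroups}: it regards $\stab(k_n)$ and $\stab((k_n)_\Q)$ as kernels of the evaluation-at-$k$-invariant maps of pointed sets (with basepoint $k_n$), checks that those evaluation maps satisfy the coset-compatibility hypothesis, and then deduces that $\stab(k_n)\to\stab((k_n)_\Q)$ has finite kernel and arithmetic image. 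You should invoke \cref{KernelAlgGroups} here rather than \cref{GeneralAlgebraicGroupFacts}. Once these two points are corrected, the rest of your assembly --- restricting to kernels on the left, using \cref{SpecialAlgebraicGroupFact} to see the cohomology map is algebraic, then \cref{ExactSequenceAlgGroups} --- matches the paper. (Incidentally, the paper obtains finiteness of the kernel of the whole map directly from Hilton--Mislin--Roitberg rather than from the induction, which is a minor simplification.)
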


\begin{proof}
Since $X$ is either a finite nilpotent space (or a Postnikov truncation thereof) there is a refinement of its Postnikov tower $$\cdots X_{n+1} \rightarrow X_n \rightarrow \cdots \rightarrow X_1 \rightarrow *$$
where each successive pair is given as the pullback of a path space fibration of an Eilenberg--Maclane space $K(\pi_{n+1}, n + 1)$ with $n$ non-strictly increasing. $\mathcal M^n$ will denote the minimal model of $X_n$. We will use $\pi_n$ to denote $n^{\text{th}}$ homotopy group of the fibre of $X_{n} \rightarrow X_{n-1}$, and we will use $Q^n$ to denote the dual rationalization.

To begin, for any $n$ the map $$\pi_0 \Hom(X_n, X_n) \cong \pi_0 \Hom(X, X_n) \rightarrow \pi_0 \Hom(X_, (X_n)_\Q) \cong \pi_0 \Hom((X_n)_\Q, (X_n)_\Q)$$ is finite-to-one \cite[Corollary II.5.4]{Hilton11975} (here the Hom-spaces are based). \linebreak The map $\pi_0(\hAut_*(X_n)) \to \Aut(\mathcal M^{n})/{\sim}_*$ is a restriction and thus has finite kernel.

We must show that the image of the map $\pi_0(\Aut(X_n)) \to \Aut(\mathcal M^{n})/{\sim}_*$ is an arithmetic group. For this will use an inductive obstruction theory argument. We note that $ \Q$ is the minimal model of the point, and $\Aut(\Q) / {\sim}_* $ is trivial, and the trivial subgroup of the trivial group is arithmetic.

Now we consider the fibration $$K(\pi_{n}, n) \rightarrow X_{n} \rightarrow X_{n-1}.$$
Since this is a principal fibration so we can apply \cref{ObstructionTheory}. We see that $$\ker([X_{n}, K(\pi_{n}, n)] \rightarrow [K(\pi_{n}, n), K(\pi_{n}, n)])$$
is the kernel of the map from the exact sequence of a pair $(X_{n}, X_{n-1})$ which gives $$\ker (H^{n}(X_{n}; \pi_{n}) \rightarrow H^{n+1}(X_{n}, X_{n-1}; \pi_{n})) \iso H^{n}(X_{n-1}; \pi_{n}).$$
So we have the exact sequence
$$H^{n}(X_{n-1};\pi_{n}) \rightarrow \pi_0 \hAut_* (X_{n}) \xrightarrow[(F,\phi)]{} \pi_0 \hAut_* (X_{n-1}) \times \Aut(\pi_{n}).$$

Since all our reasoning is still true rationally so we obtain the following comparison of exact sequences
\[\begin{tikzcd}
	{H^{n}(X_{n-1}; \pi_{n})} & {\pi_0\hAut_*(X_{n})} & \pi_0\hAut_*(X_{n-1}) \times  \Aut(\pi_{n})  \\
	{H^{n}(\mathcal M^{n-1}; Q^{n})} & {\Aut(\mathcal M^{n})/{\sim}_*} & \Aut(\mathcal M^{n-1})/{\sim}_* \times \Aut(Q^{n})
	\arrow[from=1-1, to=1-2]
	\arrow[from=1-1, to=2-1]
	\arrow["{(F,\phi)}"', from=1-2, to=1-3]
	\arrow[from=1-2, to=2-2]
	\arrow[from=1-3, to=2-3]
	\arrow[from=2-1, to=2-2]
	\arrow["({F_\Q,\phi_\Q)}"', from=2-2, to=2-3]
\end{tikzcd}.\]

We aim to modify this diagram to apply \cref{ExactSequenceAlgGroups} to conclude that the image of the middle map is an arithmetic group.

First, $\pi_0\hAut( X_{n-1})/ \times \Aut(\pi_{n})$ acts on $H^{n+1}(X_{n-1}; \pi_{n})$ by having the first term act on the space and the second term act on the coefficients. The image of $(F,\phi)$ is exactly those based self-equivalences of $X_{n-1}$ and automorphisms of $\pi_{n}$ that can be lifted to based self-equivalences of $X_{n}$, i.e. based self-equivalences of $X_{n-1}$ and automorphisms of $\pi_{n}$ that stabilize the $k$-invariant $k_n \in H^{n+1}(X_{n-1}; \pi_{n})$. This means that $\im(\F, \phi) = \stab (k_n)$. All of this is also true rationally and we have a map $\stab (k_n) \to \stab ((k_n)_\Q)$. \cref{CohomologyAction} says  that rationally the action map is algebraic and so we see that $\stab ((k_n)_\Q)$ is algebraic subgroup (see \cite[p. 52]{Borel1991}). We can view the stabilizer as the the kernel of the evaluation map at $k_n$ $\pi_0\hAut( X_{n-1})/ \times \Aut(\pi_{n}) \to H^{n+1}(X_{n-1}; \pi_{n})$. This is notably not a homomorphism but it is structured enough to apply to apply \cref{KernelAlgGroups} (here the appropriate base point for the set $H^{n+1}(X_{n-1}; \pi_{n})$ is $k_n$). This shows that the map $\stab (k_n) \to \stab ((k_n)_\Q)$ has finite kernel and image an arithmetic group. So we have reduced to the comparison of exact sequences
\[\begin{tikzcd}
	{H^{n}(X_{n-1}; \pi_{n})} & {\pi_0\hAut_*(X_{n})} & {\im(F,\phi)} & 1 \\
	{H^{n}(\mathcal M^{n-1}; Q^{n})} & {\Aut(\mathcal M^{n})/{\sim}_*} & {\im(F_\Q, \phi_\Q)} & 1
	\arrow[from=1-1, to=1-2]
	\arrow[from=1-1, to=2-1]
	\arrow["{(F,\phi)}"', from=1-2, to=1-3]
	\arrow[from=1-2, to=2-2]
	\arrow[from=1-3, to=1-4]
	\arrow[from=1-3, to=2-3]
	\arrow[from=2-1, to=2-2]
	\arrow["{(F_\Q,\phi_\Q)}"', from=2-2, to=2-3]
	\arrow[from=2-3, to=2-4]
\end{tikzcd}\]
in which the bottom right horizontal map is algebraic and the right vertical map has finite kernel and image an arithmetic subgroup.

Now we can restrict to kernels. The map $\ker (F,\phi) \to \ker(F_\Q, \phi_\Q)$ has finite kernel because the middle vertical map above does. The map $H^n(\mathcal M^{n-1};Q^n) \to \ker (F_\Q, \phi_\Q)$ is algebraic by \cref{SpecialAlgebraicGroupFact} and thus the image of the composition $$ H^n(X_{n-1}; \pi_n) \to H^n(\mathcal M^{n-1};Q^n) \to \ker (F_\Q, \phi_\Q)$$
is an arithmetic group by \cref{GeneralAlgebraicGroupFacts}. This image is exactly the image of the map $\ker (F,\phi) \to \ker(F_\Q, \phi_\Q)$. So we can restrict to the diagram
\[\begin{tikzcd}
	1 & {\ker (F,\phi)} & {\pi_0\hAut_*(X_{n})} & {\im(F,\phi)} & 1 \\
	1 & {\ker(F_\Q, \phi_\Q)} & {\Aut(\mathcal M^{n})/{\sim}_*} & {\im(F_\Q, \phi_\Q)} & 1
	\arrow[from=1-1, to=1-2]
	\arrow[from=1-2, to=1-3]
	\arrow[from=1-2, to=2-2]
	\arrow["{(F,\phi)}"', from=1-3, to=1-4]
	\arrow[from=1-3, to=2-3]
	\arrow[from=1-4, to=1-5]
	\arrow[from=1-4, to=2-4]
	\arrow[from=2-1, to=2-2]
	\arrow[from=2-2, to=2-3]
	\arrow["{(F_\Q,\phi_\Q)}"', from=2-3, to=2-4]
	\arrow[from=2-4, to=2-5]
\end{tikzcd}\]
to which we can apply \cref{ExactSequenceAlgGroups}. This completes the proof for any Postnikov truncation of a finite nilpotent space $X$. To complete the proof, we use that  for $n$ at least the homological dimension of $X$ $$\pi_0\hAut_*(X) \iso \pi_0\hAut_* (X_{n}).$$
\end{proof}

Finally, in order to conclude \cref{ArithmeticityHomotopyAutomorphisms} from \cref{FiniteKernelImageArithmetic} we need to know that for finite nilpotent $X$  the group $\pi_0 \Aut(X)$ is residually finite. This is a result of Roitberg \cite[Theorem 3]{Roitberg1984}. We can now prove the result Sullivan originally stated. 

\begin{theorem}\label{SullivanOriginalTheorem}
	For a finite nilpotent space $X$, the group $\pi_0\hAut(X)$ is commensurable to an arithmetic group. 
\end{theorem}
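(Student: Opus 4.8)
The plan is to deduce \cref{SullivanOriginalTheorem} from \cref{FiniteKernelImageArithmetic} (which handles the based group $\pi_0\hAut_*(X)$) by relating the unbased mapping space to the based one via the evaluation fibration, and then upgrading the resulting "commensurable up to finite kernel" statement to genuine commensurability using residual finiteness. So the first step is to recall the fibration sequence of mapping spaces
$$\Map_*(X,X) \longrightarrow \Map(X,X) \xrightarrow{\ \mathrm{ev}\ } X$$
obtained by evaluating at the basepoint. Restricting to equivalences and taking $\pi_0$ does not quite give a short exact sequence of groups because $\pi_0\hAut(X)$ need not act trivially, but the relevant piece is the exact sequence of pointed sets
$$\pi_1(X,x) \longrightarrow \pi_0\hAut_*(X) \longrightarrow \pi_0\hAut(X) \longrightarrow \pi_0(X) = *,$$
coming from the long exact sequence of the fibration together with the observation that the image of $\pi_0\hAut_*(X)$ in $\pi_0\hAut(X)$ is all of it (every self-equivalence is homotopic to a based one since $X$ is connected). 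So $\pi_0\hAut(X)$ is a quotient of $\pi_0\hAut_*(X)$ by the image of $\pi_1(X,x)$ acting by "change of basepoint along a loop" (pointwise conjugation / the action through $\pi_1$).

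The second step is to control this image. Since $X$ is a finite nilpotent space, $\pi_1(X,x)$ is a finitely generated nilpotent group, hence in particular the image of $\pi_1(X,x) \to \pi_0\hAut_*(X)$ is a finitely generated nilpotent — and crucially, finitely \emph{presented} — normal subgroup $N$. Then $\pi_0\hAut(X) \cong \pi_0\hAut_*(X)/N$. Here I would invoke \cref{FiniteKernelImageArithmetic}: we have a homomorphism $r\colon \pi_0\hAut_*(X) \to \Aut(\mathcal M)/{\sim}_*$ with finite kernel and arithmetic image. The subgroup $r(N)$ sits inside an arithmetic group; its normal closure, or rather the subgroup generated by it and its conjugates — but $N$ is already normal, so $r(N)$ is normal in $r(\pi_0\hAut_*(X))$. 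Unfortunately an arbitrary normal subgroup of an arithmetic group need not be arithmetic or even finite, so one cannot simply quotient. Instead, the cleaner route is to forget the based refinement here: $\pi_0\hAut(X)$ receives a map to $\Aut(\mathcal M)/{\sim}$ (the non-augmented version, which is algebraic by \cref{AlgebraicityCorollary}), fitting into a commutative square with the based map to $\Aut(\mathcal M)/{\sim}_*$, and one shows directly — by the same Postnikov-tower induction as in \cref{FiniteKernelImageArithmetic}, but with $\hAut$ in place of $\hAut_*$ and using that $\Map(X,X) \to \Map(X,X_\mathbb{Q})$ is still finite-to-one on $\pi_0$ by \cite[Corollary II.5.4]{Hilton11975} (which is stated unbased) — that this map $\pi_0\hAut(X) \to \Aut(\mathcal M)/{\sim}$ has finite kernel and arithmetic image. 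Running the identical inductive argument works because every input used (Nomura's obstruction theory, the algebraicity of the cohomology action, the unipotence of the relevant kernels) has an unbased analogue, the only change being the replacement of $\sim_*$ by $\sim$ and of $\hAut_*$ by $\hAut$ throughout, together with dropping the "augmentation-preserving" qualifier on the $U_*$-versus-$U$ distinction.

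The third and final step is to promote "commensurable up to finite kernel" to "commensurable". We now have a surjection $\pi_0\hAut(X) \twoheadrightarrow \Gamma'$ onto a finite-index subgroup $\Gamma'$ of an arithmetic group with finite kernel $F$. By Roitberg \cite[Theorem 3]{Roitberg1984}, $\pi_0\hAut(X)$ is residually finite. Residual finiteness gives a finite-index subgroup $H \leq \pi_0\hAut(X)$ with $H \cap F = 1$ (intersect finitely many finite-index subgroups, one for each nontrivial element of the finite set $F$), and then $H \xrightarrow{\sim} $ its image in $\Gamma'$ is an isomorphism onto a finite-index subgroup of the arithmetic group. Hence $\pi_0\hAut(X)$ contains a finite-index subgroup isomorphic to a finite-index subgroup of an arithmetic group, which is exactly commensurability. \textbf{The main obstacle} I anticipate is the second step: one must check that the entire inductive machinery of \cref{FiniteKernelImageArithmetic} — in particular \cref{ExactSequenceAlgGroups} and \cref{KernelAlgGroups}, and the finiteness of $\pi_0\Map(X_n,X_n)\to\pi_0\Map((X_n)_\mathbb{Q},(X_n)_\mathbb{Q})$ — survives verbatim in the unbased setting, or else argue that the based and unbased groups differ only by the controlled subgroup coming from $\pi_1$ and handle that algebraically; the based case was singled out in this paper precisely because Sullivan's original unbased obstruction theory was defective, so care is needed to make sure the unbased statement is nonetheless recovered correctly at the level of $\pi_0$.
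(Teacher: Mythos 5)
Your step 1 (the exact sequence from the evaluation fibration) and your step 3 (upgrading ``commensurable up to finite kernel'' to genuine commensurability via residual finiteness) match the paper's proof. But your step 2 contains a genuine gap, and in fact proposes to take precisely the road that the appendix explains is impassable.

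You correctly notice that the na\"ive quotient argument runs into trouble---the image of $\pi_1(X,x)$ in $\pi_0\hAut_*(X)$ need not be finite, so one cannot simply invoke ``normal subgroup of arithmetic group''---and you then pivot to a ``cleaner route'': rerun the entire Postnikov-tower induction of \cref{FiniteKernelImageArithmetic} with $\hAut$ in place of $\hAut_*$ and $\sim$ in place of $\sim_*$. You assert that ``every input used (Nomura's obstruction theory, the algebraicity of the cohomology action, the unipotence of the relevant kernels) has an unbased analogue.'' This is exactly the claim that fails: Nomura's obstruction theory (\cref{ObstructionTheory}) is intrinsically a statement about \emph{based} self-equivalences of \emph{based} principal fibrations, and the whole point of Triantafillou's objection---and of this appendix---is that Sullivan's original unbased version of this induction is defective precisely here. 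You flag this as an ``anticipated obstacle'' in your last paragraph, but you do not resolve it, so your argument does not close.

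The paper instead bootstraps the unbased statement from the based one, staying with your step 1's exact sequence. It forms the commutative ladder comparing $\pi_1 \to \pi_0\hAut_*(X) \to \pi_0\hAut(X) \to 1$ with $Q^1 \to \Aut(\mathcal M)/{\sim_*} \to \Aut(\mathcal M)/{\sim} \to 1$ and analyzes the kernel of the right-hand vertical map directly. Arithmeticity of the image follows by pushing the image of the based map forward along the surjective algebraic map $\Aut(\mathcal M)/{\sim_*} \to \Aut(\mathcal M)/{\sim}$. Finiteness of the kernel is the substantive part: every element of the kernel is shown to be torsion by a diagram chase using that $\pi_1 \to Q^1$ is the Mal'cev completion (so powers of elements of $Q^1$ lift to $\pi_1$), and then the kernel is shown to be polycyclic-by-finite because $Q^1$ is unipotent, so the relevant preimage is finite-by-nilpotent; a torsion polycyclic-by-finite group is finite. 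This is the content you would need to supply in place of the unbased induction. Finally, note the paper also begins by reducing from possibly disconnected $X$ to connected $X$ via a wreath-product argument, which your proof silently skips.
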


\begin{proof}
	If $X$ has multiple path components then $$\pi_0\Aut(X) \simeq S \ltimes \prod_{i \in \pi_0(X)} \pi_0\hAut(X_i) $$
	where $S$ is a product of symmetric groups, one of order $k$ for each cardinality $k$ collection of homotopic path components of $X$. Then $\prod_{i \in \pi_0(X)} \pi_0 \hAut(X_i)$ is finite index inside this product, and so if each $ \pi_0\Aut(X_i)$ is commensurable to an arithmetic group the $\pi_0\hAut(X)$ is commensurable to an arithmetic group. So we can assume that $X$ has only one path component.
	
Now, pick an arbitrary basepoint for $X$. Then, in the notation of \cref{FiniteKernelImageArithmetic}, we have a comparison of the exact sequences which relate the based and unbased cases
\[\begin{tikzcd}
	{\pi_1} & {\pi_0\hAut_*(X)} & {\pi_0\hAut(X)} & 1 \\
	{Q^1} & {\Aut(\mathcal M)/{\sim_*}} & {\Aut(\mathcal M)/{\sim}} & 1
	\arrow["\boundry", from=1-1, to=1-2]
	\arrow[from=1-1, to=2-1]
	\arrow[from=1-2, to=1-3]
	\arrow[from=1-2, to=2-2]
	\arrow[from=1-3, to=1-4]
	\arrow[from=1-3, to=2-3]
	\arrow["{\boundry_\Q}"', from=2-1, to=2-2]
	\arrow[from=2-2, to=2-3]
	\arrow[from=2-3, to=2-4]
\end{tikzcd}.\]
We aim to show that the right-hand map has finite kernel and image an arithmetic group. 

First, the image of the right-hand map is the image of the composition
$${\pi_0\hAut_*(X)} \to  {\Aut(\mathcal M)/{\sim_*}} \to {\Aut(\mathcal M)/{\sim}}.$$
The first map has image an arithmetic group by \cref{ArithmeticityHomotopyAutomorphisms} and the second map is surjective algebraic so the total image is arithmetic subgroup. This shows that the right most vertical map above has image an arithmetic group. 

We now show that any element $x$ be in the kernel of the right-hand map is torsion. Then let $y$ be a lift of $x$ along the map ${\pi_0\hAut_*(X)} \to {\pi_0\hAut(X)}$. Then let $\tilde y$ be the image of $y$ along the middle vertical map. Then $\tilde y$ maps to $0$ along the map ${\Aut(\mathcal M)/{\sim_*}} \to {\Aut(\mathcal M)/{\sim}}$ so it has some lift $\tilde z$ along $\boundry_\Q$. Now the map $\pi_1 \to Q^1$ is given by the Mal'cev completion \cite{Malcev1949}, in other words, this is the map from $\pi_1$ to its uniquely divisible closure. This tells us that for some $n$ there is a lift $w$ of $\tilde z^n$ in $\pi_1$. Now $\boundry w$ and $y^n$ both map to $\tilde y^n$ via ${\pi_0\hAut_*(X)} \to {\Aut(\mathcal M)/{\sim_*}}$ and thus differ by something in the kernel this map, i.e. $y^n = t \boundry w$ for some torsion element $t$. $\boundry w$ maps to $1$ along the map ${\pi_0\hAut_*(X)} \to {\pi_0\hAut(X)}$ and thus $x^n$ is a torsion element meaning that $x$ itself is torsion.

Now, to avoid issues with the Burnside problem, we must establish some kind of finiteness property for kernel of the right-hand map. Let $\tilde \Gamma$ denote the intersection of the image of $\boundry_\Q$ with the image of ${\pi_0\hAut_*(X)} \to {\Aut(\mathcal M)/{\sim_*}}$ and let $\Gamma$ denote the preimage of $\tilde \Gamma$ under ${\pi_0\hAut_*(X)} \to {\Aut(\mathcal M)/{\sim_*}}$. Now $\tilde \Gamma$ is subgroup of a unipotent algebraic group and is thus nilpotent. This means that $\Gamma$ is finite-by-nilpotent, and hence polycyclic-by-finite \cite[Proposition 1.A.2]{Segal1983}. Now the image of $\Gamma$ under the map ${\pi_0\hAut_*(X)} \to {\pi_0\hAut(X)}$ is exactly the kernel we want to show is finite. Thus this kernel is a quotient of a polycyclic-by-finite group and is therefore polycyclic-by-finite. 

So the kernel of the right-hand map is a torsion polycyclic-by-finite group which we can see to just be finite (the subnormal series for the polycyclic part shows that the group is built via extensions from a finite number of finite groups).

This shows that the map ${\pi_0\hAut(X)} \to {\Aut(\mathcal M)/{\sim}}$ has finite kernel and image an arithmetic group. Then by work of Mezher \cite[Theorem E]{Mezher2024} we know that $\pi_0\hAut(X)$ is residually finite (Mezher's result is stated only for simply connected spaces, but their proof applies without issue to nilpotent spaces). Thus, ${\pi_0\hAut(X)}$ is commensurable to an arithmetic group.
\end{proof}


\bibliography{bibliography.bib}

@article{Mislin1975,
	author = {Mislin, Guido},
	date-added = {2026-01-14 14:57:00 -0500},
	date-modified = {2026-01-14 14:57:05 -0500},
	doi = {10.1016/0040-9383(75)90015-4},
	issn = {0040-9383},
	journal = {Topology},
	month = nov,
	number = {4},
	pages = {311--317},
	publisher = {Elsevier BV},
	title = {Wall's obstruction for nilpotent spaces},
	url = {http://dx.doi.org/10.1016/0040-9383(75)90015-4},
	volume = {14},
	year = {1975},
	bdsk-url-1 = {http://dx.doi.org/10.1016/0040-9383(75)90015-4}}

@incollection{Ferry1981,
	author = {Ferry, Steve},
	booktitle = {Shape theory and geometric topology ({D}ubrovnik, 1981)},
	date-added = {2026-01-14 14:46:15 -0500},
	date-modified = {2026-01-14 14:46:21 -0500},
	isbn = {3-540-10846-7},
	mrclass = {57Q12},
	mrnumber = {643523},
	pages = {73--81},
	publisher = {Springer, Berlin},
	series = {Lecture Notes in Math.},
	title = {A simple-homotopy approach to the finiteness obstruction},
	volume = {870},
	year = {1981}}

@article{Schwede2003,
	author = {Schwede, Stefan and Shipley, Brooke},
	date-added = {2026-01-12 20:44:21 -0500},
	date-modified = {2026-01-12 20:44:27 -0500},
	doi = {10.1016/s0040-9383(02)00006-x},
	issn = {0040-9383},
	journal = {Topology},
	month = jan,
	number = {1},
	pages = {103--153},
	publisher = {Elsevier BV},
	title = {Stable model categories are categories of modules},
	url = {http://dx.doi.org/10.1016/S0040-9383(02)00006-X},
	volume = {42},
	year = {2003},
	bdsk-url-1 = {http://dx.doi.org/10.1016/S0040-9383(02)00006-X}}

@book{Borel1991,
	author = {Borel, Armand},
	date-added = {2026-01-06 15:27:09 -0500},
	date-modified = {2026-01-06 15:27:15 -0500},
	doi = {10.1007/978-1-4612-0941-6},
	isbn = {9781461209416},
	issn = {0072-5285},
	journal = {Graduate Texts in Mathematics},
	publisher = {Springer New York},
	title = {Linear Algebraic Groups},
	url = {http://dx.doi.org/10.1007/978-1-4612-0941-6},
	year = {1991},
	bdsk-url-1 = {http://dx.doi.org/10.1007/978-1-4612-0941-6}}

@book{Hilton11975,
	author = {Hilton, Peter and Mislin, Guido and Roitberg, Joe},
	date-added = {2025-12-04 14:41:08 -0500},
	date-modified = {2025-12-04 14:41:19 -0500},
	isbn = {0-7204-2716-9},
	mrclass = {55D99},
	mrnumber = {478146},
	mrreviewer = {J. P. May},
	note = {Notas de Matem\'{a}tica [Mathematical Notes], No. 55},
	pages = {x+156},
	publisher = {North-Holland Publishing Co., Amsterdam-Oxford; American Elsevier Publishing Co., Inc., New York},
	series = {North-Holland Mathematics Studies, No. 15},
	title = {Localization of nilpotent groups and spaces},
	year = {1975}}

@article{Wilkerson1976,
	author = {Wilkerson, Clarence W.},
	date-added = {2025-12-01 12:24:23 -0500},
	date-modified = {2025-12-01 12:27:45 -0500},
	doi = {10.1016/0040-9383(76)90001-x},
	issn = {0040-9383},
	journal = {Topology},
	number = {2},
	pages = {111--130},
	publisher = {Elsevier BV},
	title = {Applications of minimal simplicial groups},
	url = {http://dx.doi.org/10.1016/0040-9383(76)90001-X},
	volume = {15},
	year = {1976},
	bdsk-url-1 = {http://dx.doi.org/10.1016/0040-9383(76)90001-X}}

@article{Whitehead1950,
	author = {Whitehead, J. H. C.},
	date-added = {2025-12-01 12:24:09 -0500},
	date-modified = {2025-12-01 12:27:42 -0500},
	doi = {10.2307/2372133},
	issn = {0002-9327},
	journal = {American Journal of Mathematics},
	month = jan,
	number = {1},
	pages = {1},
	publisher = {JSTOR},
	title = {Simple Homotopy Types},
	url = {http://dx.doi.org/10.2307/2372133},
	volume = {72},
	year = {1950},
	bdsk-url-1 = {http://dx.doi.org/10.2307/2372133}}

@inbook{Waldhausen1985,
	author = {Waldhausen, Friedhelm},
	booktitle = {Algebraic and Geometric Topology},
	date-added = {2025-12-01 12:23:45 -0500},
	date-modified = {2025-12-01 12:27:39 -0500},
	doi = {10.1007/bfb0074449},
	isbn = {9783540394136},
	issn = {1617-9692},
	pages = {318--419},
	publisher = {Springer Berlin Heidelberg},
	title = {Algebraic K-theory of spaces},
	url = {http://dx.doi.org/10.1007/BFb0074449},
	year = {1985},
	bdsk-url-1 = {http://dx.doi.org/10.1007/BFb0074449}}

@inproceedings{Waldhausen1978,
	author = {Waldhausen, Friedhelm},
	booktitle = {Algebraic and geometric topology ({P}roc. {S}ympos. {P}ure {M}ath., {S}tanford {U}niv., {S}tanford, {C}alif., 1976), {P}art 1},
	date-added = {2025-12-01 12:23:15 -0500},
	date-modified = {2025-12-01 12:26:07 -0500},
	mrclass = {18F25 (57Q10 57Q20 57R90)},
	mrnumber = {520492},
	mrreviewer = {Ross Staffeldt},
	pages = {35--60},
	publisher = {Amer. Math. Soc., Providence, RI},
	series = {Proc. Sympos. Pure Math., XXXII},
	title = {Algebraic {$K$}-theory of topological spaces. {I}},
	year = {1978}}

@misc{Triantafillou1999,
	author = {Triantafillou, Georgia},
	date-added = {2025-12-01 12:21:56 -0500},
	date-modified = {2025-12-01 12:27:36 -0500},
	doi = {10.1090/conm/231/03367},
	issn = {0271-4132},
	journal = {Tel Aviv Topology Conference: Rothenberg Festschrift},
	pages = {283--306},
	publisher = {American Mathematical Society},
	title = {The arithmeticity of groups of automorphisms of spaces},
	url = {http://dx.doi.org/10.1090/conm/231/03367},
	year = {1999},
	bdsk-url-1 = {http://dx.doi.org/10.1090/conm/231/03367}}

@article{Swan1960,
	author = {Swan, Richard G.},
	date-added = {2025-12-01 12:21:45 -0500},
	date-modified = {2025-12-01 12:27:28 -0500},
	doi = {10.2307/1969944},
	issn = {0003-486X},
	journal = {The Annals of Mathematics},
	month = may,
	number = {3},
	pages = {552},
	publisher = {JSTOR},
	title = {Induced Representations and Projective Modules},
	url = {http://dx.doi.org/10.2307/1969944},
	volume = {71},
	year = {1960},
	bdsk-url-1 = {http://dx.doi.org/10.2307/1969944}}

@article{Sullivan1977,
	author = {Sullivan, Dennis},
	date-added = {2025-12-01 12:21:28 -0500},
	date-modified = {2025-12-01 12:26:23 -0500},
	fjournal = {Institut des Hautes \'{E}tudes Scientifiques. Publications Math\'{e}matiques},
	issn = {0073-8301},
	journal = {Inst. Hautes \'{E}tudes Sci. Publ. Math.},
	mrclass = {57D99 (55D99 58A10)},
	mrnumber = {646078},
	mrreviewer = {J. F. Adams},
	number = {47},
	pages = {269--331 (1978)},
	title = {Infinitesimal computations in topology},
	url = {http://www.numdam.org/item?id=PMIHES_1977__47__269_0},
	year = {1977},
	bdsk-url-1 = {http://www.numdam.org/item?id=PMIHES_1977__47__269_0}}

@article{Sullivan1974,
	author = {Sullivan, Dennis},
	date-added = {2025-12-01 12:21:11 -0500},
	date-modified = {2025-12-01 12:27:33 -0500},
	doi = {10.2307/1970841},
	issn = {0003-486X},
	journal = {The Annals of Mathematics},
	month = jul,
	number = {1},
	pages = {1},
	publisher = {JSTOR},
	title = {Genetics of Homotopy Theory and the Adams Conjecture},
	url = {http://dx.doi.org/10.2307/1970841},
	volume = {100},
	year = {1974},
	bdsk-url-1 = {http://dx.doi.org/10.2307/1970841}}

@inproceedings{Serre1979,
	author = {Serre, J.-P.},
	booktitle = {Homological group theory ({P}roc. {S}ympos., {D}urham, 1977)},
	date-added = {2025-12-01 12:20:50 -0500},
	date-modified = {2025-12-01 12:26:15 -0500},
	mrclass = {22E40 (20G99 22E41)},
	mrnumber = {564421},
	mrreviewer = {Gopal Prasad},
	pages = {105--136},
	publisher = {Cambridge Univ. Press, Cambridge-New York},
	series = {London Math. Soc. Lecture Note Ser.},
	title = {Arithmetic groups},
	volume = {36},
	year = {1979}}

@book{Segal1983,
	author = {Segal, Daniel},
	date-added = {2025-12-01 12:20:28 -0500},
	date-modified = {2025-12-01 12:27:17 -0500},
	doi = {10.1017/cbo9780511565953},
	isbn = {9780511565953},
	month = may,
	publisher = {Cambridge University Press},
	title = {Polycyclic Groups},
	url = {http://dx.doi.org/10.1017/CBO9780511565953},
	year = {1983},
	bdsk-url-1 = {http://dx.doi.org/10.1017/CBO9780511565953}}

@article{Roitberg1984,
	author = {Roitberg, Joseph},
	date-added = {2025-12-01 12:20:01 -0500},
	date-modified = {2025-12-01 12:27:24 -0500},
	doi = {10.1016/0022-4049(84)90097-5},
	issn = {0022-4049},
	journal = {Journal of Pure and Applied Algebra},
	month = jun,
	number = {3},
	pages = {347--358},
	publisher = {Elsevier BV},
	title = {Residual finiteness in homotopy theory},
	url = {http://dx.doi.org/10.1016/0022-4049(84)90097-5},
	volume = {32},
	year = {1984},
	bdsk-url-1 = {http://dx.doi.org/10.1016/0022-4049(84)90097-5}}

@article{Ranicki1987,
	author = {Ranicki, Andrew},
	date-added = {2025-12-01 12:19:50 -0500},
	date-modified = {2025-12-01 12:35:01 -0500},
	doi = {10.1007/bf00533416},
	issn = {0920-3036},
	journal = {K-Theory},
	month = mar,
	number = {2},
	pages = {115--170},
	publisher = {Portico},
	title = {The algebraic theory of torsion. {II}: Products},
	url = {http://dx.doi.org/10.1007/BF00533416},
	volume = {1},
	year = {1987},
	bdsk-url-1 = {http://dx.doi.org/10.1007/BF00533416}}

@article{Nomura1966,
	author = {Nomura, Yasutoshi},
	date-added = {2025-12-01 12:19:35 -0500},
	date-modified = {2025-12-01 12:27:11 -0500},
	doi = {10.1007/bf01112217},
	issn = {1432-1823},
	journal = {Mathematische Zeitschrift},
	number = {5},
	pages = {380--388},
	publisher = {Springer Science and Business Media LLC},
	title = {Homotopy equivalences in a principal fibre space},
	url = {http://dx.doi.org/10.1007/BF01112217},
	volume = {92},
	year = {1966},
	bdsk-url-1 = {http://dx.doi.org/10.1007/BF01112217}}

@article{Nagy2023,
	abstract = {Given a closed $n$-manifold, we consider the set of simple homotopy types of $n$-manifolds within its homotopy type, called its simple homotopy manifold set. We characterise it in terms of algebraic K-theory, the surgery obstruction map, and the homotopy automorphisms of the manifold. We use this to construct the first examples, for all $n \ge 4$ even, of closed $n$-manifolds that are homotopy equivalent but not simple homotopy equivalent. In fact, we construct infinite families of manifolds that are all homotopy equivalent but pairwise not simple homotopy equivalent, and our examples can be taken to be smooth for $n \geq 6$. Our examples are homotopy equivalent to the product of a circle and a lens space. We analyse the simple homotopy manifold sets of these manifolds, determining exactly when they are trivial, finite, or infinite, and investigating their asymptotic behaviour. The proofs involve integral representation theory and class numbers of cyclotomic fields. We also compare with the relation of $h$-cobordism, and produce similar detailed quantitative descriptions of the manifold sets that arise.},
	author = {Csaba Nagy and John Nicholson and Mark Powell},
	date-added = {2025-12-01 12:18:56 -0500},
	date-modified = {2025-12-01 12:27:07 -0500},
	eprint = {2312.00322},
	month = {12},
	title = {Simple homotopy types of even dimensional manifolds},
	url = {https://arxiv.org/pdf/2312.00322.pdf},
	year = {2023},
	bdsk-url-1 = {https://arxiv.org/pdf/2312.00322.pdf},
	bdsk-url-2 = {https://arxiv.org/abs/2312.00322}}

@article{Mislin1976,
	author = {Mislin, Guido},
	date-added = {2025-12-01 12:18:31 -0500},
	date-modified = {2025-12-01 12:25:09 -0500},
	doi = {10.2307/1970951},
	issn = {0003-486X},
	journal = {The Annals of Mathematics},
	month = may,
	number = {3},
	pages = {547},
	publisher = {JSTOR},
	title = {Finitely Dominated Nilpotent Spaces},
	url = {http://dx.doi.org/10.2307/1970951},
	volume = {103},
	year = {1976},
	bdsk-url-1 = {http://dx.doi.org/10.2307/1970951}}

@article{Milnor1966,
	author = {Milnor, J.},
	date-added = {2025-12-01 12:18:19 -0500},
	date-modified = {2025-12-01 12:25:06 -0500},
	doi = {10.1090/s0002-9904-1966-11484-2},
	issn = {1088-9485},
	journal = {Bulletin of the American Mathematical Society},
	number = {3},
	pages = {358--426},
	publisher = {American Mathematical Society (AMS)},
	title = {Whitehead torsion},
	url = {http://dx.doi.org/10.1090/S0002-9904-1966-11484-2},
	volume = {72},
	year = {1966},
	bdsk-url-1 = {http://dx.doi.org/10.1090/S0002-9904-1966-11484-2}}

@article{Mezher2024,
	abstract = {We show that for a smooth, closed 2-connected manifold $M$ of dimension $d \geq 6$, the topological mapping class group $π_0 \mathrm{Homeo}(M)$ is residually finite, in contrast to the situation for the smooth mapping class group $π_0 \mathrm{Diff}(M)$. Combined with a result of Sullivan, this implies that $π_0 \mathrm{Homeo}(M)$ is an arithmetic group. The proof uses embedding calculus, and is of independent interest: we show that the $T_k$-mapping class group, $π_0 T_k \mathrm{Diff}(M)$, is residually finite, for all $k \in \mathbb{N}$. The statement on the topological mapping class group is then deduced from the Weiss fibre sequence, convergence of the embedding calculus tower and smoothing theory.},
	author = {Fadi Mezher},
	date-added = {2025-12-01 12:18:04 -0500},
	date-modified = {2025-12-01 12:25:02 -0500},
	eprint = {2410.08902},
	month = {10},
	title = {Residual finiteness of some automorphism groups of high dimensional manifolds},
	url = {https://arxiv.org/pdf/2410.08902.pdf},
	year = {2024},
	bdsk-url-1 = {https://arxiv.org/pdf/2410.08902.pdf},
	bdsk-url-2 = {https://arxiv.org/abs/2410.08902}}

@article{Mather1965,
	author = {Mather, Michael},
	date-added = {2025-12-01 12:17:48 -0500},
	date-modified = {2025-12-01 12:24:57 -0500},
	doi = {10.1016/0040-9383(65)90050-9},
	issn = {0040-9383},
	journal = {Topology},
	month = jul,
	number = {1},
	pages = {93--94},
	publisher = {Elsevier BV},
	title = {Counting homotopy types of manifolds},
	url = {http://dx.doi.org/10.1016/0040-9383(65)90050-9},
	volume = {4},
	year = {1965},
	bdsk-url-1 = {http://dx.doi.org/10.1016/0040-9383(65)90050-9}}

@article{Malcev1949,
	author = {Mal\'cev, A. I.},
	date-added = {2025-12-01 12:17:28 -0500},
	date-modified = {2025-12-01 12:25:38 -0500},
	fjournal = {Izvestiya Akademii Nauk SSSR. Seriya Matematicheskaya},
	issn = {0373-2436},
	journal = {Izv. Akad. Nauk SSSR Ser. Mat.},
	mrclass = {20.0X},
	mrnumber = {28843},
	mrreviewer = {I. Kaplansky},
	pages = {201--212},
	title = {Nilpotent torsion-free groups},
	volume = {13},
	year = {1949}}

@book{Lurie2009,
	author = {Lurie, Jacob},
	date-added = {2025-12-01 12:15:24 -0500},
	date-modified = {2025-12-01 22:21:49 -0500},
	doi = {10.1515/9781400830558},
	isbn = {9781400830558},
	month = dec,
	publisher = {Princeton University Press},
	title = {Higher Topos Theory},
	url = {http://dx.doi.org/10.1515/9781400830558},
	year = {2009},
	bdsk-url-1 = {http://dx.doi.org/10.1515/9781400830558}}

@article{Kupers2025,
	author = {Kupers, Alexander},
	date-added = {2025-12-01 12:15:05 -0500},
	date-modified = {2025-12-01 12:24:51 -0500},
	doi = {10.4310/hha.2025.v27.n1.a4},
	issn = {1532-0081},
	journal = {Homology, Homotopy and Applications},
	number = {1},
	pages = {51--74},
	publisher = {International Press of Boston},
	title = {Mapping class groups of manifolds with boundary are of finite type},
	url = {http://dx.doi.org/10.4310/hha.2025.v27.n1.a4},
	volume = {27},
	year = {2025},
	bdsk-url-1 = {http://dx.doi.org/10.4310/hha.2025.v27.n1.a4}}

@book{Humphreys1975,
	author = {Humphreys, James E.},
	date-added = {2025-12-01 12:14:39 -0500},
	date-modified = {2025-12-01 12:25:50 -0500},
	mrclass = {20GXX (14LXX)},
	mrnumber = {396773},
	mrreviewer = {T. Ono},
	pages = {xiv+247},
	publisher = {Springer-Verlag, New York-Heidelberg},
	series = {Graduate Texts in Mathematics, No. 21},
	title = {Linear algebraic groups},
	year = {1975}}

@article{Hsiang1976,
	author = {Hsiang, W. C. and Sharpe, R. W.},
	date-added = {2025-12-01 12:14:05 -0500},
	date-modified = {2025-12-01 12:25:58 -0500},
	fjournal = {Pacific Journal of Mathematics},
	issn = {0030-8730},
	journal = {Pacific J. Math.},
	mrclass = {57E05 (57D60 58D05)},
	mrnumber = {494165},
	mrreviewer = {D. Burghelea},
	number = {2},
	pages = {401--459},
	title = {Parametrized surgery and isotopy},
	url = {http://projecteuclid.org/euclid.pjm/1102817501},
	volume = {67},
	year = {1976},
	bdsk-url-1 = {http://projecteuclid.org/euclid.pjm/1102817501}}

@book{Guaschi2018,
	author = {Guaschi, John and Juan-Pineda, Daniel and Mill\'{a}n L\'{o}pez, Silvia},
	date-added = {2025-12-01 12:13:10 -0500},
	date-modified = {2025-12-01 12:26:49 -0500},
	isbn = {978-3-319-99488-8; 978-3-319-99489-5},
	mrclass = {20F36 (18F25 19A31 19B28 20C05)},
	mrnumber = {3839305},
	mrreviewer = {Markus Szymik},
	pages = {x+80},
	publisher = {Springer, Cham},
	series = {SpringerBriefs in Mathematics},
	title = {The lower algebraic {$K$}-theory of virtually cyclic subgroups of the braid groups of the sphere and of {${\Bbb Z}[B_4({\Bbb S}^2)]$}},
	year = {2018}}

@article{Gersten1967,
	author = {Gersten, Stephen M.},
	date-added = {2025-12-01 12:12:24 -0500},
	date-modified = {2025-12-01 12:24:48 -0500},
	doi = {10.1016/0040-9383(67)90027-4},
	issn = {0040-9383},
	journal = {Topology},
	month = aug,
	number = {3},
	pages = {411--414},
	publisher = {Elsevier BV},
	title = {The torsion of a self equivalence},
	url = {http://dx.doi.org/10.1016/0040-9383(67)90027-4},
	volume = {6},
	year = {1967},
	bdsk-url-1 = {http://dx.doi.org/10.1016/0040-9383(67)90027-4}}

@article{Dror1981,
	author = {Dror, E. and Dwyer, W. G. and Kan, D. M.},
	date-added = {2025-12-01 12:12:10 -0500},
	date-modified = {2025-12-01 12:24:42 -0500},
	doi = {10.1007/bf02566229},
	issn = {1420-8946},
	journal = {Commentarii Mathematici Helvetici},
	month = dec,
	number = {1},
	pages = {599--614},
	publisher = {European Mathematical Society - EMS - Publishing House GmbH},
	title = {Self homotopy equivalences of virtually nilpotent spaces},
	url = {http://dx.doi.org/10.1007/BF02566229},
	volume = {56},
	year = {1981},
	bdsk-url-1 = {http://dx.doi.org/10.1007/BF02566229}}

@article{Cockcroft1972,
	author = {Cockcroft, W. H. and Moss, R. M. F.},
	date-added = {2025-12-01 12:11:26 -0500},
	date-modified = {2025-12-01 12:11:44 -0500},
	doi = {10.1112/jlms/s2-5.4.726},
	fjournal = {Journal of the London Mathematical Society. Second Series},
	issn = {0024-6107},
	journal = {J. London Math. Soc. (2)},
	mrclass = {55D15 (55A20)},
	mrnumber = {312492},
	mrreviewer = {K. Sieklucki},
	pages = {726--728},
	title = {On the simple homotopy type of certain two-dimensional complexes},
	url = {https://doi.org/10.1112/jlms/s2-5.4.726},
	volume = {5},
	year = {1972},
	bdsk-url-1 = {https://doi.org/10.1112/jlms/s2-5.4.726}}

@article{Chatterjee2015,
	abstract = {In this paper we study abstract group homomorphisms between the groups of rational points of linear algebraic groups which are not necessarily reductive. One of our main goal is to obtain results on homomorphisms from the groups of rational points of linear algebraic groups defined over certain specific fields to the groups of rational points of linear algebraic groups over number fields and non-archimedean local fields of characteristic zero; in this set-up we deal with the unexplored topic of abstract homomorphisms from the groups of rational points of anisotropic groups over non-archimedean local fields of characteristic zero. We also obtain results on abstract homomorphisms from unipotent and solvable groups, and prove general results on the structures of abstract homomorphisms using the celebrated result of Borel and Tits in this area and a well-known theorem due to Tits on the structure of the groups of rational points of isotropic semisimple groups.},
	author = {Pralay Chatterjee},
	date-added = {2025-12-01 12:10:24 -0500},
	date-modified = {2025-12-01 12:10:29 -0500},
	eprint = {1511.06180},
	month = {11},
	title = {On abstract homomorphisms of algebraic groups},
	url = {https://arxiv.org/pdf/1511.06180.pdf},
	year = {2015},
	bdsk-url-1 = {https://arxiv.org/pdf/1511.06180.pdf},
	bdsk-url-2 = {https://arxiv.org/abs/1511.06180}}

@article{Borel1973,
	author = {Borel, A. and Serre, J-P.},
	date-added = {2025-12-01 12:07:55 -0500},
	date-modified = {2025-12-01 12:08:02 -0500},
	doi = {10.1007/bf02566134},
	issn = {1420-8946},
	journal = {Commentarii Mathematici Helvetici},
	month = dec,
	number = {1},
	pages = {436--491},
	publisher = {European Mathematical Society - EMS - Publishing House GmbH},
	title = {Corners and arithmetic groups},
	url = {http://dx.doi.org/10.1007/BF02566134},
	volume = {48},
	year = {1973},
	bdsk-url-1 = {http://dx.doi.org/10.1007/BF02566134}}

@article{Block2005,
	author = {Block, J. and Lazarev, A.},
	date-added = {2025-12-01 12:07:21 -0500},
	date-modified = {2025-12-01 12:08:19 -0500},
	doi = {10.1016/j.aim.2004.04.014},
	issn = {0001-8708},
	journal = {Advances in Mathematics},
	month = may,
	number = {1},
	pages = {18--39},
	publisher = {Elsevier BV},
	title = {{A}ndr\'e--{Q}uillen cohomology and rational homotopy of function spaces},
	url = {http://dx.doi.org/10.1016/j.aim.2004.04.014},
	volume = {193},
	year = {2005},
	bdsk-url-1 = {http://dx.doi.org/10.1016/j.aim.2004.04.014}}

@article{Bustamante2023,
	author = {Bustamante, Mauricio and Krannich, Manuel and Kupers, Alexander},
	date-added = {2025-12-01 12:06:12 -0500},
	date-modified = {2025-12-01 12:06:32 -0500},
	doi = {10.1007/s00208-023-02594-x},
	issn = {1432-1807},
	journal = {Mathematische Annalen},
	month = mar,
	number = {4},
	pages = {3321--3371},
	publisher = {Springer Science and Business Media LLC},
	title = {Finiteness properties of automorphism spaces of manifolds with finite fundamental group},
	url = {http://dx.doi.org/10.1007/s00208-023-02594-x},
	volume = {388},
	year = {2023},
	bdsk-url-1 = {http://dx.doi.org/10.1007/s00208-023-02594-x}}

@book{Bass1968,
	author = {Bass, Hyman},
	date-added = {2025-12-01 12:05:00 -0500},
	date-modified = {2025-12-01 12:13:23 -0500},
	mrclass = {18.20},
	mrnumber = {249491},
	mrreviewer = {A. Heller},
	pages = {xx+762},
	publisher = {W. A. Benjamin, Inc., New York-Amsterdam},
	title = {Algebraic {$K$}-theory},
	year = {1968}}

@unpublished{Lurie2014,
	author = {Jacob Lurie},
	note = {Unpublished. Available online at \url{math.ias.edu/~lurie/}},
	title = {Algebraic {K}-Theory and Manifold Topology},
	year = {2014}}

@unpublished{Lurie2017,
	author = {Jacob Lurie},
	note = {Unpublished. Available online at \url{math.ias.edu/~lurie/}},
	title = {Higher algebra},
	year = {2017}}
\bibliographystyle{mainbibstyle}

\end{document}